\newtheorem{theorem}{Theorem}[section]
\newtheorem{lemma}{Lemma}[section]
\newtheorem{remark}{Remark}[section]
\newtheorem{assumption}{Assumption}[section]
\numberwithin{equation}{section}
\newcommand{\Rd}{\mathbb{R}^d}
\newcommand{\ve}{\varepsilon}
\newcommand{\B}{\mathrm B}
\newcommand{\px}{\psi_{(\xi)}}
\newcommand{\pxt}{\psi_{(\xi_t)}}
\newcommand{\psk}{\psi_{(\sigma_k)}}
\newcommand{\pxop}{\frac{\psi_{(\xi)}}{\psi}}
\newcommand{\pxsop}{\frac{\psi_{(\xi)}^2}{\psi}}
\newcommand{\pxtsop}{\frac{\psi_{(\xi_t)}^2}{\psi}}
\newcommand{\pxsops}{\frac{\psi_{(\xi)}^2}{\psi^2}}
\newcommand{\pxtsops}{\frac{\psi_{(\xi_t)}^2}{\psi^2}}
\newcommand{\vxosbw}{\frac{|v_{(\xi)}(x)|}{\sqrt{\mathrm{B}_1(x,\xi)}}}
\newcommand{\vxosbt}{\frac{|v_{(\xi)}(x)|}{\sqrt{\mathrm{B}_2(x,\xi)}}}
\newcommand{\vzosbw}{\frac{|v_{(\zeta)}(y)|}{\sqrt{\mathrm{B}_1(y,\zeta)}}}
\newcommand{\vzosbt}{\frac{|v_{(\zeta)}(y)|}{\sqrt{\mathrm{B}_2(y,\zeta)}}}
\newcommand{\sfa}{\sup_{\alpha\in\mathfrak A}}
\begin{document}

\title[Regularity of nonlinear degenerate elliptic PDEs]{Interior regularity of fully nonlinear degenerate elliptic equations, I: Bellman equations with constant coefficients}
\author{Wei Zhou}
\address{School of Mathematics, University of Minnesota}
\email{zhoux123@math.umn.edu}


\begin{abstract}
\noindent
This is the first of a series of papers on the interior regularity of fully nonlinear degenerate elliptic equations of second order. We consider here a stochastic optimal control problem in a domain, in which the diffusion coefficients, drift coefficients and discount factor are independent of the spatial variables. Under appropriate assumptions, for $k=0,1$, when the terminal and running payoffs are globally $C^{k,1}$, we establish the interior $C^{k,1}$-smoothness of the value function, which yields the existence and uniqueness of the solution to the associated Dirichlet problem for the possibly degenerate Bellman equation with constant coefficients. Interior estimates for first and second derivatives of the solution are also obtained. The results are applicable to fully nonlinear degenerate elliptic equations in the form of $F(u_{x^ix^j}(x),x)=0$ which are invariant under the action of the orthogonal group on the Hessian matrix, including Monge-Amp\`ere equations and other Hessian equations under suitable settings, as discussed in subsequent papers.

\end{abstract}

\maketitle

\section{Introduction}

This paper is concerned with the interior $C^{1,1}$-regularity theory for the degenerate Bellman equation with constant coefficients. We are motivated  by \cite{MR1211724}, in which interior $C^{0,1}$-regularity result was obtained.

We consider the time-homogeneous stochastic optimal control problem in a domain. Given a family of controlled diffusion processes governed by It\^o stochastic equations:
\begin{equation*}
x_t^{\alpha,x}=x+\int_0^t\sigma^{\alpha_s}(x_s^{\alpha,x})dw_s+\int_0^tb^{\alpha_s}(x_s^{\alpha,x})ds,
\end{equation*}
where $w_t$ is a $d_1$-dimensional Wiener process, the associated time-homogeneous stochastic optimal control problem in a domain has the value function
\begin{equation*}
v(x)=\sfa E\bigg[g\big(x^{\alpha,x}_{\tau^{\alpha,x}}\big)e^{-\phi^{\alpha,x}_{\tau^{\alpha,x}}}+\int_0^{\tau^{\alpha,x}}f^{\alpha_t}\big(x_t^{\alpha,x}\big)e^{-\phi_t^{\alpha,x}}dt\bigg],
\end{equation*}
with
$$\phi_t^{\alpha,x}=\int_0^t c^{\alpha_s}(x_s^{\alpha,x})ds,$$
where $\mathfrak A$ is the set of policies, and for each $\alpha\in\mathfrak A$, $\tau^{\alpha,x}$ is the first exit time of $x_t^{\alpha,x}$ from the domain $D\subset\Rd$ ($d\ge2$), the nonnegative function $c^{\alpha}$ is the discount factor and the functions $f^{\alpha}$ and $g$ are the running payoff and terminal payoff respectively. The associated dynamic programming equation is the possibly degenerate Bellman equation with Dirichlet boundary condition:
\begin{equation}\tag{\bf{B}}\label{Bellmanequation}
\left\{
\begin{array}{rcll}
\displaystyle\sup_{\alpha\in A}\big[(a^\alpha)_{ij}u_{x^ix^j}+ (b^\alpha)_iu_{x^i}-c^\alpha u+f^\alpha\big]&=&0 &\text{in } D\qquad\\
u&=&g &\text{on }\partial D,
\end{array}
\right. 
\end{equation}
where $A$ is the control set, the matrix $a^\alpha=[(a^\alpha)_{ij}]_{d\times d}=(1/2)\sigma^\alpha(\sigma^\alpha)^*$  for each $\alpha\in A$, and summation convention of repeated indices is assumed.

If the value function $v$ is in the class of $\in C^{2}(D)\cap C^0(\bar D)$, then $v$ is a classical solution to (\ref{Bellmanequation}) due to Bellman principle and It\^o's formula. However, in general, $v$ is not sufficiently smooth to satisfy (\ref{Bellmanequation}). An interesting problem is establishing sufficient conditions under which $v$ has derivatives up to and including second order and uniquely solves (\ref{Bellmanequation}). Both PDE theoretic and probabilistic methods have been utilized in previous literature, see, e.g.,  \cite{MR678347, MR688919, MR992979, MR1211724, MR725360, MR765302, MR984219}. For PDE theoretic approach, the difficulties contain the degeneracy and fully nonlinearity of the elliptic equation. For probabilistic approach, the difficulties include the randomness and infiniteness of the exit time and the non-vanishing terminal payoff.

In this work, we restrict our attention to the problem in which the diffusion coefficient $\sigma^\alpha$, drift coefficient $b^\alpha$ and discount factor $c^\alpha$ are all independent of spatial variables, so that the associated Bellman equation is with constant coefficients. The main reason is that convex fully nonlinear elliptic equations in the form of
\begin{equation*}\label{FF}
F\big(u_{x^ix^j}(x), x\big)=0\\
\end{equation*}
can be rewritten as Bellman equations with constant coefficients, see \cite{MR1284912}. The other reason is that in \cite{MR3047001}, we obtained $C^{1,1}$-smoothness of $v$ and unique solvability of (\ref{Bellmanequation}) for non-constant coefficients, under the assumption of nondegeneracy of the diffusion coefficients along the normal to the boundary. Therefore, we are interested in obtaining the same smoothness results without this assumption for simpler equations. Instead, our main assumptions here are Assumptions \ref{a1} and \ref{a2}, which are general enough to make our theorems applicable to fully nonlinear degenerate elliptic equations in the form of $F(u_{x^ix^j}(x),x)=0$ which are invariant under orthogonal congruence on the Hessian matrix, including Monge-Amp\`ere equations, as studied in the sequent paper \cite{InteriorRegularityII}.

Our main results are the following: under Assumptions \ref{a1} and \ref{a2},
\begin{itemize}
\item If $f^\alpha,g\in C^{0,1}(\bar D)$, then $v\in C^{0,1}_{loc}(D)\cap C^0(\bar D)$. 
\item If $f^\alpha\in C^{0,1}(\bar D)$, $g\in C^{1,1}(\bar D)$ and $f^\alpha+K|x|^2$ is convex in $D$ for some constant $K$, then $v$ is convex after adding the function given in (\ref{e2}). 
\item If we further assume the weak nondegenacy of the diffusion term, see Remark \ref{wnc}, then $v\in C^{1,1}_{loc}(D)\cap C^{0,1}(\bar D)$, and (\ref{Bellmanequation}) is uniquely solved by $v$ in this function space.
\item Interior estimates of the first and second derivatives of $v$ are given by (\ref{e1}) and (\ref{e3}), under respective regularity assumptions on $f^\alpha$ and $g$.
\end{itemize}

Our interior $C^{0,1}$-regularity result is a non-essential generalization of the corresponding result in \cite{MR1211724}, in the sense of allowing $b^\alpha$ and $c^\alpha$ nonvanishing. (Note that we don't assume that $c^\alpha$ has a positive lower bound.) The interior $C^{1,1}$-regularity result is totally new. It is worth emphasizing that the $C^{k,1}$-regularity of the boundary data doesn't ensure the same global regularity for the solution of the Dirichlet problem in general. More precisely, if the boundary data $g$ is $C^{k,1}$ on $\partial D$, $v$ may not be $C^{k,1}$ up to the boundary, by even considering the Wiener process and the associated heat equation or Laplace's equation. Instead, the best regularity on $v$ we may expect is the interior $C^{k,1}$-regularity. In this sense, our regularity results on $v$ are optimal. We also provide interior estimates of first and second derivatives. We show that when $v\in C^{0,1}_{loc}(D)$, its $C^{0,1}$-norm doesn't blow up faster than $1/\operatorname{dist}(\cdot,\partial D)$, which is sharp due to Example 4.1.1 in \cite{MR2144644}, and when $v\in C^{1,1}_{loc}(D)$, its $C^{1,1}$-norm doesn't blow up faster than $1/\operatorname{dist}(\cdot,\partial D)^2$, whose sharpness is unknown by the author.

Unlike \cite{MR1211724}, we write down the entire paper in probabilistic terms rather than PDE terms, in order to show the ideas more intuitively and express several quantities by explicit formulas. We admit that in some circumstances, using PDE terms is more economical as far as computations and assumptions are concerned. However, we believe that the entire paper can be translated into a pure analysis of PDE paper like \cite{MR1211724}.

Our main theorems are stated in Section \ref{smt}. The online of the remaining sections concerning the proof is discussed in Section \ref{osp}.

Throughout the article, the summation convention for repeated indices is assumed, even when both repeated indices appear in the superscript. We usually put the indices in the superscript, since the subscript is for the time variable of stochastic processes. 
Given any sufficiently smooth function $u(x)$ from $\Rd$ to $\mathbb R$, for $y,z\in\Rd$, let
\begin{align*}
u_{(y)}=& u_{x^i}y^i,\qquad  u_{(y)(z)}= u_{x^i x^j}y^i z^j,\qquad u_{(y)}^2=(u_{(y)})^2.
\end{align*}
We denote the gradient vector of $u$ by $u_x$ and the Hessian matrix of $u$ by $u_{xx}$.
For any matrix $\sigma=(\sigma^{ij})$, $\sigma^*$ represents its tranpose and $\|\sigma\|^2:=\mathrm{tr}(\sigma\sigma^*).$
We also define
\[s\wedge t=\min\{s,t\},\qquad  s \vee t=\max\{s,t\}.\]
Constants appearing in inequalities are usually not indexed. They may differ even in the same chain of inequalities.


\section{Statement of main theorems}\label{smt}
Let $d$ and $d_1$ be integers and $A$ be a separable metric space. Assume that the following continuous and bounded functions on $A$ are given:
\begin{itemize}
\item $d\times d_1$ matrix-valued function $\sigma^\alpha=(\sigma^\alpha_1,...,\sigma^\alpha_{d_1})$,
\item $\Rd$-valued function $b^\alpha$,
\item real-valued non-negative function $c^\alpha$.
\end{itemize}
Let $(\Omega,\mathcal{F},P)$ be a complete probability space, $\{\mathcal{F}_t;t \ge 0\}$ be an increasing filtration of $\sigma $-algebras $\mathcal{F}_t \subset \mathcal{F}$ which are complete with respect to $(\mathcal{F},P)$, and $(w_t ,\mathcal{F}_t ;t \ge 0)$ be a $d_1$-dimensional Wiener process on $(\Omega,\mathcal{F},P)$. Denote by $\mathfrak{A}$  the set of progressively measurable $A$-valued processes $\alpha_t=\alpha_t(\omega)$. 

Let $D$ be a $C^{3}$ bounded domain in $\Rd$ described by a $C^{3}$ real-valued function $\psi$ which is non-singular on $\partial D$, i.e.
\begin{equation}\label{domain}
D:=\{x\in\Rd:\psi(x)>0\},\qquad|\psi_x|\ge1\mbox{ on }\partial D.
\end{equation}
For the sake of simplicity in the statement of the results and their proofs, we suppose that
$$|\psi|_{3,D},\|\sigma\|_{0,A},|b|_{0,A}, |c|_{0,A}\le K_0,$$
where $K_0\in[1,\infty)$ is constant.

In the domain $D$, a real-valued function $g(x)$ is given, which is bounded and Borel measurable. On the set $A\times D$, a real-valued function $f^\alpha(x)$ is defined, which is bounded and Borel measurable in $A\times D$.

Now we consider the stochastic optimal control of degenerate diffusion processes in which $D$ is the domain, $A$ is the control set, $\mathfrak A$ is the set of policies, $\sigma^\alpha$, $b^\alpha$, $c^\alpha$ are diffusion, drift and discount coefficients, and $f^\alpha(x)$, $g(x)$ are running payoff and terminal payoff, respectively. To be precise, for each $\alpha_t\in\mathfrak{A}$ and $x\in D$, the degenerate diffusion process is given by 
\begin{equation}\label{itox}
x_t^{\alpha,x}=x+\int_0^t\sigma^{\alpha_s}dw_s+\int_0^tb^{\alpha_s}ds.
\end{equation}
The value function of the stochastic optimal control is known as 
\begin{equation}
v(x)=\sup_{\alpha\in\mathfrak{A}}E\bigg[g\big(x^{\alpha,x}_{\tau^{\alpha,x}}\big)e^{-\phi^{\alpha}_{\tau^{\alpha,x}}}+\int_0^{\tau^{\alpha,x}}f^{\alpha_s}\big(x_s^{\alpha,x}\big)e^{-\phi^{\alpha}_s}ds\bigg],\label{v}
\end{equation}
where for each $\alpha\in \mathfrak A$ and $t\ge 0$,
\begin{equation}\label{phidiscount}
\phi_t^{\alpha}:=\int_0^tc^{\alpha_s}ds,
\end{equation}
and for each $\alpha\in\mathfrak A$ and $x\in D$, $\tau^{\alpha,x}$ is the first exit time of $x_t^{\alpha,x}$ from $D$, namely, $\tau^{\alpha,x}:=\inf\{t\ge0:x_t^{\alpha,x}\notin D\}.$

From now on, we use the common abbreviated notation, according to which we put the superscripts $\alpha$ and $x$ beside the expectation sign instead of explicitly exhibiting them inside the expectation sign for every object that can carry all or part of them. Namely, 
\begin{eqnarray*}
\lefteqn{E^\alpha_x\bigg[g\big(x_\tau\big)e^{-\phi_{\tau}}+\int_0^{\tau}f^{\alpha_s}\big(x_s\big)e^{-\phi_s}ds\bigg]}\\
&:\displaystyle=E\bigg[g\big(x^{\alpha,x}_{\tau^{\alpha,x}}\big)e^{-\phi^{\alpha}_{\tau^{\alpha,x}}}+\int_0^{\tau^{\alpha,x}}f^{\alpha_s}\big(x_s^{\alpha,x}\big)e^{-\phi^{\alpha}_s}ds\bigg].
\end{eqnarray*}

We also denote by $\mathbb S^d$ (resp. $\mathbb O^d$) the set of $d\times d$ symmetric (resp. orthogonal) matrices and introduce
\begin{equation}\label{condition}
\mu(\xi)=\inf_{\zeta: (\xi,\zeta)=1}\sup_{\alpha\in A}(a^{\alpha})_{ij}\zeta^i\zeta^j,  \mbox{ with } a^\alpha=(1/2)\sigma^\alpha(\sigma^\alpha)^*,
\end{equation}
\begin{equation}\label{conditions}
\mu=\inf_{|\xi|=1}\mu(\xi).
\end{equation}

Our assumptions and theorems are the following:

\begin{assumption}\label{a1} For each $x\in D$, we have 
\begin{equation}
\displaystyle\sup_{\alpha\in A}L^\alpha\psi(x)\le-1,
\mbox{ where }
\displaystyle L^\alpha =(a^\alpha)_{ij} \frac{\partial^2}{\partial x^i\partial x^j}+(b^\alpha)_i \frac{\partial}{\partial x^i}.
\end{equation}

\end{assumption}

\begin{assumption}\label{a2}
For each $q\in\mathbb O^d$ and $(\gamma,p,z,x)\in\mathbb S^d\times \Rd\times\mathbb R\times D$,
\begin{equation}
\begin{gathered}
\sup_{q\in\mathbb O^d}\sup_{\alpha\in  A}\big[(qa^\alpha q^*)_{ij}\gamma^{ij}+(b^\alpha)_ip^i-c^\alpha z+f^\alpha(x)\big]\\
=\sup_{\alpha\in A}\big[(a^\alpha)_{ij}\gamma^{ij}+(b^\alpha)_ip^i-c^\alpha z+f^\alpha(x)\big].
\end{gathered}
\end{equation}
\end{assumption}

\begin{remark}
For example, if the set $\mathbb{A}:=\{a^\alpha: \alpha\in A\}$ is $\mathbb O^d$-invariant, i.e., for any orthogonal matrix $q\in\mathbb O^d$, $q\mathbb A q^*=\mathbb A$, and the following conditions
$$b^\alpha=\tilde b(a^\alpha)=\tilde b(qa^\alpha q^*), \qquad c^\alpha=\tilde c(a^\alpha)=\tilde c(qa^\alpha q^*),$$
$$f^\alpha(x)=\tilde f(a^\alpha,x)=\tilde f(qa^\alpha q^*,x)$$
hold for each $(q,\alpha)\in \mathbb O^d\times A$, then 
$$\sup_{\alpha\in  A}\big[(qa^\alpha q^*)_{ij}\gamma^{ij}+(b^\alpha)_ip^i-c^\alpha z+f^\alpha(x)\big]
=\sup_{\alpha\in A}\big[(a^\alpha)_{ij}\gamma^{ij}+(b^\alpha)_ip^i-c^\alpha z+f^\alpha(x)\big],$$
for each $q\in\mathbb O^d$, and consequently Assumption \ref{a2} holds.

\end{remark}

\begin{theorem}\label{t1}
Under Assumption \ref{a1}, the value function $v$ given by (\ref{v}) is well-defined, and we have
\begin{equation}
|v(x)|\le|g|_{0,\partial D}+\psi(x)\sup_{\alpha\in A}|f^\alpha|_{0,D},\ \forall x\in D.
\end{equation}
\end{theorem}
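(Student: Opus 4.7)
The plan is to use Assumption \ref{a1} as a Lyapunov-type bound on the defining function $\psi$ that simultaneously forces the exit time $\tau^{\alpha,x}$ to be finite in expectation and yields $E\tau^{\alpha,x}\le\psi(x)$; the theorem then follows by direct estimation of the two terms inside the expectation in (\ref{v}). Throughout I would fix $\alpha\in\mathfrak A$ and $x\in D$ and abbreviate $x_t,\tau$ for $x_t^{\alpha,x},\tau^{\alpha,x}$.

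\textbf{Step 1 (exit-time estimate).} Apply It\^o's formula to $\psi(x_t)$, using $\psi\in C^3(\bar D)$ and the boundedness of $\sigma^\alpha,b^\alpha$, to obtain
\[
d\psi(x_t)=L^{\alpha_t}\psi(x_t)\,dt+\psi_x(x_t)\sigma^{\alpha_t}\,dw_t.
\]
On $\{t<\tau\}$, Assumption \ref{a1} yields $L^{\alpha_t}\psi(x_t)\le-1$. Stopping at $\tau\wedge n$, the stochastic integral has bounded integrand on $[0,n]$ and is therefore a true martingale, so taking expectations and using $\psi\ge 0$ on $\bar D$ (which follows from (\ref{domain}) since $D=\{\psi>0\}$ forces $\psi=0$ on $\partial D$) gives
\[
E(\tau\wedge n)\le\psi(x)-E\psi(x_{\tau\wedge n})\le\psi(x).
\]
Monotone convergence then yields $E\tau\le\psi(x)<\infty$, so $\tau$ is finite a.s. and, by path continuity, $x_\tau\in\partial D$ a.s.

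\textbf{Step 2 (bounding $v$).} I would bound the terminal term in (\ref{v}) by $|g|_{0,\partial D}$, using $|g(x_\tau)|\le|g|_{0,\partial D}$ together with $0\le e^{-\phi_\tau}\le 1$ (the latter from $c^\alpha\ge 0$). For the running-payoff term the integrand satisfies $|f^{\alpha_s}(x_s)e^{-\phi_s}|\le\sup_{\alpha\in A}|f^\alpha|_{0,D}$ pointwise, so the expected integral over $[0,\tau]$ is at most $\sup_{\alpha\in A}|f^\alpha|_{0,D}\cdot E\tau\le\sup_{\alpha\in A}|f^\alpha|_{0,D}\cdot\psi(x)$ by Step 1. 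Taking the supremum over $\alpha\in\mathfrak A$ gives both the well-definedness of $v$ and the claimed inequality.

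The argument presents no genuine obstacle; the only points requiring care are the martingale property of the stopped stochastic integral in Step 1 (immediate from boundedness of $\psi_x$ on $\bar D$ and of $\sigma^\alpha$ on $A$) and the nonnegativity of $\psi$ on $\bar D$. I note that the sharper discounted bound $E\int_0^\tau e^{-\phi_s}\,ds\le\psi(x)$, obtained by running the same stopping argument on $\psi(x_t)e^{-\phi_t}$ and using $c^{\alpha_t}\psi(x_t)\ge 0$, is not needed here but is likely to be used in later, more refined sections.
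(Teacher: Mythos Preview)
Your proposal is correct and essentially identical to the paper's approach: the paper isolates your Step~1 as Lemma~\ref{taun} (stated for all moments $E\tau^n$, but only the $n=1$ case $E\tau\le\psi(x)$ is used here) and then carries out exactly your Step~2 in a single line. Your write-up is in fact more careful than the paper's about the localization and the martingale property of the stochastic integral.
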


\begin{theorem}\label{t2}
Under Assumptions \ref{a1} and \ref{a2}, if $f^\alpha\in C^{0,1}(\bar D)$, $g\in C^{0,1}(\partial D)$, and
$$\sup_{\alpha\in A}|f^\alpha|_{0,1,D}, |g|_{0,1, \partial D}\le K_0,$$
then $v\in C^{0,1}_{loc}(D)\cap C(\bar D)$, and for a.e. $x\in D$, we have
\begin{equation}\label{e1}
\big|v_{(\xi)}\big|\le N\bigg(|\xi|+\frac{|\psi_{(\xi)}|}{\psi^{1/2}}\bigg),\ \forall\xi\in\Rd,
\end{equation}
where $N=N(K_0, d, d_1, D)$ is constant.
\end{theorem}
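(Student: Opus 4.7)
The plan is to control the difference $v(x+h\xi) - v(x)$ for small $h>0$ via Bellman's dynamic programming principle combined with a synchronous coupling, and then extract (\ref{e1}) and $v\in C^{0,1}_{loc}(D)\cap C(\bar D)$ by standard difference-quotient and Rademacher-type arguments. As an auxiliary ingredient I would first establish the boundary H\"older-$1/2$ estimate $|v(y)-g(y)|\le C\sqrt{\psi(y)}$ for $y\in\bar D$, which follows from Lipschitzness of $g$, the second-moment bound $E|x^{\alpha,y}_\tau-y|^2\le C\,E\tau^{\alpha,y}$, and $E\tau^{\alpha,y}\le\psi(y)$ (from Dynkin's formula applied to Assumption~\ref{a1}).

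Since $\sigma^\alpha,b^\alpha,c^\alpha$ do not depend on $x$, the coupling $x^{\alpha,x+h\xi}_t = x^{\alpha,x}_t + h\xi$ is automatic. Writing $\tau = \tau^{\alpha,x}$, $\bar\tau = \tau^{\alpha,x+h\xi}$, $\theta = \tau\wedge\bar\tau$, and $\bar x_s = x^{\alpha,x}_s+h\xi$, Bellman's principle applied at $\theta$ to both value functions, combined with $\sup_\alpha A^\alpha - \sup_\alpha B^\alpha\le\sup_\alpha(A^\alpha-B^\alpha)$, yields
\begin{equation*}
v(x+h\xi)-v(x)\le\sup_\alpha E\Bigl[(v(\bar x_\theta)-v(x_\theta))e^{-\phi_\theta}+\int_0^\theta(f^{\alpha_s}(\bar x_s)-f^{\alpha_s}(x_s))e^{-\phi_s}ds\Bigr].
\end{equation*}
The running-payoff integral is controlled by $K_0 h|\xi|\psi(x)$ using Lipschitzness of $f^\alpha$ and $E\theta\le\psi(x)$. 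At time $\theta$ one of $x_\theta,\bar x_\theta$ lies on $\partial D$; by Taylor expansion the off-boundary point satisfies $\psi\le Ch|\psi_{(\xi)}|+Ch^2|\xi|^2$, and combining the boundary H\"older estimate with Lipschitzness of $g$ bounds $|v(\bar x_\theta)-v(x_\theta)|$ in terms of $\sqrt{h|\psi_{(\xi)}|}+h|\xi|$.

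To convert this into the sharp bound $Nh(|\xi|+|\psi_{(\xi)}(x)|/\sqrt{\psi(x)})$, I expect to invoke a quasi-derivative correction in the style of Krylov: replace the constant shift $h\xi$ by a time-dependent $h\eta_t$ with $\eta_0=\xi$ and rotate the controls through $\sigma^{\bar\alpha_t}=O_t\sigma^{\alpha_t}$ for some orthogonal $O_t$, which is admissible by Assumption~\ref{a2}. Choosing $\eta_t$ and $O_t$ so that $\eta_\theta\cdot\nabla\psi(x_\theta)=0$ cancels the first-order exit-time discrepancy, and the integrated cost of maintaining the rotation should have size exactly $h(|\xi|+|\psi_{(\xi)}(x)|/\sqrt{\psi(x)})$, giving precisely (\ref{e1}). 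The main obstacle will be carrying out this construction rigorously: measurable choice of $O_t$, preservation of admissibility through Assumption~\ref{a2}, and verification that the rotation correction contributes the claimed $1/\sqrt{\psi}$ blow-up through the ODE governing $\eta_t$.
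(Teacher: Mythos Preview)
Your high-level strategy---use Bellman's principle and a quasiderivative perturbation that exploits Assumption~\ref{a2} to make the first variation tangential at exit---is exactly the route the paper takes. However, the implementation you sketch has two substantive gaps.

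First, rotation alone cannot force $\eta_\theta\cdot\nabla\psi(x_\theta)=0$. If you differentiate $\psi_{(\eta_t)}(x_t)$ along the process, the martingale part contains $\psi_{(\eta_t)(\sigma_k)}+\psi_{(P_t\sigma_k)}$, and since $P_t$ is skew-symmetric the vector $P_t\psi_x$ is orthogonal to $\psi_x$; it can only cancel the tangential component of $\psi_{xx}\eta_t$, not the normal one. The paper therefore combines the rotation $P_t$ with a \emph{random time change} governed by an auxiliary process $r_t=\rho(x_t,\xi_t)+\psi_{(\xi_t)}/\psi$ (the $\rho$ part absorbs the normal component of $\psi_{xx}\xi_t$) and a \emph{Girsanov change of measure} via a process $\pi_t$. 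All three mechanisms are needed simultaneously; the resulting first quasiderivative $\xi_t$ satisfies an SDE, not an ODE.

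Second, the claimed rate---that the cost of the correction is exactly $|\xi|+|\psi_{(\xi)}|/\sqrt{\psi}$---does not fall out of the equation for $\eta_t$ directly. In the paper this is the content of a delicate barrier construction: one builds a function $\mathrm{B}_1(x,\xi)\sim|\xi|^2+\psi_{(\xi)}^2/\psi$ near $\partial D$ (and a second barrier $\mathrm{B}_2$ in the interior) and checks by It\^o's formula that $\mathrm{B}_1(x_t,\xi_t)$ is a local supermartingale along the coupled process. This supermartingale property is what pins down the $1/\sqrt{\psi}$ blow-up and simultaneously controls all the moments of $\xi_t$, $\tilde\xi_t$ (the Girsanov integrand), and the exit-time discrepancy. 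Your proposal does not indicate how you would obtain such moment bounds uniformly in $\alpha$ and up to the random exit time, and without them the passage from the perturbed Bellman representation to (\ref{e1}) does not close. The synchronous-coupling first step you wrote down gives only an $O(\sqrt{h})$ difference at the boundary, as you note, and the mechanism that upgrades this to $O(h)$ is precisely the barrier argument, not an ODE estimate.
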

\begin{theorem}\label{t3}
Under Assumptions \ref{a1} and \ref{a2}, if $f^\alpha\in C^{0,1}(\bar{D})$, $g\in C^{1,1}(\partial {D})$,
$$\sup_{\alpha\in A}|f^\alpha|_{0,1,D}, |g|_{1,1,D}\le K_0,$$
and for each $\alpha\in A$, $f^\alpha+K_0|x|^2$ is convex , then for each constant $\kappa>0$, the function
\begin{equation}\label{e2}
v+N\Big[|x|^2+\psi\Big(\log\frac{\psi}{\kappa}-1\Big)\Big]
\end{equation}
is convex in the set $\{x\in D:\psi(x)\le\kappa\}$,
where $N=N(K_0, d, d_1, D)$ is constant.

\allowdisplaybreaks
If we additionally assume that $\mu>0$, then $v\in C^{1,1}_{loc}(D)\cap C^{0,1}(\bar D)$, and for a.e. $x\in D$, we have
\begin{equation}\label{e3}
-N\bigg(|\xi|^2+\frac{\psi_{(\xi)}^2}{\psi}\bigg)\le v_{(\xi)(\xi)}\le \mu(\xi/|\xi|)^{-1}N\frac{|\xi|^2}{\psi},\ \forall\xi\in\Rd,
\end{equation}
where $N=N(K_0, d, d_1, D)$ is constant. Furthermore, $v$ is the unique solution in $C^{1,1}_{loc}(D)\cap C^{0,1}(\bar D)$ of the Dirichlet problem
\begin{equation}
\left\{
\begin{array}{rcll}
\displaystyle\sup_{\alpha\in A}\big[L^\alpha v-c^\alpha v+f^\alpha\big]&=&0 &\text{a.e. in } D\\
v&=&g &\text{on }\partial D.
\end{array}
\right.  \label{bellmanae}
\end{equation}
\end{theorem}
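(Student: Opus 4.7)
\medskip

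\noindent\textbf{Proof sketch.} The plan is to attack the three assertions in order: semi-convexity, the two-sided Hessian estimate, and uniqueness.

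\emph{Semi-convexity.} The main observation is that, because $\sigma^\alpha$, $b^\alpha$ and $c^\alpha$ do not depend on $x$, the shifted trajectory satisfies $x_t^{\alpha,x+\xi}=x_t^{\alpha,x}+\xi$, and $\phi_t^\alpha$ is independent of the starting point. Thus, if I write $w(x):=v(x)+N[|x|^2+\psi(\log(\psi/\kappa)-1)]$, the second-order difference $w(x+\xi)+w(x-\xi)-2w(x)$ can be represented, up to using $\varepsilon$-optimal controls at $x$, as an expectation in which the running-cost contribution carries $f^{\alpha_s}(x_s+\xi)+f^{\alpha_s}(x_s-\xi)-2f^{\alpha_s}(x_s)+2K_0|\xi|^2$ (nonnegative by the convexity hypothesis on $f^\alpha+K_0|x|^2$), the terminal contribution carries the second-order difference of $g$ (bounded by $|g|_{1,1}|\xi|^2$), and there is a boundary/time-mismatch contribution due to the three exit times $\tau^{\alpha,x\pm\xi}$ and $\tau^{\alpha,x}$ being distinct. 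Everything else from the quadratic $|x|^2$ and the auxiliary $h(x):=\psi(\log(\psi/\kappa)-1)$ piece should be handled directly.

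The key is the choice of $h$. A direct computation gives $h_{x^i}=\psi_{x^i}\log(\psi/\kappa)$ and $h_{(\xi)(\xi)}=\psi_{(\xi)(\xi)}\log(\psi/\kappa)+\psi_{(\xi)}^2/\psi$, so on $\{\psi\le\kappa\}$ the Hessian of $h$ in direction $\xi$ is bounded below by $\psi_{(\xi)}^2/\psi-C|\xi|^2|\log(\psi/\kappa)|$, which produces exactly the boundary-blow-up singularity $\psi_{(\xi)}^2/\psi$ needed to absorb the exit-time mismatch. Using Assumption \ref{a1} in the form $L^\alpha\psi\le-1$, I would apply It\^o's formula to $e^{-\phi_t}h(x_t)$ along each of the three trajectories and reconcile the stopping times; the uniform growth $\sup_\alpha L^\alpha h\le C(1+\psi_{(\xi)}^2/\psi)$-type bound furnished by Assumption \ref{a1} is what ultimately closes the estimate. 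This mismatch analysis is the main technical obstacle, since the stopping times for the three trajectories are neither ordered nor simultaneously controlled.

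\emph{Two-sided Hessian estimate.} The lower bound in (\ref{e3}) is an immediate consequence of the semi-convexity of $w$ on $\{\psi\le\kappa\}$ for every $\kappa$: differentiating twice in $\xi$ and using the expressions above for $h_{(\xi)(\xi)}$ yields
\[
v_{(\xi)(\xi)}\ge -N\Big(|\xi|^2+\frac{\psi_{(\xi)}^2}{\psi}\Big).
\]
For the upper bound I exploit the Bellman equation together with $\mu>0$. Since $v\in C^{1,1}_{loc}$ now (semi-convexity plus a Lipschitz gradient bound derived from Theorem \ref{t2} and the lower Hessian bound), $v_{xx}$ exists a.e., and the sup in (\ref{bellmanae}) gives $(a^\alpha)_{ij}v_{x^ix^j}+(b^\alpha)_iv_{x^i}-c^\alpha v+f^\alpha\le 0$ for every $\alpha$. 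Fix a unit $\zeta$ and pick $\alpha=\alpha_\zeta$ with $(a^\alpha)_{ij}\zeta^i\zeta^j\ge\mu(\zeta)/2$; by Assumption \ref{a2} such an $\alpha$ can moreover be arranged (through an orthogonal change of basis) to make $a^\alpha$ align favorably with the direction $\zeta$. Decomposing $\operatorname{tr}(a^\alpha v_{xx})$ into the $\zeta\zeta^\ast$ component and the orthogonal complement, using the lower Hessian bound on the complement, and rearranging produces an upper bound on $v_{(\zeta)(\zeta)}$ of order $\mu(\zeta)^{-1}|\zeta|^2/\psi$, which is (\ref{e3}).

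\emph{Uniqueness.} Suppose $u\in C^{1,1}_{loc}(D)\cap C^{0,1}(\bar D)$ solves (\ref{bellmanae}) with $u=g$ on $\partial D$. For any $\alpha\in\mathfrak A$, since $u\in W^{2,\infty}_{loc}$, It\^o's formula applies to $e^{-\phi_t}u(x_t^{\alpha,x})$ on $[0,\tau_\delta]$, where $\tau_\delta$ is the first exit time from $\{\psi>\delta\}$. The inequality $L^\alpha u-c^\alpha u+f^\alpha\le 0$ a.e.\ (valid for every $\alpha$ because the sup is zero) gives $u(x)\ge E^\alpha_x[u(x_{\tau_\delta})e^{-\phi_{\tau_\delta}}+\int_0^{\tau_\delta}f^{\alpha_s}(x_s)e^{-\phi_s}ds]$; sending $\delta\to 0$ by continuity of $u$ on $\bar D$ and $\tau^{\alpha,x}<\infty$ a.s.\ (a consequence of Assumption \ref{a1}, which forces every trajectory out of $D$ in finite expected time) yields $u\ge v$ after taking $\sup_\alpha$. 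The reverse inequality comes from a measurable selection of $\varepsilon$-optimal controls in the Bellman sup, applied to the same It\^o identity, together with Theorem \ref{t1}-type $L^\infty$ bounds for uniform integrability in the $\delta\to 0$ limit. I expect the convexity step and the correct packaging of the boundary-blow-up term $\psi_{(\xi)}^2/\psi$ to be the principal difficulty; the upper Hessian estimate and uniqueness are then reasonably direct consequences of the Bellman structure and the weak nondegeneracy $\mu>0$.
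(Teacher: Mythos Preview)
Your approach diverges substantially from the paper's, and the step you flag as the principal difficulty is a genuine gap. The paper does \emph{not} exploit the translation $x_t^{\alpha,x+\xi}=x_t^{\alpha,x}+\xi$ for the lower Hessian bound. Instead it constructs \emph{quasiderivative} processes $\xi_t,\eta_t$ by differentiating a perturbed diffusion $z_t(\epsilon)$ that combines a random time change (parameter $r_t$), a Girsanov change of measure ($\pi_t$), and an orthogonal rotation of $\sigma^\alpha$ (skew-symmetric $P_t$; this is where Assumption~\ref{a2} enters). These parameters are chosen so that explicit barriers $\mathrm{B}_1(x,\xi),\mathrm{B}_2(x,\xi)$ become local supermartingales along $(x_t,\xi_t)$, which forces $\psi_{(\xi_t)}(x_t)\to 0$ at the exit time; the boundary term then involves only \emph{tangential} second derivatives of $g$. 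Thus the stopping-time mismatch is \emph{eliminated}, not absorbed by a barrier. The function $\psi(\log(\psi/\kappa)-1)$ plays no role in proving the lower bound: the convexity of (\ref{e2}) is derived \emph{after} the first inequality in (\ref{e3}) by a two-line Hessian computation, not before.

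In your scheme, at $\gamma=\tau^{\alpha,x}\wedge\tau^{\alpha,x+\xi}\wedge\tau^{\alpha,x-\xi}$ one of the three points lies on $\partial D$ while the other two are interior, so the terminal contribution is a mixture $g(\cdot)+v(\cdot)-2v(\cdot)$ with no a~priori sign; it is not clear how the singularity $\psi_{(\xi)}^2/\psi$ sitting in the \emph{Hessian} of $h$ could dominate this zeroth-order defect in $v$. Your semi-convexity argument also never invokes Assumption~\ref{a2}, whereas the paper's construction uses it essentially via the rotation $P_t$. Two smaller points: your upper-bound step uses the Bellman equation pointwise, presupposing $v\in W^{2,\infty}_{loc}$---the paper breaks this circularity by first reducing (Section~\ref{sub2}) to a priori estimates for $C^{2,\beta}$ solutions $v^\epsilon$ of a nondegenerate approximation; and the paper's linear algebra for the upper bound sets $V=v_{xx}+N\psi^{-1}I+I>0$ and uses $\operatorname{tr}(a^\alpha V)\ge (V\xi,\xi)\,\mu(\xi)$ via the substitution $\zeta=\sqrt{V}\,|\sqrt{V}\xi|^{-2}\sqrt{V}\xi$, with no block decomposition and no appeal to Assumption~\ref{a2}.
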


\begin{remark}\label{wnc}
The condition $\mu(\xi)>0$ means that the term $v_{(\xi)(\xi)}$ essentially appear in the Bellman equation in (\ref{bellmanae}). 
It is also not hard to see that
$$\mu=\inf_{|\zeta|=1}\sup_{\alpha\in A}(a^\alpha)_{ij}\zeta^i\zeta^j.$$
Note that the condition $\mu>0$ is called ``weak nondegeneracy condition" in some previous literature,  which holds if and only if for any $\zeta\ne0$, there exists an element in the control set $A$, such that the corresponding diffusion term $a^\alpha$ is nondegenerate in the direction of $\zeta$.
\end{remark}

\begin{remark}
The first derivative estimate (\ref{e1}) is sharp due to Example 4.1.1 in \cite{MR2144644}. The author doesn't know whether the second derivative estimate (\ref{e3}) is sharp.
\end{remark}

\section{Outline and Strategy of the proof of Theorems {\ref{t1}}-{\ref{t3}}}\label{osp}
In Section \ref{sub1} we use Assumption \ref{a1} to prove Theorem \ref{t1}. 

To prove Theorems \ref{t2} and \ref{t3}, we first reduce the original problem of showing the existence of generalized derivatives to a priori estimate of the derivatives, which is explained in Section \ref{sub2}.

To estimate the derivatives, we differentiate both sides of the probabilistic representation (\ref{v}). The main difficulty comes from the non-vanishing terminal payoff and the random  unbounded exit time of the diffusion processes. Thus for simplicity in discussing the strategy we temporarily let $c^\alpha=f^\alpha=0$. Heuristically, utilizing Bellman principle and then differentiating $v$ in the direction of $\xi$, we wish to have
\begin{align}
v_{(\xi)}(x)&\le\sfa E_x^\alpha\big[v_{(\xi_\tau)}(x_\tau)\big],\label{firstder}\\
v_{(\xi)(\xi)}(x)&\ge\sfa E_x^\alpha\big[v_{(\xi_\tau)(\xi_\tau)}(x_\tau)+v_{(\eta_\tau)}(x_\tau)\big],\label{secondder}
\end{align}
where $\xi_t^{\alpha,\xi}$ and $\eta_t^{\alpha,\eta}$ should be the first and second derivatives of the state process $x_t^{\alpha,x}$ with respect to its initial position in some sense. For this reason, in Section \ref{sub3}, we introduce the quasiderivatives which are more general than the traditional derivatives of stochastic processes and can somehow fit in the expectations on the right-hand side of (\ref{firstder}) and (\ref{secondder}). 

We hope that $\xi_\tau^{\alpha,\xi}$ is tangent to the boundary, so that we can replace $v$ in the leading term in the expectations in (\ref{firstder}) and (\ref{secondder}) with $g$. Therefore, in Section \ref{sub4}, we seek such quasiderivatives by choosing appropriate parameters in their expressions. Note that since the diffusion processes are random, we have no way to figure out when or where they will exit the domain. Thus it is not an easy task to make the quasiderivatives always tangent to the boundary when the diffusion processes exit the domain. With the help of two nonnegative local supermartingales, we are able to show that our first quasiderivatives are tangent to the boundary when the diffusion processes exit the domain almost surely. 

Gathering these auxiliary tools and results, we prove Theorem \ref{t2} and Theorem \ref{t3} in Sections \ref{sub5} and \ref{sub6}, respectively. More precisely,  after establishing (\ref{firstder}), by replacing $\xi$ with $-\xi$, we obtain the first derivative estimate. As far as the second derivatives are concerned, we notice that
$$4v_{(\xi)(\eta)}=v_{(\xi+\eta)(\xi+\eta)}-v_{(\xi-\eta)(\xi-\eta)},$$
so it suffices to estimate $v_{(\xi)(\xi)}$. From (\ref{secondder}) we can just get the second derivative estimate from below. To obtain the second derivative estimate from above, we make use of the associated Bellman equation under the assumption of weak nondegenercy. The existence result is known, and the uniqueness result is a corollary of a theorem in time-inhomogeneous case.


\section{Proof of Theorem {\ref{t1}}}\label{sub1}
Theorem \ref{t1} is a direction conclusion from the following lemma, which says that the moments of the exit times are uniformly bounded under Assumption \ref{a1}.

\begin{lemma}\label{taun}
If Assumption \ref{a1} holds, then for any $x\in D$,
\begin{align}
\sfa E^\alpha_x\tau^n\le n!|\psi|^{n-1}_{0,D}\psi(x), \ \forall n\in\mathbb N.
\end{align}
\end{lemma}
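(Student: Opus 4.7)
The plan is to argue by induction on $n$. For the base case $n=1$, I apply It\^o's formula to $\psi(x_t^{\alpha,x})$ on the pre-localized interval $[0, t \wedge \tau \wedge N]$; since $\psi\in C^3(\bar D)$ and $\sigma^\alpha, b^\alpha$ are bounded, the stochastic-integral part is a true martingale with zero expectation, yielding
\[
E\psi(x_{t \wedge \tau \wedge N}) = \psi(x) + E\!\int_0^{t\wedge\tau\wedge N}\!L^{\alpha_s}\psi(x_s)\,ds.
\]
By Assumption \ref{a1} the integrand is at most $-1$, and $\psi(x_{t\wedge\tau\wedge N}) \ge 0$ since $x_t\in \bar D$ for $t\le\tau$. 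Hence $E(t\wedge\tau\wedge N)\le \psi(x)$, and monotone convergence as $N,t\to\infty$ gives $\sup_{\alpha} E^\alpha_x \tau \le \psi(x)$, which is the desired bound for $n=1$.

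For the inductive step, I assume the bound for some $n\ge 1$ (uniformly in $x\in D$ and in the policy) and derive it for $n+1$ via the elementary identity $\tau^{n+1}=(n+1)\int_0^\tau (\tau-s)^n\,ds$. The crucial structural fact is the translation invariance of the dynamics: because $\sigma^\alpha, b^\alpha$ are independent of $x$, on $\{s<\tau\}$ and conditioned on $\mathcal F_s$ the shifted process $(x_{s+r}^{\alpha,x})_{r\ge0}$ is a controlled diffusion starting at the random point $x_s$, driven by the shifted policy $\tilde\alpha_r:=\alpha_{s+r}\in\mathfrak A$ and the $\mathcal F_s$-independent Wiener process $\tilde w_r:=w_{s+r}-w_s$, with exit time from $D$ equal to $\tau-s$. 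Applying the inductive hypothesis pathwise in the random initial point $x_s$ gives
\[
E\big[(\tau-s)^n\mathbf 1_{s<\tau}\,\big|\,\mathcal F_s\big]\le n!\,|\psi|^{n-1}_{0,D}\,\psi(x_s)\,\mathbf 1_{s<\tau},
\]
and Fubini then yields $E^\alpha_x\tau^{n+1}\le (n+1)!\,|\psi|^{n-1}_{0,D}\,E^\alpha_x\int_0^\tau\psi(x_s)\,ds$.

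It remains to bound $E^\alpha_x\int_0^\tau\psi(x_s)\,ds$ by $|\psi|_{0,D}\,\psi(x)$, which I obtain by a second It\^o computation on the test function $|\psi|_{0,D}\,\psi(x_t^{\alpha,x})$: its generator $|\psi|_{0,D}\,L^\alpha\psi$ is at most $-|\psi|_{0,D}\le -\psi(x_s)$ by Assumption \ref{a1} together with the nonnegativity of $\psi$ on $\bar D$. Localization, discarding the nonnegative terminal value, and monotone convergence deliver the required integral estimate; substituting then produces the constant $(n+1)!\,|\psi|^{n}_{0,D}\,\psi(x)$ and closes the induction. The only real subtlety I anticipate is justifying the conditioning step in the inductive argument, namely verifying that the shifted policy $\tilde\alpha$ is again progressively measurable (hence lies in $\mathfrak A$) and that the pointwise inductive bound can be transferred to a conditional one holding $P$-a.s.\ uniformly in the original $\alpha$. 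Once this is in place, the remaining work is routine localization with monotone convergence, and the degeneracy poses no obstacle since Assumption \ref{a1} alone drives both It\^o computations.
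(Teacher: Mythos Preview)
Your proof is correct and follows essentially the same route as the paper: induction on $n$, the base case via It\^o's formula applied to $\psi$, and the inductive step via the identity $\tau^{n+1}=(n+1)\int_0^\tau(\tau-s)^n\,ds$ together with the Markov/translation-invariance property of the constant-coefficient dynamics. The only cosmetic difference is that the paper bounds $E^\alpha_x\int_0^\tau\psi(x_s)\,ds$ more directly by $|\psi|_{0,D}\,E^\alpha_x\tau\le|\psi|_{0,D}\,\psi(x)$ rather than through a second It\^o computation, but this is the same estimate.
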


\begin{proof}
It suffices to prove the inequality for each $\alpha\in\mathfrak A$ and notice that
\allowdisplaybreaks\begin{align*}
E^\alpha_x\tau\le& -E^\alpha_x\int_0^{\tau}L\psi dt=\psi(x)-E^\alpha_x\psi(x_{\tau})=\psi(x),\\
E^\alpha_x\tau^n=&nE_x^\alpha\int_0^\infty(\tau-t)^{n-1}\mathbbm 1_{\tau>t}dt=nE\int_0^\infty \mathbbm 1_{\tau>t}E(\tau^{\alpha,x_t})^{n-1}dt\\
\le& n\sup_{y\in D}E^\alpha_y\tau^{n-1}\cdot E^\alpha_x\tau\le n\sup_{y\in D}E^\alpha_y\tau^{n-1}\cdot\psi(x).
\end{align*}
\end{proof}

\begin{proof}[Proof of Theorem \ref{a1}] Notice that
$$|v(x)|\le|g|_{0,\partial D}+\sup_{\alpha\in A}|f^\alpha|_{0,D}\sfa E_x^\alpha\tau\le |g|_{0,\partial D}+\psi(x)\sup_{\alpha\in A}|f^\alpha|.$$
\end{proof}

\section{Reduction to derivative estimates} \label{sub2}
Proving Theorems \ref{t2} and \ref{t3} can be reduced to a priori estimate on the derivatives of $v$. This is due to the well-known $C^{2,\beta}$ regularity result for fully nonlinear nondegenerate elliptic equations, together with the following lemma.

\begin{lemma}\label{reduction2}
For each $\epsilon>0$, define 
\begin{equation}\label{diffusione}
x_t^{\alpha,x}(\epsilon)=x+\int_0^t\sigma^{\alpha_s}dw_s+\int_0^t\epsilon I d\tilde w_s+\int_0^tb^{\alpha_s}ds,
\end{equation}
where $\tilde w_t$ is a $d$-dimensional Wiener process independent of $w_t$ and $I$ is the identity matrix of size $d\times d$. Let $\tau^{\alpha,x}(\epsilon)$ be the first exit time of $x_t^{\alpha,x}(\epsilon)$ from $D$. Consider the corresponding value function
$$v^\epsilon(x)=\sfa E^\alpha_x\bigg[g\big(x_{\tau(\epsilon)}(\epsilon)\big)e^{-\phi_{\tau(\epsilon)}}+\int_0^{\tau(\epsilon)}f\big(x_t(\epsilon)\big)e^{-\phi_t}dt\bigg].$$
If $f,g\in C^{0,1}(\bar D)$, then we have
\begin{equation}\label{conver}
\lim_{\epsilon\downarrow0}|v^\epsilon-v|_{0,D}=0.
\end{equation}
\end{lemma}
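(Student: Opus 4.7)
The plan is to compare, for each fixed $\alpha\in\mathfrak A$ and $x\in D$, the cost functional
\[J^\alpha(x,\epsilon):=E^\alpha_x\bigg[g(x_{\tau(\epsilon)}(\epsilon))e^{-\phi_{\tau(\epsilon)}}+\int_0^{\tau(\epsilon)}f^{\alpha_s}(x_s(\epsilon))e^{-\phi_s}ds\bigg]\]
with its unperturbed counterpart $J^\alpha(x,0)$ and to show that the difference tends to zero as $\epsilon\downarrow0$ uniformly in $\alpha\in\mathfrak A$ and $x\in D$; taking the supremum over $\alpha$ will then yield (\ref{conver}). Two features make the comparison tractable: since $\sigma^\alpha,b^\alpha,c^\alpha$ are independent of the spatial variable, the pathwise identity $x_t^{\alpha,x}(\epsilon)=x_t^{\alpha,x}+\epsilon\tilde w_t$ holds and the discount $\phi^\alpha_t$ is the same for both problems; and since $|\psi|_{2,D}\le K_0$, the perturbed generator satisfies $\sup_\alpha[L^\alpha+(\epsilon^2/2)\Delta]\psi\le-1/2$ for $\epsilon$ small, so Lemma \ref{taun} applies to both problems and yields moment bounds on $\tau$ and $\tau(\epsilon)$ that are uniform in $\alpha$, $x\in\bar D$, and small $\epsilon$.

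The principal step will be the uniform estimate $E|\tau-\tau(\epsilon)|\le N\epsilon$. On the event $\{\tau(\epsilon)<\tau\}$ the perturbed process has exited, so $\psi(x_{\tau(\epsilon)}+\epsilon\tilde w_{\tau(\epsilon)})=0$ while $x_{\tau(\epsilon)}\in\bar D$, and Lipschitz continuity of $\psi$ gives $0\le\psi(x_{\tau(\epsilon)})\le K_0\epsilon|\tilde w_{\tau(\epsilon)}|$. Enlarging the filtration to $\mathcal G_t:=\mathcal F_t\vee\sigma(\tilde w_s:s\le t)$ preserves the Brownian property of $w$ by independence of $\tilde w$; the strong Markov property then allows one to restart the unperturbed diffusion at $x_{\tau(\epsilon)}$ under the shifted policy $\alpha_{\tau(\epsilon)+\cdot}$, and Lemma \ref{taun} bounds the conditional mean of $\tau-\tau(\epsilon)$ by $\psi(x_{\tau(\epsilon)})$. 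Hence
\[E\big[\tau-\tau(\epsilon);\tau(\epsilon)<\tau\big]\le K_0\epsilon E|\tilde w_{\tau(\epsilon)}|\le N\epsilon,\]
since $E|\tilde w_{\tau(\epsilon)}|^2\le dE\tau(\epsilon)\le N$. The symmetric case $\{\tau<\tau(\epsilon)\}$ is treated identically, restarting the perturbed diffusion from $x_\tau+\epsilon\tilde w_\tau\in\bar D$, whose $\psi$-value is at most $K_0\epsilon|\tilde w_\tau|$.

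Granted the exit-time estimate, the rest is bookkeeping. Extending $f^\alpha$ and $g$ to $\Rd$ while preserving Lipschitz norms, one decomposes $J^\alpha(x,\epsilon)-J^\alpha(x,0)$ into three pieces: (i) the running-payoff error on $[0,\tau\wedge\tau(\epsilon)]$, bounded by $K_0\epsilon E\int_0^{\tau\wedge\tau(\epsilon)}|\tilde w_s|ds$, which is $O(\epsilon)$ by Cauchy-Schwarz combined with $E(\tau\wedge\tau(\epsilon))^2\le N$ from Lemma \ref{taun} and $E\sup_{s\le\tau\wedge\tau(\epsilon)}|\tilde w_s|^2\le NE(\tau\wedge\tau(\epsilon))\le N$ from Doob; (ii) the running-payoff and discount-exponential contributions on the random interval between $\tau$ and $\tau(\epsilon)$, each bounded by a constant times $E|\tau-\tau(\epsilon)|=O(\epsilon)$; and (iii) the terminal-payoff difference, which splits through the intermediate point $x_{\tau(\epsilon)}$ into a piece of size $K_0\epsilon E|\tilde w_{\tau(\epsilon)}|=O(\epsilon)$ and a piece controlled by $K_0 E|x_{\tau(\epsilon)}-x_\tau|\le K_0(NE|\tau-\tau(\epsilon)|)^{1/2}=O(\sqrt\epsilon)$, using the It\^o isometry for the SDE on the random interval $[\tau\wedge\tau(\epsilon),\tau\vee\tau(\epsilon)]$. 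Altogether $|J^\alpha(x,\epsilon)-J^\alpha(x,0)|=O(\sqrt\epsilon)$ uniformly in $\alpha$ and $x$, which proves (\ref{conver}). The main obstacle is the exit-time comparison; once it is in hand, the remaining estimates are direct.
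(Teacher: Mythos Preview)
Your proof is correct and follows the same core idea as the paper's: both reduce to controlling $E|\tau-\tau(\epsilon)|$, and both exploit $L^\alpha\psi\le-1$ together with the Lipschitz bound $\psi(x_{\tau(\epsilon)})\le K_0\epsilon|\tilde w_{\tau(\epsilon)}|$ (on $\{\tau(\epsilon)<\tau\}$) to do so. The difference is in how the Brownian increment $|\tilde w_{\tau(\epsilon)}|$ is handled. The paper truncates at a deterministic time $T$, bounds $E\sup_{t\le T}|\tilde w_t|$ by $O(\sqrt T)$, and then performs a double limit $\epsilon\downarrow0$ followed by $T\uparrow\infty$, obtaining only qualitative convergence. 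You instead invoke Wald's identity $E|\tilde w_{\tau(\epsilon)}|^2\le dE\tau(\epsilon)\le N$ directly (valid because $\tilde w$ remains a Brownian motion in the enlarged filtration $\mathcal G_t$ and $E\tau(\epsilon)<\infty$), which yields the quantitative rate $E|\tau-\tau(\epsilon)|\le N\epsilon$ and hence $|v^\epsilon-v|_{0,D}=O(\sqrt\epsilon)$. Your route is sharper and avoids the truncation entirely; the paper's route is slightly more elementary in that it never stops $\tilde w$ at a random time. One small point: in your piece~(iii), the bound $E|x_{\tau(\epsilon)}-x_\tau|\le (NE|\tau-\tau(\epsilon)|)^{1/2}$ via It\^o isometry needs the drift contribution handled separately (either by $E|\int b\,ds|\le K_0E|\tau-\tau(\epsilon)|$ in $L^1$, or by noting that Lemma~\ref{taun} with $n=2$ also gives $E|\tau-\tau(\epsilon)|^2\le N\epsilon$), but the conclusion $O(\sqrt\epsilon)$ stands.
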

\begin{proof}
Since $f,g$ and $e^{-x}\wedge1$ are all globally Lipschitz, to show (\ref{conver}), it suffices to prove that
\begin{align}
&\lim_{\epsilon\downarrow0}\sup_{x\in D}\sfa E^\alpha_x\sup_{t\le\tau(\epsilon)\wedge\tau}|x_t(\epsilon)-x_t|=0,\label{convx}\\
&\lim_{\epsilon\downarrow0}\sup_{x\in D}\sfa E^\alpha_x|\tau(\epsilon)\vee\tau-\tau(\epsilon)\wedge\tau|=0.\label{convt}
\end{align}

To prove (\ref{convx}), we notice that, for any constant $T\in[1,\infty)$,
\begin{align*}
E^\alpha_x\sup_{t\le\tau(\epsilon)\wedge\tau}|x_t(\epsilon)-x_t|\le&E^\alpha_x\sup_{t\le\tau(\epsilon)\wedge\tau\wedge T}|x_t(\epsilon)-x_t|+KP^\alpha_x(\tau>T)\\
=&\epsilon E^\alpha_x\sup_{t\le\tau(\epsilon)\wedge\tau\wedge T}|\tilde w_t|+\frac{K}{T}E^\alpha_x\tau\\
\le&3\epsilon T+\frac{K}{T}|\psi|_{0,D}.
\end{align*}
By taking the supremum with respect to $\alpha$ on the left side and letting first $\epsilon\downarrow0$ and then $T\uparrow\infty$, we obtain (\ref{convx}).

To prove (\ref{convx}), we notice that
$$E^\alpha_x|\tau(\epsilon)\vee\tau-\tau(\epsilon)\wedge\tau|=E^\alpha_x(\tau-\tau(\epsilon))\mathbbm1_{\tau\ge\tau(\epsilon)}+E^\alpha_x(\tau(\epsilon)-\tau)\mathbbm1_{\tau<\tau(\epsilon)}.$$
Then we estimate both terms. We have
\allowdisplaybreaks\begin{align*}
\MoveEqLeft E^\alpha_x(\tau-\tau(\epsilon))\mathbbm1_{\tau>\tau(\epsilon)}\\
&\le-E_x^\alpha\int_{\tau(\epsilon)\wedge\tau}^{\tau}L\psi(x_t)dt\\
&=E^\alpha_x\Big(\psi (x_{\tau(\epsilon)})-\psi(x_{\tau(\epsilon)}(\epsilon))\Big)\mathbbm1_{\tau(\epsilon)<\tau}\\
&\le E^\alpha_x\Big(\psi (x_{\tau(\epsilon)})-\psi(x_{\tau(\epsilon)}(\epsilon))\Big)\mathbbm1_{\tau(\epsilon)<\tau\le T}+2|\psi|_{0,D}P^\alpha_x(\tau>T)\\
&\le |\psi|_{0,1,D}E^\alpha_x\sup_{t\le \tau(\epsilon)\wedge\tau\wedge T}|x_t-x_t(\epsilon)|+\frac{K}{T}E_x^\alpha\tau.
\end{align*}
Similarly, by notice that for sufficiently small $\epsilon$, 
\begin{equation}\label{Lpsie}
L^\alpha(\epsilon) \psi=L^\alpha\psi+\frac{\epsilon^2}{2}\Delta \psi\le-1/2,
\end{equation}
we have
\begin{align*}
E^\alpha_x(\tau(\epsilon)-\tau)\mathbbm1_{\tau<\tau(\epsilon)}\le&-2E_x^\alpha\int_{\tau(\epsilon)\wedge\tau}^{\tau(\epsilon)}L(\epsilon)\psi(x_t(\epsilon))dt\\
\le&2|\psi|_{0,1,D}E_x^\alpha\sup_{t\le \tau(\epsilon)\wedge\tau\wedge T}|x_t-x_t(\epsilon)|+\frac{K}{T}E_x^\alpha\tau.
\end{align*}
It turns out that
$$E^\alpha_x|\tau(\epsilon)\vee\tau-\tau(\epsilon)\wedge\tau|\le \epsilon KT+\frac{K}{T}.$$
Again, by taking the supremum over $\mathfrak A$ on the left side of the inequality and letting first $\epsilon\downarrow0$ and then $T\uparrow\infty$, we obtain (\ref{convt}).
\end{proof}

Now we state our reduction and explain how it works.

\begin{remark}\label{reducing}
To prove Theorem \ref{t2}, it suffices to establish the first derivative estimate (\ref{e1}) by a priori assuming that $v\in C^{1}(\bar D)$. Similarly, to prove the regularity results in Theorem \ref{t3}, it suffices to establish the second derivative estimate (\ref{e2}) by a priori assuming that $v\in C^{2}(\bar D)$. Moreover, it doesn't hurt to suppose that $f^\alpha,g\in C^{2}(\bar D)$ when estimating the derivatives.
\end{remark}

Indeed, for the controlled diffusion process given by (\ref{diffusione}), its diffusion term is of size $d\times(d+d_1)$ in the form of $\sigma^\alpha(\epsilon)=(\sigma^\alpha|\epsilon I)$. As a result, its associated Bellman equation is
$$\sup_{\alpha\in A}\big[\big(a^\alpha(\epsilon)\big)_{ij} u^\epsilon_{x^ix^j}+(b^\alpha)_i u^\epsilon_{x^i}-c^\alpha u^\epsilon+f^\alpha\big]=0,$$
where $a^\alpha(\epsilon)=a^\alpha+(\epsilon^2/2) I$, which is nondegenerate for each $\epsilon>0$. Suppose that $f^\alpha$ and $g$ are as smooth as we want. By Theorem 7 in Section 6.2 of \cite{MR901759}, we know that for each $\epsilon>0$, the nondegenerate bellman equation with Dirichlet boundary data has a unique solution $u^\epsilon$ in the class of $C^{2,\beta}(\bar D)$. By It\^o's formula and the uniqueness of this PDE problem, we see that $u^\epsilon=v^{\epsilon}$, which implies that the first and second derivatives of $v^\epsilon$ exist up to the boundary, for each $\epsilon>0$. 

Then we may estimate the derivatives of $v^\epsilon$. First, Assupmtion \ref{a1} implies that for sufficiently small $\epsilon>0$,
$$\sup_{\alpha\in A}L^\alpha(\epsilon)\psi(x)=\sup_{\alpha\in A}L^\alpha\psi(x)+\frac{\epsilon^2}{2}\Delta\psi(x)\le-1/2.$$
Second, since
$$qa^\alpha(\epsilon)q^*=qa^{\alpha}q^*+(\epsilon^2/2)I,$$
from Assupmtion \ref{a2}, 
\begin{align*}
&\sup_{q\in\mathbb O^d}\sup_{\alpha\in  A}\big[(qa^\alpha(\epsilon) q^*)_{ij}\gamma^{ij}+(b^\alpha)_ip^i-c^\alpha z+f^\alpha(x)\big]\\
=&\sup_{\alpha\in A}\big[(a^\alpha(\epsilon))_{ij}\gamma^{ij}+(b^\alpha)_ip^i-c^\alpha z+f^\alpha(x)\big].
\end{align*}
They play the same roles as Assuptions \ref{a1} and \ref{a2}, respectively.

Once we obtained the first derivative estimate (\ref{e1}) for $v^\epsilon$, we know that $v^\epsilon$ is locally Lipschitz for each sufficiently small $\epsilon$. Notice that the constant $N$ in (\ref{e1}) doesn't depend on $\epsilon$. Therefore by letting $\epsilon\downarrow0$, we conclude that $v$ is locally Lipschitz, and then obtain the same first derivative estimate a.e. in $D$.

If we have the first inequality in (\ref{e3}) for $v^\epsilon$, which is the second derivative estimate from below, we have
\begin{align*}
\MoveEqLeft v^\epsilon_{(\xi)(\xi)}+N\Big[|x|^2+\psi\Big(\log\frac{\psi}{\kappa}-1\Big)\Big]_{(\xi)(\xi)}\\
&=v^\epsilon_{(\xi)(\xi)}+N\Big(|\xi|^2+\pxsop+\psi_{(\xi)(\xi)}\log\frac{\psi}{\kappa}\Big)\ge0,
\end{align*}
in the set $\{x\in D: \psi\le\kappa\}$. Then we see that the function given in (\ref{e2}) is convex $\{x\in D: \psi\le\kappa\}$. Again, the constant $N$ here doesn't on $\epsilon$. By letting $\epsilon\downarrow0$ we have the same conclusion for $v$. If we furthermore have the second inequality in (\ref{e3}) for $v^\epsilon$, then we know that the derivatives of $v^{\epsilon}$ are locally Lipschitz, by letting $\epsilon\downarrow0$, we conclude that the second derivatives of $v$ exist almost everywhere, and satisfy the second derivative estimate (\ref{e3}).

Observe further that for each fixed $\epsilon>0$, the functions $f^\alpha$ and $g$ can be uniformly approximated in $\bar D$ by sufficiently smooth functions, in such a way that the constant $N$ in (\ref{e1}) and (\ref{e3}) increases by at most a factor of two when $f^\alpha$ and $g$ are replaced with the approximating functions. Therefore we may suppose $f,g\in C^2(\bar D)$ when estimating the derivatives.

\section{Quasiderivatives and auxiliary convergence results}\label{sub3}

In this section, we introduce the quasiderivatives and collect auxiliary convergence results to be used repetitively Sections \ref{sub5} and \ref{sub6}.

For each $\alpha\in\mathfrak A$, let $r_t^\alpha$, $\hat r_t^\alpha$, $\pi_t^\alpha$, $\hat\pi_t^\alpha$, $P_t^\alpha$, $\hat P_t^\alpha$ be jointly measurable adapted processes with values in $\mathbb R$, $\mathbb R$, $\mathbb R^{d_1}$, $\mathbb R^{d_1}$, $\operatorname{Skew}(d,\mathbb R)$, $\operatorname{Skew}(d,\mathbb R)$, respectively, where $\operatorname{Skew}(d,\mathbb R)$ denotes the set of all $d\times d$-size skew-symmetric matrices. Let $\epsilon\in[0,1]$ be constant. For each $\alpha\in\mathfrak A$, $x,y,z\in  D$, $\xi,\eta\in \Rd$, we consider the diffusion process defined by (\ref{itox}) and the following six other stochastic processes:
\allowdisplaybreaks\begin{align}
\label{itoy}
dy_t^{\alpha,y}( \epsilon)=&\sqrt{\theta_t^\alpha(\epsilon)}e^{ \epsilon P_t^{\alpha}}\sigma^{\alpha_t}dw_t+\Big[\theta_t^\alpha(\epsilon) b^{\alpha_t}-\sqrt{\theta_t^\alpha(\epsilon)}e^{ \epsilon P_t^{\alpha}}\sigma^{\alpha_t} \epsilon\pi_t^{\alpha}\Big]dt,\\
\label{itoz}
dz_t^{\alpha,z}(\epsilon)=&\sqrt{\hat\theta_t^\alpha(\epsilon)}e^{ \epsilon P_t^{\alpha}}e^{\frac{\epsilon^2}{2}  \hat P_t^{\alpha}}\sigma^{\alpha_t}dw_t\\
\nonumber&+\Big[\hat\theta_t^\alpha(\epsilon) b^{\alpha_t}-\sqrt{\hat\theta_t^\alpha(\epsilon)}e^{ \epsilon P_t^{\alpha}}e^{\frac{\epsilon^2}{2} \hat P_t^{\alpha}}\sigma^{\alpha_t}( \epsilon\pi_t^{\alpha}+\frac{\epsilon^2}{2} \hat\pi_t^{\alpha})\Big]dt,\\
\label{itoxi}
d\xi_t^{\alpha,\xi}=&\Big[r^\alpha_t\sigma^{\alpha_t}+P^\alpha_t\sigma^{\alpha_t} \Big]dw_t+\Big[2r^\alpha_tb^{\alpha_t}-\sigma^{\alpha_t} \pi^\alpha_t\Big]dt,\\
\label{itoeta}
d\eta_t^{\alpha,\eta}=&\Big[\big(\hat{r}^\alpha_t-(r^\alpha_t)^2\big)\sigma^{\alpha_t}+\big(\hat{P}^\alpha_t+(P^\alpha_t)^2+2r^\alpha_tP^\alpha_t \big)\sigma^{\alpha_t} \Big]dw_t\\
\nonumber &+\Big[2\hat{r}^\alpha_t b^{\alpha_t}-\sigma^{\alpha_t}\hat{\pi}^\alpha_t-2\big(r^\alpha_t\sigma^{\alpha_t}+P^\alpha_t\sigma^{\alpha_t} \big) \pi^\alpha_t\Big]dt,\\
\label{itoxia}
d\tilde\xi_t^{\alpha,0}=&\pi^\alpha_tdw_t,\\
\label{itoetaa}
d\tilde\eta_t^{\alpha,0}=&\hat\pi^\alpha_tdw_t+d\big(\tilde\xi_t^{\alpha,0}\big)^2-d\big\langle\tilde\xi^{\alpha,0}\big\rangle_t,
\end{align}
where
\begin{align}
\label{theta}
\theta_t^\alpha(\epsilon)=&1+\frac{1}{\pi}\arctan\big(\pi2\epsilon r^\alpha_t\big),\\
 \hat\theta_t^\alpha(\epsilon)=&1+\frac{1}{\pi}\arctan\Big[\pi\big(2\epsilon r_t+\epsilon^2\hat r_t^\alpha\big)\Big].\label{thetahat}
\end{align}

In (\ref{itoy}) and (\ref{itoz}), notice that when $\epsilon=0$, we have $x_t^{\alpha,y}$ and $x_t^{\alpha,z}$. In Lemmas \ref{thm1} and \ref{thm2}, we will prove that under suitable conditions, $\xi_t^{\alpha,\xi}$ and $\eta_t^{\alpha,\eta}$, given by (\ref{itoxi}) and (\ref{itoeta}) respectively,  are the first derivative of $y_t^{\alpha,x+\epsilon\xi}(\epsilon)$ and the second derivative of $z_t^{\alpha, x+\epsilon\xi+\epsilon^2\eta/2}(\epsilon)$ in an appropriate sense (see (\ref{res1b2}) and (\ref{res2b3})), respectively. We call $\xi_t^{\alpha,\xi}$ the first quasiderivative, $\eta_t^{\alpha,\eta}$ the second quaisiderivative, $\tilde\xi_t^{\alpha,0}$ the first adjoint quasiderivative and $\tilde\eta_t^{\alpha,0}$ the second adjoint quasiderivative. The auxiliary processes $r_t^\alpha$ and $\hat r_t^\alpha$ come from random time change, $\pi_t^\alpha$ and $\hat\pi_t^\alpha$ are due to Girsanov's theorem on changing the probability space, and $P_t^\alpha$ and $\hat P_t^\alpha$ appear in order to utilize Assumption \ref{a2}. 

Sufficient conditions should be given on the auxiliary processes such that (\ref{itoy})-(\ref{itoetaa}) are meaningful. Note that, in the next section, we will define the auxiliary processes $r_t^\alpha$, $\pi_t^\alpha$, and $P_t^\alpha$ as functions of $\xi_t^{\alpha,\xi}$, therefore (\ref{itoxi}) will be a stochastic differential equation. We provide the following lemma which is applicable to the quasiderivatives defined in next section.

\begin{lemma}\label{meaningful}
For each $\alpha\in \mathfrak A$, if  $r^\alpha_t$, $(\pi_t^\alpha)^k$ and $(P_t^{\alpha})^{ij}$ are all in the form of $(\xi_t^{\alpha,\xi},\mathfrak p_t^\alpha)$,
where $\mathfrak p_t^\alpha$ is independent of $\xi_t^{\alpha,\xi}$ and satisfies $|\mathfrak p_t^\alpha|\le C$, and $\hat r_t^\alpha$, $(\hat\pi_t^\alpha)^k$ and $(\hat P_t^\alpha)^{ij}$ are all in the form of $\mathfrak q_t^\alpha$, which is independent of $\eta_t^{\alpha,\eta}$ and satisfies $\int_0^t|\mathfrak q_t^\alpha|^2<\infty$ a.s., for all $t\ge0$, then (\ref{itoxi}) has a unique solution up to indistinguishability, and 
(\ref{itoy})-(\ref{itoetaa}) are well-defined.
\end{lemma}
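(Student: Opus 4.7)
The plan is to exploit a decoupling: equation (\ref{itoxi}) is the only genuine stochastic differential equation (for $\xi^{\alpha,\xi}_t$), while each of (\ref{itoy})--(\ref{itoetaa}) expresses its target process as an It\^o integral whose integrand depends only on the already-constructed $\xi^{\alpha,\xi}_t$ and on the given processes $\hat r^\alpha,\hat\pi^\alpha,\hat P^\alpha$. Accordingly, I would first establish existence and uniqueness for (\ref{itoxi}), then verify pathwise integrability of the remaining five integrands.

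For (\ref{itoxi}), the hypothesis that each scalar entry of $r^\alpha_t,\pi^\alpha_t,P^\alpha_t$ has the form $(\xi^{\alpha,\xi}_t,\mathfrak p^\alpha_t)$ with $|\mathfrak p^\alpha_t|\le C$ deterministically turns (\ref{itoxi}) into an SDE whose coefficients are linear in $\xi$ with Lipschitz constants controlled by $C$, $d$, and $K_0$ (using $\|\sigma^{\alpha_t}\|,|b^{\alpha_t}|\le K_0$). Standard existence and uniqueness theory for SDEs with bounded Lipschitz coefficients then yields a unique strong solution up to indistinguishability. The resulting $\xi^{\alpha,\xi}_t$ has continuous paths, and the inner-product form gives $|r^\alpha_t|\vee|\pi^\alpha_t|\vee\|P^\alpha_t\|\le C|\xi^{\alpha,\xi}_t|$ (up to a dimensional constant), so these auxiliary processes are almost surely locally bounded, hence in $L^p_{\mathrm{loc}}$ for every $p\in[1,\infty)$.

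For (\ref{itoy}) and (\ref{itoz}), the key observations are that $\theta^\alpha_t(\epsilon),\hat\theta^\alpha_t(\epsilon)\in(1/2,3/2)$ since $|\arctan|<\pi/2$, and that $e^{\epsilon P^\alpha_t}$ and $e^{(\epsilon^2/2)\hat P^\alpha_t}$ are orthogonal matrices by skew-symmetry of $P^\alpha_t,\hat P^\alpha_t$. Consequently the diffusion integrands are uniformly bounded, and the drift integrands are dominated by $K_0$ plus a constant multiple of $|\pi^\alpha_t|+|\hat\pi^\alpha_t|$, which is in $L^1_{\mathrm{loc}}$ by the previous step and the hypothesis $\int_0^T|\mathfrak q^\alpha_t|^2\,dt<\infty$. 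For (\ref{itoeta}) the diffusion integrand is dominated pointwise by $|\hat r^\alpha_t|+|r^\alpha_t|^2+\|\hat P^\alpha_t\|+\|P^\alpha_t\|^2+|r^\alpha_t|\|P^\alpha_t\|$, each term in $L^2_{\mathrm{loc}}$, and the drift is handled analogously. Finally, (\ref{itoxia}) is $\int_0^t\pi^\alpha_s\,dw_s$, well-defined since $\pi^\alpha\in L^2_{\mathrm{loc}}$, and (\ref{itoetaa}) is well-defined by adding $\int_0^t\hat\pi^\alpha_s\,dw_s$ to $(\tilde\xi^{\alpha,0}_t)^2-\langle\tilde\xi^{\alpha,0}\rangle_t$, both meaningful from the preceding steps.

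The only nontrivial step is the existence/uniqueness argument for (\ref{itoxi}); everything else reduces to routine integrability bookkeeping. The essential feature that makes this work is the uniform deterministic bound $|\mathfrak p^\alpha_t|\le C$, which supplies the uniform-in-$(\alpha,\omega,t)$ Lipschitz property needed to run the linear-SDE argument, combined with the $L^2_{\mathrm{loc}}$ control inherited by polynomial combinations of $r^\alpha,P^\alpha$ and of $\hat r^\alpha,\hat\pi^\alpha,\hat P^\alpha$.
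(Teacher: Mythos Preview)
Your proposal is correct and follows essentially the same approach as the paper: first solve the linear SDE (\ref{itoxi}) via standard Lipschitz theory, then verify that the integrands in (\ref{itoy})--(\ref{itoetaa}) satisfy the required local integrability conditions. The paper cites Theorem V.1.1 in \cite{MR1311478} for the linear SDE step and then, rather than relying on path continuity of $\xi^{\alpha,\xi}_t$ for local boundedness as you do, invokes a moment estimate (Lemma 3.1(1) in \cite{quasi-bellman}) to obtain $\sup_{\alpha}E\sup_{t\le\gamma\wedge T}|\xi_t|^p<\infty$, from which $\int_0^T|\xi_t|^p\,dt<\infty$ a.s.\ follows; this moment bound is reused later in the proof of Lemma~\ref{thm1}, so the paper is setting up slightly more than the bare lemma requires. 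Your route through pathwise continuity is cleaner for the statement at hand and equally valid.
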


\begin{proof} Since $r^\alpha_t$, $(\pi_t^\alpha)^k$ and $(P_t^{\alpha})^{ij}$ are affine functions of $\xi_t^{\alpha,\xi}$, by applying Theorem V.1.1 in \cite{MR1311478}, we conclude that (\ref{itoxi}) has a unique solution up to indistinguishability. Moreover, apply Lemma 3.1(1) in \cite{MR3047001} to $\xi_t^{\alpha,\xi}$ with $M_t^\alpha=0$, we have, for any constants $T, p\in(0,\infty)$,
\begin{equation}\label{xip}
\sup_{\alpha\in\mathfrak A}E^\alpha_\xi\sup_{t\le\gamma\wedge T}|\xi_t|^p\le\left\{
\begin{array}{ll}
\displaystyle e^{NT}|\xi|^p&\mbox{ if }p\ge2\\
\displaystyle e^{NT}\frac{4-p}{2-p}|\xi|^p&\mbox{ if }p<2,
\end{array}
\right. 
\end{equation}
which implies that
$$\int_0^T|\xi_t^{\alpha,\xi}|^p<\infty, \mbox{ a.s.. }$$

To prove that (\ref{itoy}) is well-defined, it suffices to show that for any $T\in(0,\infty)$, a.s.
\allowdisplaybreaks\begin{equation*}
\begin{gathered}
\int_0^T\Big\|\sqrt{\theta_t^\alpha(\epsilon)}e^{ \epsilon P_t^{\alpha}}\sigma^{\alpha_t}\Big\|^2+\Big|\theta_t^\alpha(\epsilon)b^{\alpha_t}-\sqrt{\theta_t^\alpha(\epsilon)}e^{ \epsilon P_t^{\alpha}}\sigma^{\alpha_t} \epsilon\pi_t^{\alpha}\Big|dt<\infty,
\end{gathered}
\end{equation*}
which is true since
\begin{equation*}
\begin{gathered}
\|\sqrt{\theta_t^\alpha(\epsilon)}e^{ \epsilon P_t^{\alpha}}\sigma^{\alpha_t}\|\le \sqrt{3/2}K_0,
\qquad|\theta_t^\alpha(\epsilon)b^{\alpha_t}|\le(3/2)K_0,\\
|\sqrt{\theta_t^\alpha(\epsilon)}e^{ \epsilon P_t^{\alpha}}\sigma^{\alpha_t} \epsilon\pi_t^{\alpha}|\le\sqrt{3/2}K_0C|\xi_t^{\alpha,\xi}|.
\end{gathered}
\end{equation*}

Similarly, we can prove that (\ref{itoz}), (\ref{itoeta})-(\ref{itoetaa}) are well-defined.
\end{proof}

In the next two lemmas, we collect convergence results to be used in the proof of the main theorems.

Let $U$ be a connected open subset of $D$. Define
\begin{equation*}
\begin{gathered}
\tau_U^{\alpha,x}=\inf\{t\ge0:x_t^{\alpha,x}\notin U\},\\
\bar\tau_U^{\alpha,y}(\epsilon)=\inf\{t\ge0:y_t^{\alpha,y}(\epsilon)\notin U\},\qquad
\hat\tau_U^{\alpha,z}(\epsilon)=\inf\{t\ge0:z_t^{\alpha,z}(\epsilon)\notin U\}.
\end{gathered}
\end{equation*}

\begin{lemma}\label{thm1} Suppose that the assumptions on $r_t^\alpha$, $\hat r_t^\alpha$, $\pi_t^\alpha$, $\hat\pi_t^\alpha$, $P_t^\alpha$ and $\hat P_t^\alpha$ in Lemma \ref{meaningful} hold. Given any $x\in U$, $\xi\in\Rd$ and constants $p\in (0,\infty)$, $p'\in[0,p)$, $T\in [1,\infty)$, we have the following results. 

Given stopping times $\gamma^\alpha$ satisfying $\gamma^\alpha\le\tau_U^{\alpha,x}$ for each $\alpha\in\mathfrak A$, we have
\begin{equation}\label{res1a}
\sup_{\alpha\in\mathfrak A}E^\alpha_\xi\sup_{t\le\gamma\wedge T}|\xi_t|^p<\infty.
\end{equation}

Let the constant $\epsilon_0\le1$ be sufficiently small so that $B(x,\epsilon_0|\xi|)\subset U$. For any $\epsilon\in[0,\epsilon_0]$, given stopping times $\gamma^\alpha(\epsilon)$ such that 
$$\gamma^\alpha(\epsilon)\le\tau_U^{\alpha,x}\wedge\bar\tau_U^{\alpha,x+\epsilon\xi}(\epsilon)$$ for each $\alpha\in\mathfrak A$, we have
\begin{equation}\label{res1b1}
\lim_{\epsilon\downarrow0}\sup_{\alpha\in\mathfrak A}E\sup_{t\le\gamma^\alpha(\epsilon)\wedge T}\frac{|y_t^{\alpha,x+\epsilon\xi}(\epsilon)-x_t^{\alpha,x}|^p}{\epsilon^{p'}}=0,
\end{equation}
\begin{equation}\label{res1b2}
\lim_{\epsilon\downarrow0}\sup_{\alpha\in\mathfrak A}E\sup_{t\le\gamma^\alpha(\epsilon)\wedge T}\Big|\frac{y_t^{\alpha,x+\epsilon\xi}(\epsilon)-x_t^{\alpha,x}}{\epsilon}-\xi_t^{\alpha,\xi}\Big|^p=0.
\end{equation}
If for each $\alpha\in A$, the function $h^\alpha: \bar U\rightarrow \mathbb R$ is in the class of $C^{0,1}(\bar U)$, and the Lipschitz constants of $h^\alpha$ are uniformly bounded with respect to $\alpha$, then we have
\begin{equation}\label{hres1b1}
\lim_{\epsilon\downarrow0}\sup_{\alpha\in\mathfrak A}E\sup_{t\le\gamma^\alpha(\epsilon)\wedge T}\frac{|h^{\alpha_t}(y_t^{\alpha,x+\epsilon\xi}(\epsilon))-h^{\alpha_t}(x_t^{\alpha,x})|^p}{\epsilon^{p'}}=0.
\end{equation}
If furthermore $h^\alpha\in C^1(\bar U)$, and $h_x^\alpha$ are  continuous in $x$, uniformly with respect to $\alpha$, then we have
\begin{equation}\label{hres1b2}
\lim_{\epsilon\downarrow0}\sup_{\alpha\in\mathfrak A}E\sup_{t\le\gamma^\alpha(\epsilon)\wedge T}\bigg|\frac{h^{\alpha_t}(y_t^{\alpha,x+\epsilon\xi}(\epsilon))-h^{\alpha_t}(x_t^{\alpha,x})}{\epsilon}-h^{\alpha_t}_{(\xi_t^{\alpha,\xi})}(x_t^{\alpha,x})\bigg|^p=0.
\end{equation}
\end{lemma}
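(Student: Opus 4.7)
The plan is to reduce every statement to a moment estimate for a linear SDE of the type given by Lemma 3.1(1) in \cite{quasi-bellman}, the same tool already invoked in the proof of Lemma \ref{meaningful}. Throughout I abbreviate $x_t = x_t^{\alpha,x}$, $y_t(\epsilon) = y_t^{\alpha,x+\epsilon\xi}(\epsilon)$, $\xi_t = \xi_t^{\alpha,\xi}$, and I exploit two structural features of the auxiliary processes: $e^{\epsilon P_t^\alpha}$ is an orthogonal matrix (since $P_t^\alpha$ is skew-symmetric), and $\theta_t^\alpha(\epsilon)$ is confined to $[1/2, 3/2]$ (by its definition (\ref{theta}) via arctan). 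These global bounds will be decisive when controlling Taylor remainders in a regime where $r_t^\alpha$ or $P_t^\alpha$ are large.

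For (\ref{res1a}), the SDE (\ref{itoxi}) for $\xi_t$ has diffusion and drift coefficients that are linear in $\xi_t$ with coefficients bounded uniformly in $(\alpha, t, \omega)$, thanks to the structural hypotheses on $r_t^\alpha$, $\pi_t^\alpha$, $P_t^\alpha$ carried over from Lemma \ref{meaningful}. Applying Lemma 3.1(1) of \cite{quasi-bellman} with $M_t^\alpha \equiv 0$ and restricting the stopping time to $\gamma^\alpha \wedge T$ yields (\ref{res1a}) at once, exactly as argued inside the proof of Lemma \ref{meaningful}.

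For (\ref{res1b1}) and (\ref{res1b2}), set $\delta_t(\epsilon) = y_t(\epsilon) - x_t$ and $\rho_t(\epsilon) = \delta_t(\epsilon)/\epsilon - \xi_t$. Subtracting (\ref{itox}) from (\ref{itoy}), $\delta_t(\epsilon)$ solves an SDE with diffusion $[\sqrt{\theta_t(\epsilon)} e^{\epsilon P_t^\alpha} - I]\sigma^{\alpha_t}$, drift $(\theta_t(\epsilon) - 1) b^{\alpha_t} - \sqrt{\theta_t(\epsilon)} e^{\epsilon P_t^\alpha} \sigma^{\alpha_t}\epsilon\pi_t^\alpha$, and initial value $\epsilon\xi$. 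The elementary bounds $|\theta_t(\epsilon) - 1| \le 2\epsilon |r_t^\alpha|$ and $\|e^{\epsilon P_t^\alpha} - I\| \le C\epsilon\|P_t^\alpha\|$ (the latter from the spectral decomposition of the skew-symmetric matrix $P_t^\alpha$ together with $|e^{is} - 1| \le |s|$) imply that both coefficients are dominated by $\epsilon C(1 + |\xi_t|)$. Lemma 3.1(1) of \cite{quasi-bellman} with $M_t^\alpha \propto \epsilon(1 + |\xi_t|)$, combined with (\ref{res1a}) to control $\sup_\alpha E\int_0^{\gamma(\epsilon) \wedge T} (M_t^\alpha)^p\, dt$, then delivers $\sup_\alpha E \sup_{t \le \gamma^\alpha(\epsilon) \wedge T} |\delta_t(\epsilon)|^p \le C_T \epsilon^p$, which gives (\ref{res1b1}) upon division by $\epsilon^{p'}$. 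For (\ref{res1b2}) I Taylor-expand $\sqrt{\theta_t(\epsilon)} e^{\epsilon P_t^\alpha} = I + \epsilon(r_t^\alpha I + P_t^\alpha) + \epsilon R_t(\epsilon)$, and analogously for the drift coefficient, so that $\rho_t(\epsilon)$ satisfies an SDE with $\rho_t$-linear coefficients of bounded size plus forcing terms that are $o(1)$ in $L^p$ as $\epsilon\downarrow 0$; another application of Lemma 3.1(1) of \cite{quasi-bellman} then closes the argument.

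For (\ref{hres1b1}) and (\ref{hres1b2}) I would argue softly. The first is immediate from the uniform Lipschitz constant of $h^\alpha$. For the second, write
$$\frac{h^{\alpha_t}(y_t(\epsilon)) - h^{\alpha_t}(x_t)}{\epsilon} - h^{\alpha_t}_{(\xi_t)}(x_t) = \int_0^1 \bigl[h_x^{\alpha_t}(x_t + s\delta_t(\epsilon)) - h_x^{\alpha_t}(x_t)\bigr]\cdot\frac{\delta_t(\epsilon)}{\epsilon}\, ds + h_x^{\alpha_t}(x_t)\cdot\rho_t(\epsilon),$$
and handle the two terms by H\"older's inequality: the first using a common modulus of continuity $\omega$ of $\{h_x^\alpha\}_{\alpha\in A}$ together with (\ref{res1b1}) and dominated convergence, the second using boundedness of $h_x^\alpha$ combined with (\ref{res1b2}). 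The main obstacle is the Taylor expansion step in (\ref{res1b2}): because $r_t^\alpha$ and $P_t^\alpha$ are only moment-bounded rather than pointwise bounded, the remainder $\sqrt{\theta_t(\epsilon)} e^{\epsilon P_t^\alpha} - I - \epsilon(r_t^\alpha I + P_t^\alpha)$ is not globally of size $O(\epsilon^2)$ in $\omega$. The resolution is to estimate this remainder by the minimum of a quadratic Taylor bound $O(\epsilon^2(|r_t^\alpha|^2 + \|P_t^\alpha\|^2))$ and the global bound $O(1 + \epsilon|r_t^\alpha| + \epsilon\|P_t^\alpha\|)$ coming from orthogonality of $e^{\epsilon P_t^\alpha}$ and boundedness of $\theta_t(\epsilon)$, and then to pass to zero in $L^p$ by dominated convergence using (\ref{res1a}).
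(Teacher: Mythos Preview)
Your approach is essentially the same as the paper's: both reduce everything to the moment lemma (Lemma 3.1 in \cite{quasi-bellman}), use the elementary Taylor bounds on $\theta_t(\epsilon)$ and $e^{\epsilon P_t^\alpha}$, and then handle (\ref{hres1b1})--(\ref{hres1b2}) by Lipschitz continuity and a modulus-of-continuity splitting. The paper's version of your $\delta_n$ argument for (\ref{hres1b2}) is the indicator decomposition $\mathbbm1_{|y-x|<\delta_n}+\mathbbm1_{|y-x|\ge\delta_n}$, which is exactly your integral-form argument made explicit.

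Two small over-complications are worth noting. First, in (\ref{res1b2}) you say $\rho_t(\epsilon)$ satisfies an SDE with ``$\rho_t$-linear coefficients''; in fact, because $\sigma^{\alpha_t}$ and $b^{\alpha_t}$ are constant in space, the SDE for $\rho_t(\epsilon)$ has \emph{no} $\rho_t$-dependence at all and is driven purely by the Taylor remainders --- this only makes the estimate easier. Second, your final paragraph's concern is unnecessary. The second-order Taylor remainders are bounded \emph{globally} by $C\epsilon(r_t^2+\|P_t\|^2)\le C\epsilon|\xi_t|^2$: for instance $|\theta_t''(\epsilon')|\le 8\pi r_t^2$ holds for all $\epsilon'$ because $\arctan$ has globally bounded second derivative, and $\|P_t^2e^{\epsilon'P_t}\|\le\|P_t\|^2$ by orthogonality. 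So one may take $M_t^\alpha=C\epsilon|\xi_t|^2$ in Lemma 3.1 directly, with its $p$-th moments controlled by (\ref{res1a}); no interpolation between a quadratic bound and a crude global bound, and no dominated-convergence subtlety, is required. The paper does exactly this and then says ``mimic the proof of (\ref{res1b1}).''
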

\begin{proof}
In the proof, we drop the superscripts $\alpha$, $\alpha_t$, etc., when this will not cause confusion.

The first property (\ref{res1a}) has been proved in Lemma \ref{meaningful}, see (\ref{xip}).

To prove the others we first consider the It\^o stochastic equations (3.1) and (3.2) in \cite{MR3047001} where
$$\zeta_t^{\alpha,\zeta}=x_t^{\alpha,x},\qquad\zeta_t^{\alpha,\zeta(\epsilon)}(\epsilon)=y_t^{\alpha,x+\epsilon\xi}(\epsilon).$$
Notice that
\begin{align*}
|\sqrt{\theta_t(\epsilon)}-1|=&\frac{(1/\pi)|\arctan(\pi2\epsilon r_t)|}{\sqrt{\theta_t(\epsilon)}+1}\le 2\epsilon |r_t|,\\
\|e^{\epsilon P_t}-I_{d\times d}\|=&\epsilon \|P_t e^{\epsilon' P_t}\|,
\end{align*}
where $\epsilon'\in(0,\epsilon)$ is non-constant and due to Mean Value Theorem. Therefore,
\begin{align*}
\|\kappa_t(y,\epsilon)-\kappa_t(x)\|\le&\|\sigma\|\big[(\sqrt{\theta_t(\epsilon)}-1)\|e^{\epsilon P_t}\|+\|e^{\epsilon P_t}-I_{d\times d}\|\big]\\
\le&\epsilon K_0 C|\xi_t|3\sqrt d,\\
|\nu_t(y,\epsilon)-\nu_t(x)|\le&2\epsilon|r_t|\|\sigma\|+(3/2)\|\sigma\|\sqrt d\epsilon|\pi_t|\\
\le&\epsilon K_0\big[2+(3/2)\sqrt d \big]C|\xi_t|.
\end{align*}
Applying Lemma 3.1(2) in \cite{MR3047001} with $M=0$ and $M_t^\alpha=M(K_0,C,d)|\xi_t^{\alpha,\xi}|$, we have
$$\sup_{\alpha\in\mathfrak A}E\sup_{t\le\gamma^\alpha(\epsilon)\wedge T}|y_t^{\alpha,x+\epsilon\xi}(\epsilon)-x_t^{\alpha,x}|^p$$
\begin{equation*}
\le\left\{
\begin{array}{ll}
\displaystyle\epsilon^pe^{NT}\bigg[|\xi|^p+(2p-1)\sup_{\alpha\in\mathfrak A}E^\alpha\int_0^{\gamma(\epsilon)\wedge T}M_t^pdt\bigg]&\mbox{if }p\ge2\\
\displaystyle\epsilon^pe^{NT}\frac{4-p}{2-p}\bigg[|\xi|^p+3^{p/2}\Big(\sup_{\alpha\in\mathfrak A}E^\alpha\int_0^{\gamma(\epsilon)\wedge T}M_t^2dt\Big)^{p/2}\bigg]&\mbox{if }p<2.
\end{array}
\right. 
\end{equation*}
Due to (\ref{res1a}), we have
$$\sup_{[0,\epsilon_0]}\sup_{\alpha\in\mathfrak A}E^\alpha\int_0^{\gamma(\epsilon)\wedge T}M_t^{2\vee p}dt<\infty,$$
which completes the proof of (\ref{res1b1}).

We next consider the It\^o stochastic equations (3.1) and (3.2) in \cite{MR3047001} with 
$$\zeta_t^{\alpha,\zeta}=\xi_t^{\alpha,\xi},\qquad\zeta_t^{\alpha,\zeta(\epsilon)}(\epsilon)=\xi_t^{\alpha,\xi}(\epsilon):=\frac{y_t^{\alpha,x+\epsilon\xi}(\epsilon)-x_t^{\alpha,x}}{\epsilon}.$$
Observe that
\begin{align}
\bigg|\frac{\theta_t(\epsilon)-1}{\epsilon}-2r_t\bigg|=&\frac{\epsilon}{2}\theta''_t(\epsilon')\le\epsilon 4\pi r_t^2,\label{guji}\\
\nonumber\bigg|\frac{\sqrt{\theta_t(\epsilon)}-1}{\epsilon}-r_t\bigg|=&\frac{\epsilon}{2}\bigg|\frac{\theta_t''(\epsilon')\theta_t(\epsilon')-(\theta'_t(\epsilon'))^2/2}{2(\theta_t(\epsilon'))^{3/2}}\bigg|\le\epsilon Cr_t^2,\\
\nonumber\bigg\|\frac{e^{\epsilon P_t}-1}{\epsilon}-P_t\bigg\|=&\frac{\epsilon}{2}\big\|P_t^2e^{\epsilon'P_t}\big\|\le\frac{\epsilon}{2}\|P_t^2\|.
\end{align}
The equation (\ref{res1b2}) can be proved by mimicking the proof of (\ref{res1b1}).

To prove (\ref{hres1b1}), it suffices to notice that
$$|h^\alpha(y)-h^\alpha(x)|\le\sup_{\alpha\in A}|h^\alpha|_{0,1,U}|y-x|,$$
and then apply (\ref{res1b1}).

To prove (\ref{hres1b2}), we notice that
\begin{align*}
\MoveEqLeft\Big|\frac{h^\alpha(y)-h^\alpha(x)}{\epsilon}-h^\alpha_{(\xi)}(x)\Big|\\
\le&\Big|\Big(h^\alpha_x(\lambda x+(1-\lambda)y)-h^\alpha_x(x),\frac{y-x}{\epsilon}\Big)\Big|+\Big|\Big(h^\alpha_x(x),\frac{y-x}{\epsilon}-\xi\Big)\Big|\\
\le&\Big|\Big(h^\alpha_x(\lambda x+(1-\lambda)y)-h^\alpha_x(x),\frac{y-x}{\epsilon}\Big)\Big|(\mathbbm 1_{|y-x|<\delta_n}+\mathbbm 1_{|y-x|\ge\delta_n})\\
&+\sup_{\alpha\in A}|h^\alpha|_{0,1,U}\Big|\frac{y-x}{\epsilon}-\xi\Big|,
\end{align*}
where for each $n\in \mathbb N$, $\delta_n=\delta_n(x)$ is a positive such that
$$\sup_{\alpha\in A}|h^\alpha_x(y)-h^\alpha_x(x)|\le 1/n, \ \forall |y-x|<\delta_n.$$
It follows that
\begin{align*}
\MoveEqLeft\Big|\frac{h^\alpha(y)-h^\alpha(x)}{\epsilon}-h^\alpha_{(\xi)}(x)\Big|\\
\le&\frac{1}{n}\frac{|y-x|}{\epsilon}+\sup_{\alpha\in A}|h^\alpha|_{0,1,U}\bigg(2\frac{|y-x|}{\epsilon}\mathbbm 1_{|y-x|>\delta_n}+\Big|\frac{y-x}{\epsilon}-\xi\Big|\bigg).
\end{align*}
Therefore, for $p\ge1$,
\begin{equation*}
\begin{gathered}
\sup_{\alpha\in\mathfrak A}E\sup_{t\le\gamma^\alpha(\epsilon)\wedge T}\bigg|\frac{h^{\alpha_t}(y_t^{\alpha,x+\epsilon\xi}(\epsilon))-h^{\alpha_t}(x_t^{\alpha,x})}{\epsilon}-h^{\alpha_t}_{(\xi_t^{\alpha,\xi})}(x_t^{\alpha,x})\bigg|^p\\
\le 3^pI_1(\epsilon,n)+3^p\sup_{\alpha\in A}|h^\alpha|^p_{0,1,U}\big(2^pI_2(\epsilon,n)+I_1(\epsilon)\big),
\end{gathered}
\end{equation*}
where
\begin{align*}
I_1(\epsilon,n)=&\frac{1}{n}\sup_{\alpha\in\mathfrak A}E\sup_{t\le\gamma^\alpha(\epsilon)\wedge T}\frac{|y_t^{\alpha,x+\epsilon\xi}-x_t^{\alpha,x}|^p}{\epsilon^p},\\
I_2(\epsilon,n)=&\sup_{\alpha\in\mathfrak A}E\bigg(\sup_{t\le\gamma^\alpha(\epsilon)\wedge T}\frac{|y_t^{\alpha,x+\epsilon\xi}-x_t^{\alpha,x}|^p}{\epsilon^p}\mathbbm1_{|y_t^{\alpha,x+\epsilon\xi}-x_t^{\alpha,x}|>\delta_n}\bigg)\\
\le&\frac{1}{\delta_n^p}\sup_{\alpha\in\mathfrak A}E\sup_{t\le\gamma^\alpha(\epsilon)\wedge T}\frac{|y_t^{\alpha,x+\epsilon\xi}-x_t^{\alpha,x}|^{2p}}{\epsilon^p},\\
I_3(\epsilon)=&\sup_{\alpha\in\mathfrak A}E\sup_{t\le\gamma^\alpha(\epsilon)\wedge T}\Big|\frac{y_t^{\alpha,x+\epsilon\xi}-x_t^{\alpha,x}}{\epsilon}-\xi_t^{\alpha,\xi}\Big|^p.
\end{align*}
By first letting $\epsilon\downarrow0$ and then $n\uparrow\infty$, (\ref{hres1b2}) is verified.
\end{proof}

The next lemma is the second order counterpart of the previous lemma.
\begin{lemma}\label{thm2}
Suppose that the assumptions on $r_t^\alpha$, $\hat r_t^\alpha$, $\pi_t^\alpha$, $\hat\pi_t^\alpha$, $P_t^\alpha$ and $\hat P_t^\alpha$ in Lemma \ref{meaningful} hold. Given any $x\in U$, $\xi,\eta\in \Rd$ and constants $p\in (0,\infty)$, $p'\in[0,p)$, $T\in [1,\infty)$, $x\in D$, $\xi\in\Rd$, $\eta\in\Rd$. We have the following results.

Given stopping times $\gamma^\alpha$ satisfying $\gamma^\alpha\le\tau_U^{\alpha,x}$ for each $\alpha\in\mathfrak A$, we have (\ref{res1a}) and
\begin{equation}\label{res2a}
\sup_{\alpha\in\mathfrak A}E^\alpha_\eta\sup_{t\le\gamma\wedge T}|\eta_t|^p<\infty.
\end{equation}

Let the constant $\epsilon_0\le1$ be sufficiently small so that $B(x,\epsilon_0|\xi|+\epsilon_0^2|\eta|/2)\subset U$. For any $\epsilon\in[-\epsilon_0,\epsilon_0]$, given stopping times $\gamma^\alpha(\epsilon)$ satisfying
$$\gamma^\alpha(\epsilon)\le\tau_U^{\alpha,x}\wedge\hat\tau_U^{\alpha,x+\epsilon\xi+\epsilon^2\eta/2}(\epsilon)\wedge\hat\tau_U^{\alpha,x-\epsilon\xi+\epsilon^2\eta/2}(-\epsilon)$$ 
for each $\alpha\in\mathfrak A$, we have\begin{equation}\label{res2b1}
\lim_{\epsilon\rightarrow0}\sup_{\alpha\in\mathfrak A}E\sup_{t\le\gamma^\alpha(\epsilon)\wedge T}\frac{|z_t^{\alpha,x+\epsilon\xi+\epsilon^2\eta/2}(\epsilon)-x_t^{\alpha,x}|^p}{\epsilon^{p'}}=0,
\end{equation}
\begin{equation}\label{res2b2}
\lim_{\epsilon\rightarrow0}\sup_{\alpha\in\mathfrak A}E\sup_{t\le\gamma^\alpha(\epsilon)\wedge T}\Big|\frac{z_t^{\alpha,x+\epsilon\xi+\epsilon^2\eta/2}(\epsilon)-x_t^{\alpha,x}}{\epsilon}-\xi_t^{\alpha,\xi}\Big|^p=0,
\end{equation}
\begin{equation}\label{res2b3}
\lim_{\epsilon\rightarrow0}\sup_{\alpha\in\mathfrak A}E\sup_{t\le\gamma^\alpha(\epsilon)\wedge T}\bigg|\frac{z_t^{\alpha,x+\epsilon\xi+\epsilon^2\eta/2}(\epsilon)-2x_t^{\alpha,x}+z_t^{\alpha,x-\epsilon\xi+\epsilon^2\eta/2}(-\epsilon)}{\epsilon^2}-\eta_t^{\alpha,\eta}\bigg|^p=0.
\end{equation}
If for each $\alpha\in A$, the function $h^\alpha: \bar U\rightarrow\mathbb R$ is in the class of $C^{0,1}(\bar U)$, and the Lipschitz constants of $h^\alpha$ are uniformly bounded in $\alpha$, then we have
\begin{equation}\label{hres2b1}
\lim_{\epsilon\rightarrow0}\sup_{\alpha\in\mathfrak A}E\sup_{t\le\gamma^\alpha(\epsilon)\wedge T}\frac{|h^{\alpha_t}(z_t^{\alpha,x+\epsilon\xi+\epsilon^2\eta/2}(\epsilon))-h^{\alpha_t}(x_t^{\alpha,x})|^p}{\epsilon^{p'}}=0.
\end{equation}
If furthermore $h^\alpha\in C^1(\bar U)$, and $h_x^\alpha$ are uniformly continuous in $\alpha$, then we have
\begin{equation}\label{hres2b2}
\lim_{\epsilon\rightarrow0}\sup_{\alpha\in\mathfrak A}E\sup_{t\le\gamma^\alpha(\epsilon)\wedge T}\bigg|\frac{h^{\alpha_t}(z_t^{\alpha,x+\epsilon\xi+\ve^2\eta/2}(\epsilon))-h^{\alpha_t}(x_t^{\alpha,x})}{\epsilon}-h^{\alpha_t}_{(\xi_t^{\alpha,\xi})}(x_t^{\alpha,x})\bigg|^p=0.
\end{equation}
If furthermore $h^\alpha\in C^2(\bar U)$, and $h_{xx}^\alpha$ are uniformly continuous in $\alpha$, then we have
\begin{equation}\label{hres2b3}
\begin{gathered}
\lim_{\epsilon\rightarrow0}\sup_{\alpha\in\mathfrak A}E\sup_{t\le\gamma^\alpha(\epsilon)\wedge T}\bigg|\frac{h^{\alpha_t}(z_t^{\alpha,x+\epsilon\xi+\ve^2\eta/2}(\epsilon))-2h^{\alpha_t}(x_t^{\alpha,x})+h^{\alpha_t}(z_t^{\alpha,x-\epsilon\xi+\ve^2\eta/2}(-\epsilon))}{\epsilon^2}\\
-h^{\alpha_t}_{(\xi_t^{\alpha,\xi})(\xi_t^{\alpha,\xi})}(x_t^{\alpha,x})-h^{\alpha_t}_{(\eta_t^{\alpha,\eta})}(x_t^{\alpha,x})\bigg|^p=0.
\end{gathered}
\end{equation}
\end{lemma}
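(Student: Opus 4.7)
The plan is to model the proof on that of Lemma \ref{thm1}, invoking Lemma 3.1 in \cite{quasi-bellman} for suitable pairs of processes and now organized around second-order Taylor expansions in $\epsilon$. As in the proof of Lemma \ref{thm1}, I drop the superscripts when no confusion arises. The moment bound (\ref{res2a}) follows from Lemma 3.1(1) applied to the linear equation (\ref{itoeta}): its coefficients do not involve $\eta$ and are dominated by polynomials in $|\xi_t^{\alpha,\xi}|$ together with the bounded auxiliary processes, and (\ref{res1a}) supplies the $L^p$ moments of $|\xi_t^{\alpha,\xi}|$ needed to bound the effective $M_t^\alpha$ of Lemma 3.1. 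For (\ref{res2b1}) and (\ref{res2b2}) I apply Lemma 3.1(2), just as (\ref{res1b1}) and (\ref{res1b2}) were proved, using the uniform bounds
\begin{align*}
|\sqrt{\hat\theta_t(\epsilon)}-1|&\le N\epsilon\bigl(|r_t|+\epsilon|\hat r_t|+\epsilon r_t^2\bigr),\\
\|e^{\epsilon P_t}e^{\epsilon^2\hat P_t/2}-I\|&\le N\epsilon\bigl(\|P_t\|+\epsilon\|\hat P_t\|+\epsilon\|P_t\|^2\bigr)
\end{align*}
for the crude estimate (\ref{res2b1}), and their Mean Value Theorem counterparts, analogous to (\ref{guji}), for the first-order estimate (\ref{res2b2}).

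The heart of the proof is (\ref{res2b3}). Introduce the symmetric second difference
\[
\zeta_t^\epsilon:=\frac{z_t^{\alpha,x+\epsilon\xi+\epsilon^2\eta/2}(\epsilon)-2x_t^{\alpha,x}+z_t^{\alpha,x-\epsilon\xi+\epsilon^2\eta/2}(-\epsilon)}{\epsilon^2},
\]
so that $\zeta_0^\epsilon=\eta=\eta_0^{\alpha,\eta}$. Since this combination is even in $\epsilon$, all odd-order terms in the expansion cancel. Writing $B_t(\epsilon):=\sqrt{\hat\theta_t(\epsilon)}\,e^{\epsilon P_t}e^{\epsilon^2\hat P_t/2}$, a direct computation yields
\begin{align*}
\tfrac{\hat\theta_t(\epsilon)+\hat\theta_t(-\epsilon)-2}{\epsilon^2}&=2\hat r_t+O(\epsilon^2),\\
\tfrac{B_t(\epsilon)+B_t(-\epsilon)-2I}{\epsilon^2}&=(\hat r_t-r_t^2)I+2r_tP_t+\hat P_t+P_t^2+O(\epsilon),\\
\tfrac{B_t(\epsilon)-B_t(-\epsilon)}{\epsilon}&=2(r_tI+P_t)+O(\epsilon^2),
\end{align*}
with remainders controlled by polynomials in $|r_t|$, $|\hat r_t|$, $\|P_t\|$, $\|\hat P_t\|$. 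Substituting these into the drift $\hat\theta_t(\pm\epsilon)b-B_t(\pm\epsilon)\sigma(\pm\epsilon\pi+(\epsilon^2/2)\hat\pi)$ and the diffusion $B_t(\pm\epsilon)\sigma$ of (\ref{itoz}) and dividing by $\epsilon^2$, one recovers exactly the coefficients of (\ref{itoeta}) plus a residual of the form $\epsilon M_t^\alpha$, where $M_t^\alpha$ is polynomial in $|\xi_t^{\alpha,\xi}|$ and the bounded auxiliary processes. Lemma 3.1(2) in \cite{quasi-bellman} applied to $(\eta_t^{\alpha,\eta},\zeta_t^\epsilon)$, combined with the moment control (\ref{res1a}) for $M_t^\alpha$, then delivers (\ref{res2b3}).

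The composition results (\ref{hres2b1})--(\ref{hres2b3}) follow from Taylor's theorem on $h^\alpha$, combined with (\ref{res2b1})--(\ref{res2b3}), exactly as (\ref{hres1b1}) and (\ref{hres1b2}) were deduced in Lemma \ref{thm1}. For the key second-order version (\ref{hres2b3}), I expand
\[
h^{\alpha_t}(z_t^\pm)=h^{\alpha_t}(x_t)+\bigl(h^{\alpha_t}_x(x_t),z_t^\pm-x_t\bigr)+\tfrac12\bigl(h^{\alpha_t}_{xx}(x_t)(z_t^\pm-x_t),z_t^\pm-x_t\bigr)+R_t^\pm,
\]
where $z_t^\pm:=z_t^{\alpha,x\pm\epsilon\xi+\epsilon^2\eta/2}(\pm\epsilon)$, sum, divide by $\epsilon^2$, and recognize $(z_t^++z_t^--2x_t)/\epsilon^2\to\eta_t^{\alpha,\eta}$ in the linear term via (\ref{res2b3}) and $(z_t^\pm-x_t)/\epsilon\to\pm\xi_t^{\alpha,\xi}$ in the quadratic term via (\ref{res2b2}), yielding the limit $h^{\alpha_t}_{(\eta_t^{\alpha,\eta})}(x_t)+h^{\alpha_t}_{(\xi_t^{\alpha,\xi})(\xi_t^{\alpha,\xi})}(x_t)$; the remainders $R_t^\pm$ are dispatched by the $\delta_n$-indicator splitting of the argument for (\ref{hres1b2}) together with the uniform continuity of $h_{xx}^\alpha$ in $\alpha$. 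The main obstacle is the second-order algebraic bookkeeping for $B_t(\epsilon)$: the cross terms among $\sqrt{\hat\theta_t(\cdot)}$, $e^{\epsilon P_t}$, and $e^{\epsilon^2\hat P_t/2}$ must be combined carefully so that, after cancellation of odd powers of $\epsilon$, every residue is dominated in $L^p$ by $\epsilon$ times a process with $\alpha$-uniform moments, which is precisely what permits Lemma 3.1(2) in \cite{quasi-bellman} to close the comparison.
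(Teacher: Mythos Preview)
Your proposal is correct and follows essentially the same strategy as the paper's own proof: the paper reduces (\ref{res2a}) to the assumptions of Lemma \ref{meaningful} together with (\ref{res1a}), obtains (\ref{res2b1}) and (\ref{res2b2}) by repeating the proofs of (\ref{res1b1}) and (\ref{res1b2}), proves (\ref{res2b3}) by applying Lemma 3.1(2) of \cite{quasi-bellman} to the pair $\eta_t^{\alpha,\eta}$ and the symmetric second difference quotient, and declares (\ref{hres2b1})--(\ref{hres2b3}) to be straightforward extensions of (\ref{hres1b1}) and (\ref{hres1b2}). Your write-up supplies considerably more detail---the explicit second-order expansions of $\hat\theta_t(\epsilon)$ and $B_t(\epsilon)$, the cancellation of odd terms, and the Taylor-plus-$\delta_n$-splitting argument for (\ref{hres2b3})---but the underlying mechanism is identical. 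One small remark: when you refer to ``the bounded auxiliary processes'' in the residual $M_t^\alpha$, note that under the bare hypotheses of Lemma \ref{meaningful} the hatted processes $\hat r_t^\alpha,\hat\pi_t^\alpha,\hat P_t^\alpha$ are only assumed pathwise square-integrable, not bounded; the moment control you need comes, as the paper implicitly uses, from the specific structure that in the applications these are polynomials in $\xi_t^{\alpha,\xi}$ and genuinely bounded factors, so (\ref{res1a}) suffices.
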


\begin{proof}
Again, we may drop superscripts $\alpha$, $\alpha_t$, etc., when this will cause no confusion.

The trueness of inequality (\ref{res2a}) is obvious due to the assumptions on $r_t^\alpha$, $\hat r_t^\alpha$, $\pi_t^\alpha$, $\hat\pi_t^\alpha$, $P_t^\alpha$ and $\hat P_t^\alpha$ given in Lemma \ref{meaningful} and the inequality (\ref{res1a}).

The equations (\ref{res2b1}) and (\ref{res2b2}) can be obtained by repeating the proof of (\ref{res1b1}) and (\ref{res1b2}).

To proof (\ref{res2b3}), we consider the It\^o stochastic equations (3.1) and (3.2) in \cite{MR3047001} with
\begin{align*}
\zeta_t^{\alpha,\zeta}=\eta_t^{\alpha,\eta},\qquad \zeta_t^{\alpha,\zeta(\epsilon)}(\epsilon)=\frac{z_t^{\alpha,x+\epsilon\xi+\epsilon^2\eta/2}(\epsilon)-2x_t^{\alpha,x}+z_t^{\alpha,x-\epsilon\xi+\epsilon^2\eta/2}(-\epsilon)}{\epsilon^2},
\end{align*}
and then mimic the proof of (\ref{res1b2}).

Finally, (\ref{hres2b1})-(\ref{hres2b3}) are nothing but staightforward extensions of (\ref{hres1b1}) and (\ref{hres1b2}).
\end{proof}

We end up this section by showing a convergence result about the stopping times to be applied in the proof of the main theorems.

\begin{lemma}\label{markovtime}
Suppose that Assumption \ref{a1} holds. 
\begin{enumerate}
\item Let $T$ be deterministic time. We have
\begin{equation}\label{tauT}
\lim_{T\uparrow\infty}\sup_{\alpha\in\mathfrak A}E^\alpha_x\big(\tau_D-\tau_D\wedge T\big)=0.
\end{equation}
\item
If (\ref{res1b1}) holds with $p=1$, $p'=0$, $\gamma^\alpha(\epsilon)=\tau_D^{\alpha,x}\wedge\bar \tau_D^{\alpha,x+\epsilon\xi}(\epsilon)$,
for all $T\in[1,\infty)$, then we have
\begin{equation}\label{stopping1}
\lim_{\epsilon\downarrow0}\sup_{\alpha\in\mathfrak A}E\big(\tau^{\alpha,x}_D-\gamma^\alpha(\epsilon)\big)=0.
\end{equation}
\item
If (\ref{res2b1}) hold with $p=1$, $p'=0$, $\gamma^\alpha(\epsilon)=\tau_D^{\alpha,x}\wedge\hat \tau_D^{\alpha,x+\epsilon\xi+\epsilon^2\eta/2}(\epsilon)\wedge \hat \tau_D^{\alpha,x-\epsilon\xi+\epsilon^2\eta/2}(-\epsilon)$,
for all $T\in [1,\infty)$, then we have
\begin{equation}\label{stopping2}
\lim_{\epsilon\downarrow0}\sup_{\alpha\in\mathfrak A}E\big(\tau^{\alpha,x}_D-\gamma^\alpha(\epsilon)\big)=0.
\end{equation}
\end{enumerate}
All of the statements above are still true when replacing $D$ with $D_\delta=\{x\in D: \psi>\delta\}.$
\end{lemma}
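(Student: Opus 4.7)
Statement (1) follows immediately from Lemma \ref{taun}: with $n=2$ one has $\sup_{\alpha \in \mathfrak A} E^\alpha_x \tau_D^2 \le 2 |\psi|_{0,D}\,\psi(x)$, hence
$$E^\alpha_x(\tau_D - \tau_D \wedge T) = E^\alpha_x (\tau_D - T)^+ \le E^\alpha_x[\tau_D \mathbbm 1_{\tau_D > T}] \le \frac{E^\alpha_x \tau_D^2}{T},$$
which is $O(1/T)$ uniformly in $\alpha$.

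For part (2), the guiding observation is that on the event $\{\gamma^\alpha(\epsilon) < \tau^{\alpha,x}\}$ we necessarily have $\gamma(\epsilon) = \bar\tau(\epsilon)$, so the perturbed process $y_t(\epsilon)$ has already reached $\partial D$, forcing $\psi(y_{\bar\tau(\epsilon)}(\epsilon)) = 0$. The plan is to apply Ito's formula to $\psi(x_t)$ on the bounded interval $[\gamma(\epsilon) \wedge T, \tau \wedge T]$: the stochastic integral is a genuine martingale (since $\psi_x \sigma^{\alpha_t}$ is bounded and the stopping times are bounded by $T$), so taking expectations and invoking Assumption \ref{a1} together with $\psi \ge 0$ on $\bar D$ yields
$$E(\tau \wedge T - \gamma(\epsilon) \wedge T) \le E\psi(x_{\gamma(\epsilon) \wedge T}) - E\psi(x_{\tau \wedge T}) \le E\psi(x_{\gamma(\epsilon) \wedge T}).$$
The right-hand side is then dismantled according to the three events $\{\gamma(\epsilon) = \tau \le T\}$ (on which $\psi(x_\tau) = 0$), $\{\gamma(\epsilon) = \bar\tau(\epsilon) < \tau,\ \gamma(\epsilon) \le T\}$ (on which Lipschitz continuity of $\psi$ combined with $\psi(y_{\bar\tau(\epsilon)}(\epsilon)) = 0$ gives $\psi(x_{\bar\tau(\epsilon)}) \le |\psi|_{0,1,D} \sup_{t \le \gamma(\epsilon) \wedge T} |x_t - y_t(\epsilon)|$), and $\{\gamma(\epsilon) > T\}$ (a tail controlled by $|\psi|_{0,D} P^\alpha_x(\tau > T) \le |\psi|_{0,D}^2/T$ via Markov).

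Part (3) follows by the identical argument with $z_t^{\alpha, x \pm \epsilon\xi + \epsilon^2\eta/2}(\pm \epsilon)$ replacing $y_t^{\alpha, x+\epsilon\xi}(\epsilon)$ and hypothesis (\ref{res2b1}) replacing (\ref{res1b1}); on $\{\gamma(\epsilon) < \tau\}$ at least one of the two perturbed processes has exited $D$, so the Lipschitz bound applies after taking the larger of the two distances into the sup. Extending from $D$ to $D_\delta = \{\psi > \delta\}$ is automatic: the function $\tilde\psi := \psi - \delta$ is positive in $D_\delta$, vanishes on $\partial D_\delta$, and $L^\alpha \tilde\psi = L^\alpha \psi \le -1$, so Lemma \ref{taun} and all of the above transfer verbatim with $\tilde\psi$ in place of $\psi$. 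I expect the only technical nuisance to be careful accounting of the truncation at $T$, which is forced on us because the hypotheses (\ref{res1b1}) and (\ref{res2b1}) only furnish $L^1$-control of the supremum up to $\gamma(\epsilon) \wedge T$; this is dispatched by the telescoping decomposition
$$E(\tau - \gamma(\epsilon)) \le E(\tau - \tau \wedge T) + E(\tau \wedge T - \gamma(\epsilon) \wedge T)$$
(the remaining telescoping term $E(\gamma(\epsilon) - \gamma(\epsilon) \wedge T)$ being nonnegative and subtracted), followed by letting $\epsilon \downarrow 0$ first and $T \uparrow \infty$ second, using part (1) for the first summand on the right.
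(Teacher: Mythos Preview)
Your proposal is correct and follows essentially the same approach as the paper. The only cosmetic differences are that for (1) you invoke the second-moment bound from Lemma~\ref{taun} while the paper rederives a first-moment tail bound via $E^\alpha_x\int_{\tau\wedge T}^{\tau}(-L^\alpha\psi)(x_t)\,dt = E^\alpha_x\psi(x_{\tau\wedge T})\mathbbm1_{\tau>T}$, and for (2)--(3) you truncate the stopping times at $T$ \emph{before} applying It\^o/optional stopping and then telescope, whereas the paper applies optional stopping directly over $[\gamma^\alpha,\tau^{\alpha,x}]$ (legitimate since $\psi_x\sigma$ is bounded and $E\tau<\infty$) and only afterwards splits according to $\{\tau\le T\}$ versus $\{\tau>T\}$; the remaining steps---using $\psi(y_{\bar\tau(\epsilon)}(\epsilon))=0$ plus Lipschitz continuity of $\psi$, the $D\rightsquigarrow D_\delta$ transfer via $\psi-\delta$, and the order $\epsilon\downarrow0$ then $T\uparrow\infty$---match exactly.
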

\begin{proof}
We drop the subscript $D$ and the argument $\epsilon$ for simplicity of the notation. 

We first observe that, for each $\alpha\in\mathfrak A$,
\begin{align*}
E^\alpha_x(\tau-\tau\wedge T)\le&-E^\alpha_x\int_{\tau\wedge T}^{\tau}L^\alpha\psi(x_t) dt=E_x^\alpha\psi(x_{\tau\wedge T})\mathbbm1_{\tau>T}\le|\psi|_{0,D}\frac{E^\alpha_x\tau}{T},
\end{align*}
which implies (\ref{tauT}). 


Next, notice that, for any $\alpha\in\mathfrak A$,
\allowdisplaybreaks\begin{align*}
E\big(\tau^{\alpha,x}-\gamma^\alpha\big)\le&-E\int_{\gamma^\alpha}^{\tau^{\alpha,x}}L^\alpha\psi(x_t^{\alpha,x})dt\\
=&E\Big(\psi\big(x_{\bar\tau^{\alpha,x+ \epsilon\xi}}^{\alpha,x}\big)-\psi\big(y_{\bar\tau^{\alpha,x+ \epsilon\xi}}^{\alpha,x+ \epsilon\xi}\big)\Big)
\mathbbm1_{\bar\tau^{\alpha,x+ \epsilon\xi}<\tau^{\alpha,x}}\\
\le&E\Big(\psi\big(x_{\bar\tau^{\alpha,x+ \epsilon\xi}}^{\alpha,x}\big)-\psi\big(y_{\bar\tau^{\alpha,x+ \epsilon\xi}}^{\alpha,x+ \epsilon\xi}\big)\Big)
\mathbbm1_{\bar\tau^{\alpha,x+ \epsilon\xi}<\tau^{\alpha,x}\le T}+2|\psi|_{0,D}\frac{E^\alpha_x\tau}{T}.
\end{align*}
Due to (\ref{hres1b1}), we have
\begin{align*}
\lim_{T\uparrow0}\lim_{\epsilon\downarrow0}\sup_{\alpha\in\mathfrak A}E(\tau^{\alpha,x}_D-\tau^{\alpha,x}_D\wedge\bar\tau^{\alpha,x+\epsilon\xi}_D(\epsilon))=0.
\end{align*}

To prove (\ref{stopping2}), we just need to notice that for any stopping times $\tau$, $\tau_1$ and $\tau_2$, we have
$$\tau-\tau\wedge\tau_1\wedge\tau_2=(\tau-\tau\wedge\tau_1)I_{\tau_1<\tau_2}+(\tau-\tau\wedge\tau_2)I_{\tau_1\ge\tau_2}.$$

For the conclusions when the domain is $D_\delta$, it suffices to repeat the proof with $\psi$ replaced with $\psi-\delta$.


\end{proof}

\section{Construction of barriers and quasiderivatives}\label{sub4}

For constants $\delta$ and $\lambda$ satisfying $0<\delta<\lambda$, define
\begin{equation*}
\begin{gathered}
D_\delta=\{x\in D:\delta<\psi(x)\},\qquad D^\lambda=\{x\in D: \psi(x)<\lambda\},\\
D_\delta^\lambda=\{x\in D: \delta<\psi(x)<\lambda\}.
\end{gathered}
\end{equation*}
Here we construct two barriers. The boundary barrier $\B_1(x,\xi)$ is defined on $D_\delta^\lambda\times\Rd$, and the interior barrier $\B_2(x,\xi)$ is defined on  $D_{\lambda^2}\times\Rd$, where $\lambda\in(0,1)$ is a sufficiently small constant throughout this article which will be determined in the proof of Lemmas \ref{l1}, \ref{l2} and \ref{l3}, and $\delta$ is an arbitrary constant in the interval $(0,\lambda^2)$ in this section, which will approach zero in the proof of Theorems \ref{t2} and \ref{t3}. We construct the barriers and quasiderivatives in such a way that $\B_1(x_t^{\alpha,x},\xi_t^{\alpha,\xi})$ and $\B_2(x_t^{\alpha,x},\xi_t^{\alpha,\xi})$ are local supermartingales.

Due to Assumption \ref{a1}, we suppose that
$$4(\|\sigma\|^2_{0,A}+|b|^2_{0,A})\le-\sup_{\alpha\in A}L^\alpha\psi(x),\ \forall x\in D,$$
by replacing $\psi(x)$ with $4(\|\sigma\|^2_{0,A}+|b|^2_{0,A})\psi(x)$.

\begin{lemma}\label{l1}

In $D_\delta^\lambda\times\Rd$, let
$$
\mathrm{B}_1(x,\xi)=\gamma\bigg[\beta|\xi|^2+\pxsop\bigg],
$$
with
$$\beta=1+\frac{1}{8\lambda}\psi\Big(1-\frac{1}{4\lambda}\psi\Big),\qquad\gamma=\lambda^2+\psi\Big(1-\frac{1}{4\lambda}\psi\Big).$$

For each $\alpha$, we define the first and second quasiderivatives by (\ref{itoxi}) and (\ref{itoeta}), in which 
\begin{itemize}
\item $r_t^\alpha:=r(x_t^{\alpha,x},\xi_t^{\alpha,\xi})$, where $\displaystyle r(x,\xi):=\rho(x,\xi)+\frac{\psi_{(\xi)}}{\psi},$ 
with $\displaystyle\rho(x,\xi):=-\frac{1}{|\psi_x|^2}\psi_{x^k}(\psi_{x^k})_{(\xi)}$; $\hat{r}_t^\alpha:=\hat{r}(x_t^{\alpha,x},\xi_t^{\alpha,\xi})$, where $\displaystyle\hat{r}(x,\xi):=\pxsops$;
\item$\pi_t^\alpha:=\pi(x_t^{\alpha,x},\xi_t^{\alpha,\xi})$, where $\displaystyle\pi^k(x,\xi):=\frac{1}{2\gamma}\Big(1-\frac{\psi}{2\lambda}\Big)\bigg[\frac{\psi_{(\xi)}}{\psi}\psi_{(\sigma_k)}+\beta(\xi,\sigma_k)\bigg],\forall  k=1,...,d_1$; $\displaystyle\hat\pi_t^\alpha:=-2\pi^\alpha_t\tilde\xi_t^{\alpha,0}=-2\pi^\alpha_t\int_0^t\pi_s^{\alpha}dw_s$;
\item$P_t^\alpha:=P(x_t^{\alpha,x},\xi_t^{\alpha,\xi})$, where $\displaystyle P_{jk}(x,\xi):=\frac{(\psi_{x^j})_{(\xi)}\psi_{x^k}-(\psi_{x^k})_{(\xi)}\psi_{x^j}}{|\psi_x|^2}$, $\forall j,k=1,...,d$; $\hat P_t^\alpha:=0.$
\end{itemize}

Let $\tau_1^\delta=\tau^{\alpha,x}_{D^\lambda_\delta}$. When the constant $\lambda$ is sufficiently small, for all $x\in D^\lambda_\delta$ and $\xi\in\Rd$, we have
\begin{enumerate}
\item For each $\alpha\in\mathfrak A$, $\mathrm{B}^\kappa_1(x^{\alpha,x}_t,\xi^{\alpha,\xi}_t)$ is a local supermartingale on $[0,\tau_1^\delta]$ for each $\kappa\in[0,1+\kappa_1]$, where $\kappa_1=\kappa_1(K_0,d,d_0,D,\lambda,\delta)$ is a sufficiently small positive constant;
\item $\displaystyle{\sup_{\alpha\in\mathfrak A}E^\alpha_{x,\xi}\int_0^{\tau_1^\delta}\Big(|\xi_t|^2+\pxtsops\Big) dt\le N\mathrm{B}_1(x,\xi)}$;
\item $\displaystyle{\sup_{\alpha\in\mathfrak A}E^\alpha_{\xi}\sup_{t\le\tau_1^\delta}|\xi_t|^2\le N\mathrm{B}_1(x,\xi)}$;
\item $\displaystyle{\sup_{\alpha\in\mathfrak A}E^\alpha_{0}\sup_{t\le\tau_1^\delta}|\tilde\xi_t|^2\le N\mathrm{B}_1(x,\xi)}$;
\item $\displaystyle{\sup_{\alpha\in\mathfrak A}E^\alpha_{0}\sup_{t\le\tau_1^\delta}|\eta_t|\le N\mathrm{B}_1(x,\xi)}$;
\item $\displaystyle{\sup_{\alpha\in\mathfrak A}E^\alpha_{0}\Big(\int_0^{\tau_1^\delta}|\eta_t|^2 dt\Big)^{1/2}\le N\mathrm{B}_1(x,\xi)}$; 
\end{enumerate}
where $N$ is a constant depending on $K_0,d,d_1,D$ and $\lambda$.
\end{lemma}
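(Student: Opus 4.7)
The overall strategy is: (a) apply It\^o's formula to $\mathrm{B}_1(x_t^{\alpha,x},\xi_t^{\alpha,\xi})$ and verify that the drift $\mathcal L\mathrm{B}_1$ is non-positive, so that $\mathrm{B}_1(x_t,\xi_t)$ is a local supermartingale on $[0,\tau_1^\delta]$; (b) upgrade to $\mathrm{B}_1^\kappa$ for $\kappa$ slightly above $1$; (c) extract the moment bounds (2)--(6) from the supermartingale property together with Doob's and Burkholder--Davis--Gundy inequalities.

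For (a), I would write $\mathrm{B}_1=\gamma\bigl(\beta|\xi|^2+\psi_{(\xi)}^2/\psi\bigr)$ and apply It\^o to the pair $(x_t,\xi_t)$ with SDEs~(\ref{itox}) and~(\ref{itoxi}). The drift splits into: the $x$-part, which is $L^\alpha$ applied to $\mathrm{B}_1$ with $\xi$ frozen; cross quadratic variation contributions of the form $2(a^\alpha(rI+P)^*)^{ij}B_{x^i\xi^j}$; and the $\xi$-Hessian term against $(rI+P)a^\alpha(rI+P)^*$ together with the first-order piece $(2rb-\sigma\pi)\cdot B_{\xi}$. Assumption~\ref{a2} enters through the skew $P$: since $e^{\epsilon P}$ is orthogonal, orthogonal invariance of $\mathbb A$ lets us treat the $P$-rotated diffusion as coming from another $a^{\alpha'}\in\mathbb A$, which provides the freedom to absorb otherwise-uncontrolled terms. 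The choice $r=\rho+\psi_{(\xi)}/\psi$ is designed so that the $1/\psi$ singular contributions produced by $L^\alpha(\psi_{(\xi)}^2/\psi)$ collect into a quadratic form in $(\xi,\psi_{(\xi)}/\psi)$, and the explicit $\pi$ is chosen precisely so that the Girsanov-type term $-\sigma\pi\cdot B_\xi$ cancels the off-diagonal part of that form. What remains is proportional to $(\beta|\xi|^2+\psi_{(\xi)}^2/\psi)\cdot L^\alpha\psi/\gamma$, which is strictly negative by Assumption~\ref{a1} after the rescaling of $\psi$ noted at the start of the section; $O(\lambda)$ errors from differentiating $\beta$ and $\gamma$ in $x$ are absorbed by choosing $\lambda$ small.

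For (b), $\kappa\in[0,1]$ is immediate by concavity (It\^o--Jensen). For $\kappa\in(1,1+\kappa_1]$, applying It\^o to $\mathrm{B}_1^\kappa$ adds an extra term $\tfrac12\kappa(\kappa-1)\mathrm{B}_1^{\kappa-2}d\langle \mathrm{B}_1\rangle_t$; a direct computation yields $d\langle \mathrm{B}_1\rangle_t\le N\mathrm{B}_1\cdot(|\xi|^2+\psi_{(\xi)}^2/\psi^2)\,dt$, so this correction is dominated by the strictly negative main drift provided $\kappa_1$ is small enough, giving~(1). For (c), item~(2) follows by taking expectation in the supermartingale identity and using $\mathcal L\mathrm{B}_1\le -N^{-1}(|\xi|^2+\psi_{(\xi)}^2/\psi^2)$ from~(a). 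Item~(3) is Doob applied to the supermartingale $\mathrm{B}_1(x_t,\xi_t)$, since $|\xi|^2\le N\mathrm{B}_1$. Item~(4) uses BDG on $\tilde\xi_t=\int_0^t\pi_s\,dw_s$ together with the pointwise bound $|\pi|^2\le N(|\xi|^2+\psi_{(\xi)}^2/\psi^2)$, which reduces to~(2). For~(5) and~(6), the SDE~(\ref{itoeta}) for $\eta_t$ has drift and diffusion coefficients at most quadratic in $(r,P,\pi,\hat r)$ and linear in $\hat\pi=-2\pi_t\tilde\xi_t$; applying BDG to the local martingale part and an $L^1$ bound to the drift, both controlled by~(2)--(4), yields both items.

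The principal obstacle is the sign verification in step~(a). While the role of each of $r,\pi,P,\beta,\gamma$ is clear, the actual cancellation is a meticulous term-by-term It\^o calculation, and the interplay between the skew rotation (using Assumption~\ref{a2}) and the Girsanov correction $-\sigma\pi$ must be tracked simultaneously. The specific forms $\beta=1+\tfrac{1}{8\lambda}\psi(1-\tfrac{1}{4\lambda}\psi)$ and $\gamma=\lambda^2+\psi(1-\tfrac{1}{4\lambda}\psi)$ are tailored so that the cross contributions from $x$-differentiating these factors convert the residual terms into a negative semidefinite quadratic form in $(\xi,\psi_{(\xi)}/\psi)$, provided $\lambda$ is chosen sufficiently small.
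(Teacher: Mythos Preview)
Your overall outline (It\^o on $\mathrm{B}_1$, sign of the drift, then moment bounds) is the right one, and matches the paper's architecture. But two points in your proposal are genuinely off.

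\textbf{The role of $P$ and Assumption~\ref{a2}.} You say Assumption~\ref{a2} enters here through the skew matrix $P$, letting you replace $a^\alpha$ by some $a^{\alpha'}$. That is not how this lemma works: Assumption~\ref{a2} is \emph{not used at all} in the proof of Lemma~\ref{l1}. The purpose of $P$ (together with $\rho$) is the pointwise algebraic identity
\[
\psi_{(\xi)(\sigma_k)}+\rho\,\psi_{(\sigma_k)}+\psi_{(P\sigma_k)}=0,
\]
which one verifies directly from the definitions of $\rho$ and $P$. This identity collapses the martingale part of $d\psi_{(\xi_t)}$ to $(\psi_{(\xi_t)}/\psi)\psi_{(\sigma_k)}\,dw^k_t$, and that simplification is what makes the subsequent term-by-term estimate of the drift $\Gamma_1$ close. (Assumption~\ref{a2} is invoked only later, in Section~\ref{sub5}, to justify the representation formula~(\ref{vxplusex}) with the rotated diffusion $e^{\epsilon P}\sigma^\alpha$.) Without identifying this cancellation, your description of step~(a) does not actually explain why the drift is negative; the paper's proof decomposes $\Gamma_1=I_1+I_2+I_3+\gamma(J_1+\cdots+J_4)$ and estimates each piece, with the choice of $\pi$ forcing $I_2\le 0$ exactly.

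\textbf{Item~(3) via Doob.} You propose to get (3) by ``Doob applied to the supermartingale $\mathrm{B}_1(x_t,\xi_t)$''. Doob's maximal inequality for nonnegative supermartingales is only weak-type; to upgrade to an $L^1$ bound on the supremum you would need the $\kappa>1$ supermartingale from~(1), and then your constant $N$ would depend on $\kappa_1$, which in turn depends on $\delta$. But the lemma requires $N$ independent of $\delta$ in (2)--(6). The paper instead applies the Davis/BDG inequality directly to the semimartingale $|\xi_t|^2$ and reduces to (2), which keeps $N$ free of $\delta$. For (5)--(6) the paper also does not use a bare BDG on $\eta_t$: it applies It\^o to $e^{2\gamma(x_t)}|\eta_t|^2$, so that $L^\alpha\gamma\le -1/2$ manufactures a $-|\eta_t|^2$ term in the drift which absorbs the cross terms, and then uses a Lenglart-type bound (Krylov's Theorem~III.6.8) to pass to the supremum.
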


\begin{proof}

We drop the superscript $\alpha$ throughout the proof. We may drop the argument $x$ or $x_t$ when this will cause no confusion. Also, keep in mind that the constant $K\in[1,\infty)$ depends only on $K_0,d,d_1,D$, while the constant $N\in[1,\infty)$ depends on $K_0,d,d_1,D$ and $\lambda$.

We first notice that (\ref{domain}), there exists a small positive constant $\mu$ depending on the domain $D$, such that $|\psi_x|\ge1/2$ in $D^\mu$. By choosing $\lambda$ small than $\mu$, we may assume that $|\psi_x|\ge1/2$ in $D^\lambda$.

By It\^o's formula,  we have
\begin{align*}
d\pxt=&\Big[\psi_{(\xi_t)(\sigma_k)}+ r_t\psk+\psi_{(P_t\sigma_k)}\Big]dw_t^k\\
+&\Big[(L\psi)_{(\xi_t)}+2 r_tL\psi-\psi_{(\sigma\pi)}+\sum_k(\psi_{xx}\sigma_k,P_t\sigma_k)\Big]dt.
\end{align*}
Due to our choices of $r$ and $P$, we have
$$\sum_k(\psi_{xx}\sigma_k,P\sigma_k)=\operatorname{tr}(\sigma\sigma^*\psi_{xx}P)=0,$$
\begin{equation}\label{pproperty}
\psi_{(\xi)(\sigma_k)}+ \rho\psi_{(\sigma_k)}+\psi_{(P\sigma_k)}=0.
\end{equation}
Thus
\begin{equation}
d\pxt=\frac{\pxt}{\psi}\psk dw_t^k+\Big[(L\psi)_{(\xi_t)}+2 r_tL\psi-\psi_{(\sigma\pi_t)}\Big]dt.\label{4l}
\end{equation}

Let  $\bar\sigma:=r\sigma+P\sigma$ and $\bar b:=2 r b$.
Again, by It\^o's formula,
\begin{equation*}
d\mathrm{B}_1(x_t, \xi_t)=\Gamma_1(x_t, \xi_t)dt+\Lambda_1(x_t,\xi_t) dw_t,
\end{equation*}
with
\allowdisplaybreaks\begin{align*}
\Gamma_1(x, \xi)=&\Big(\beta|\xi|^2+\pxsop\Big)L\gamma\\
&+\gamma\bigg\{|\xi|^2L\beta+\beta\Big[2(\xi,\bar b-\sigma\pi)+\|\bar\sigma\|^2\Big]+2\beta_{(\sigma_k)}(\xi,\bar\sigma_k)\\
&+2\pxop\Big[(L\psi)_{(\xi)}+2\rho L\psi\Big]+3\pxsops L\psi-2\pxop\psi_{(\sigma\pi)}\bigg\}\\
&+\gamma_{(\sigma_k)}\bigg\{2\beta(\xi,\bar\sigma_k)+\beta_{(\sigma_k)}|\xi|^2+\pxsops\psk\bigg\}\\
=&I_1+I_2+I_3+\gamma(J_1+J_2+J_3+J_4),
\end{align*}
where
\allowdisplaybreaks\begin{align*}
I_1=&\Big(\beta|\xi|^2+\pxsop\Big)L\gamma,\\
I_2=&-2\gamma\beta(\xi,\sigma\pi)-2\gamma\pxop\psi_{(\sigma\pi)}+\gamma_{(\sigma_k)}\Big[2\beta(\xi,\sigma_k)\pxop+\pxsops\psk\Big],\\
I_3=&\gamma_{(\sigma_k)}\bigg\{2\beta(\xi,\rho\sigma_k+P\sigma_k)+\beta_{(\sigma_k)}|\xi|^2\bigg\},\\
J_1=&|\xi|^2L\beta+3\pxsops L\psi,\qquad J_2=\beta\Big[2(\xi,\bar b)+\|\bar\sigma\|^2\Big],\\
J_3=&2\beta_{(\sigma_k)}(\xi,\bar\sigma_k),\qquad J_4=2\pxop\Big[(L\psi)_{(\xi)}+2\rho L\psi\Big].
\end{align*}
In order that $B_1(x_t,\xi_t)$ is a local supermartingale, we need that $\Gamma_1(x,\xi)\le 0$. To this end, we estimate from $I_1$ to $J_4$ term by term:
\begin{align*}\allowdisplaybreaks
I_1\le&-\frac{1}{2}|L\psi||\xi|^2-\frac{1}{4\lambda}\psk^2|\xi|^2,\\
I_2
=&-\Big(1-\frac{\psi}{2\lambda}\Big)\beta^2\psk^2\le0,\\
I_3\le&|\psk|\Big(K|\xi|^2+\frac{1}{8\lambda}|\psk||\xi|^2\Big)\\
\le&\Big(4\lambda K^2+\frac{1}{8\lambda}\psk^2\Big)|\xi|^2+\frac{1}{8\lambda}\psk^2|\xi|^2\\
\le&\lambda K|\xi|^2+\frac{1}{4\lambda}\psk^2|\xi|^2,\\
J_1\le&\frac{1}{8\lambda}\Big(\frac{1}{2}L\psi-\frac{1}{4\lambda}\psk^2\Big)|\xi|^2+3\pxsops L\psi\\
=&-\frac{1}{16\lambda}|L\psi||\xi|^2-\frac{1}{32\lambda^2}\psk^2|\xi|^2-3\pxsops|L\psi|,\\
J_2\le&2\bigg[2|\xi|\Big(K|\xi|+2\frac{|\psi_{(\xi)}|}{\psi}|b|\Big)+\sum_k\Big(K|\xi|+\frac{|\psi_{(\xi)}|}{\psi}|\sigma_k|\Big)^2\bigg]\\
\le&K|\xi|^2+\pxsops|b|^2+4\pxsops\|\sigma\|^2\\
\le&K|\xi|^2+\pxsops|L\psi|,\\
J_3\le&2\frac{1}{8\lambda}\sum_k|\psk||\xi|\Big(K|\xi|+\frac{|\psi_{(\xi)}|}{\psi}|\sigma_k|\Big)\\
\le&\frac{1}{4\lambda}\bigg[(1/8+K\psk^2)|\xi|^2+\frac{1}{16\lambda}\psk^2|\xi|^2+8\lambda\pxsops\|\sigma\|^2\bigg]\\
\le&\frac{1}{32\lambda}|\xi|^2+\frac{16\lambda K+1}{64\lambda^2}\psk^2|\xi|^2+\frac{1}{2}\pxsops|L\psi|,\\
J_4\le&2\frac{|\psi_{(\xi)}|}{\psi}K|\xi|\le K|\xi|^2+\pxsops.
\end{align*}
Collecting our estimates above we see that, for all $(x,\xi)\in D_\delta^\lambda\times\Rd$,
\begin{align*}
\Gamma_1(x,\xi)\le& \Big(-\frac{1}{2}|L\psi|+\lambda K\Big)|\xi|^2+\gamma\bigg[\Big(-\frac{1}{16\lambda}|L\psi|+K+\frac{1}{32\lambda}\Big)|\xi|^2\\
&+\Big(-\frac{1}{32\lambda^2}+\frac{16\lambda K+1}{64\lambda^2}\Big)\psk^2|\xi|^2+\Big(-\frac{3}{2}|L\psi|+1\Big)\pxsops\bigg]
\end{align*}
By choosing sufficiently small positive $\lambda$, we get
\begin{equation}\label{Gamma1}
\Gamma_1(x,\xi)\le -(1/4)|\xi|^2-(\gamma/2)\pxsops\le-(1/4)|\xi|^2-(\lambda^2/2)\pxsops.
\end{equation}
It follows that $\mathrm{B}_1(x_t,\xi_t)$ is a local supermartingale on $[0,\tau_1^\delta]$.

For each $\kappa<1$, $\B_1^\kappa(x_t,\xi_t)$ is a local supermartingale since the power function $x^\kappa$ is concave. If $\kappa>1$, by It\^o's formula, we have
\begin{align*}
d\B_1^\kappa(x_t,\xi_t)=&\kappa \B_1^{\kappa-1}(x_t,\xi_t)\Lambda_1(x_t,\xi_t)dw_t+\Delta_1(x_t,\xi_t)dt,
\end{align*}
where
\begin{align*}
\Delta_1(x,\xi)=\kappa \B_1^{\kappa-1}(x,\xi)\Gamma_1(x,\xi)+\frac{\kappa(\kappa-1)}{2}\B_1^{\kappa-2}(x,\xi)\|\Lambda(x,\xi)\|^2.
\end{align*}
Notice that
\begin{align*}
\kappa \B_1^{\kappa-1}(x,\xi)\Gamma_1(x,\xi)\le&-(\lambda^{2\kappa-2}/4)|\xi|^{2\kappa},\\
\frac{\kappa(\kappa-1)}{2}\B_1^{\kappa-2}(x,\xi)\|\Lambda_1(x,\xi)\|^2\le&(\kappa-1)C(K,\lambda,\delta)|\xi|^{2\kappa},
\end{align*}
therefore $\Delta_1(x,\xi)<0$ when $\kappa-1$ is sufficiently small. Thus (1) is proved.

From (\ref{Gamma1}), by letting $\lambda_0=\lambda^2/2$, we have
$$
\Gamma_1(x,\xi)+\lambda_0\Big(|\xi|^2+\pxsops\Big)\le 0,\qquad\forall (x,\xi)\in D_\delta^\lambda\times\Rd.
$$
Therefore, 
\begin{align*}
\lambda_0 E\int_0^{\tau_1^\delta}\Big(|\xi_t|^2+\pxtsops \Big)dt\le&-E\int_0^{\tau_1^\delta}\Gamma_1(x_t,\xi_t)dt\le\mathrm{B}_1(x,\xi),
\end{align*}
which proves (2).

To show (3), by Davis inequality, for $\tau_n=\tau_1^\delta\wedge\inf\{t\ge0:|\xi_t|\ge n\}$,
\begin{align*}
E\sup_{t\le\tau_n}|\xi_t|^2\le& |\xi|^2+\int_0^{\tau_n}\Big(2|\xi_t|\cdot|\bar b_t-\sigma\pi_t|+\|\bar\sigma\|^2\Big)dt+6E\Big(\int_0^{\tau_n}|(\xi_t,\bar\sigma_t)|^2dt\Big)^{\frac{1}{2}}\\
\le&|\xi|^2+NE\int_0^{\tau_n}\Big(|\xi_t|^2+\frac{\psi_{(\xi_t)}^2}{\psi^2}\Big)dt+E\bigg(\int_0^{\tau_n} N|\xi_t|^2\Big(|\xi_t|^2+\frac{\psi_{(\xi_t)}^2}{\psi^2}\Big)dt\bigg)^{\frac{1}{2}}\\
\le&N\mathrm{B}_1(x,\xi)+E\bigg[\sup_{t\le\tau_n}|\xi_t|\Big(\int_0^{\tau_n} N\Big(|\xi_t|^2+\frac{\psi_{(\xi_t)}^2}{\psi^2}\Big)dt\Big)^{\frac{1}{2}}\bigg]\\
\le&N\mathrm{B}_1(x,\xi)+\frac{1}{2}E\sup_{t\le\tau_n}|\xi_t|^2,
\end{align*}
which implies that
$$
E\sup_{t\le\tau_n}|\xi_t|^2\le N\mathrm{B}_1(x,\xi).
$$
Now  (3) is obtained by first letting $n\rightarrow\infty$ and then taking the supremum with respect to $\alpha$.

To show (4) it suffices to notice that
$$E\sup_{t\le\tau_1^\delta}|\tilde\xi_t|^2\le 4 E\int_0^{\tau_1^\delta}|\pi_t|^2dt\le NE\int_0^{\tau_1^\delta} \Big(|\xi_t|^2+\frac{\psi_{(\xi_t)}^2}{\psi^2}\Big)dt.$$

Now we estimate the moment of the second quasiderivative $\eta_t$. Based on our definition, we have
$$
d\eta_t=G_tdw_t+H_tdt,
$$
with
\begin{equation*}
\|G_t\|\le N|\xi_t|\bigg(|\xi_t|+\frac{|\psi_{(\xi_t)}|}{\psi}\bigg),\qquad|H_t|\le N\bigg(|\xi_t|^2+\pxtsops+|\hat\pi_t|\bigg).
\end{equation*}
Let $\gamma_t=\gamma(x_t)$. By It\^o's formula we have
$$d\big(e^{2\gamma_t}|\eta_t|^2\big)=\Upsilon(x_t,\xi_t,\eta_t)dw_t+\Theta(x_t,\xi_t,\eta_t)dt,$$
where
\begin{align*}
\Theta(x,\xi,\eta)=&e^{2\gamma_t}\bigg\{2|\eta|^2\Big[\Big(1-\frac{\psi}{2\lambda}\Big)L\psi-\frac{1}{4\lambda}\psk^2+\Big(1-\frac{\psi}{2\lambda}\Big)^2\psk^2\Big]\\
&+2(\eta,H)+\|G\|^2+4(\eta,G)\psk\Big(1-\frac{\psi}{2\lambda}\Big)\bigg\}\\
\le&e^{2\gamma_t}\bigg[-|\eta|^2+N\big(|\eta|+|\xi|^2\big)\bigg(|\xi|^2+\pxsops+|\hat\pi|\bigg)\bigg].
\end{align*}
Then for any bounded stopping time $\tau$ we have
$$E\big(e^{2\gamma_\tau}|\eta_\tau|^2\big)+E\int_0^\tau|\eta_t|^2dt\le NE\int_0^\tau e^{2\gamma_t}\big(|\eta_t|+|\xi_t|^2\big)\bigg(|\xi_t|^2+\pxtsops+|\hat\pi_t|\bigg)dt.$$
Let $\tau_n=\tau_1^\delta\wedge\inf\{t\ge0:e^{\gamma_t}|\eta_t|\ge n\}$. Recall that $\eta=0$. By Theorem III.6.8 in \cite{MR1311478}, we have
\begin{align*}
E\sup_{t\le\tau_n}(e^\gamma|\eta_t|)\le&3E\bigg[\int_0^{\tau_n}N e^{2\gamma_t}\big(|\eta_t|+|\xi_t|^2\big)\bigg(|\xi_t|^2+\pxtsops+|\hat\pi_t|\bigg)dt\bigg]^{1/2}\\
\le&NE\bigg\{\sup_{t\le\tau_n}\Big[e^{\gamma_t}(|\eta_t|^{1/2}+|\xi_t|)\Big]\bigg[\int_0^{\tau_n}\Big(|\xi_t|^2+\pxtsops+|\hat\pi_t|\Big)dt\bigg]^{1/2}\bigg\}\\
\le&\frac{1}{2}E\sup_{t\le\tau_n}(e^{\gamma_t}|\eta_t|)+NE\sup_{t\le\tau_n}|\xi_t|^2+NE\int_0^{\tau_n}\Big(|\xi_t|^2+\pxtsops\Big)dt\\
&+2E\bigg(\sup_{t\le\tau_n}|\tilde\xi_t|\cdot\int_0^{\tau_n}|\pi_t|dt\bigg)\\
\le&\frac{1}{2}E\sup_{t\le\tau_n}(e^{\gamma_t}|\eta_t|)+NB_1(x,\xi).
\end{align*}
It follows that
\begin{align*}
E\sup_{t\le\tau_n}|\eta_t|\le E\sup_{t\le\tau_n}(e^{\gamma_t}|\eta_t|)\le &N\B_1(x,\xi),\\
E\bigg(\int_0^{\tau_n}|\eta_t|^2dt\bigg)^{1/2}\le &N\B_1(x,\xi).
\end{align*}
Letting $n\rightarrow\infty$ and then taking the supremum over $\mathfrak A$, (5) and (6) are proved.
\end{proof}

\begin{lemma} \label{l2}
In $D_{\lambda^2}\times\Rd$, let
$$
\mathrm{B}_2(x,\xi)=\lambda^{3\theta}\psi^{1-2\theta}\bigg[K_1|\xi|^2+\pxsop\bigg],
$$
where $\theta\in(0,1/3)$ and $K_1\in[1,\infty)$ are constants depending on $K_0$,$d,d_1$,$D$, to be determined in the proof.

For each $\alpha$, we define the first and second quasiderivatives by (\ref{itoxi}) and (\ref{itoeta}), in which 
\begin{itemize}
\item $r_t^\alpha:=r(x_t^{\alpha,x},\xi_t^{\alpha,\xi})$, where $\displaystyle r(x,\xi):=\theta\frac{\psi_{(\xi)}}{\psi}$; $\hat{r}_t^\alpha:=0$;
\item$\pi_t^\alpha:=\pi(x_t^{\alpha,x},\xi_t^{\alpha,\xi})$, where $\displaystyle\pi^k(x,\xi):=\frac{\nu}{\psi^2}\bigg[K_1\psi(\xi,\sigma_k)+\psi_{(\xi)}\psi_{(\sigma_k)}\bigg],\forall  k=1,...,d_1$, and $\displaystyle\nu=\frac{\theta(1-2\theta)^2}{2(1-3\theta)}$; $\displaystyle\hat\pi_t^\alpha:=-2\pi_t^\alpha\tilde\xi_t^{\alpha,0}=-2\pi_t^\alpha\int_0^t\pi^\alpha_sdw_s$;
\item$P_t^\alpha=\hat P_t^\alpha:=0.$
\end{itemize}
 
Let $\tau_2=\tau^{\alpha,x}_{D_{\lambda^2}}$. For $x\in D^\lambda_\delta$ and $\xi\in\Rd$, we have
\begin{enumerate}
\item For each $\alpha\in\mathfrak A$, $\mathrm{B}^\kappa_2(x^{\alpha,x}_t,\xi^{\alpha,\xi}_t)$ is a local supermartingale on $[0,\tau_2]$ for each $\kappa\in[0,1+\kappa_2]$, where $\kappa_2=\kappa_2(K_0,d,d_0,D,\lambda)$ is a sufficiently small positive constant;
\item $\displaystyle{\sup_{\alpha\in\mathfrak A}E^\alpha_{\xi}\int_0^{\tau_2}|\xi_t|^2dt\le N\mathrm{B}_2(x,\xi)}$;
\item $\displaystyle{\sup_{\alpha\in\mathfrak A}E^\alpha_{\xi}\sup_{t\le\tau_2}|\xi_t|^2\le N\mathrm{B}_2(x,\xi)}$;
\item $\displaystyle{\sup_{\alpha\in\mathfrak A}E^\alpha_{0}\sup_{t\le\tau_2}|\tilde\xi_t|^2\le N\mathrm{B}_2(x,\xi)}$;
\item $\displaystyle{\sup_{\alpha\in\mathfrak A}E^\alpha_{0}\sup_{t\le\tau_2}|\eta_t|\le N\mathrm{B}_2(x,\xi)}$;
\item $\displaystyle{\sup_{\alpha\in\mathfrak A}E^\alpha_{0}\bigg(\int_0^{\tau_2}|\eta_t|^2 dt\bigg)^{1/2}\le N\mathrm{B}_2(x,\xi)}$; 
\end{enumerate}
where $N$ is a constant depending on $K_0,d,d_1,D$ and $\lambda$.

\end{lemma}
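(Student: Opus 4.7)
The strategy mirrors the proof of Lemma \ref{l1}. The main work is to apply It\^o's formula to $\mathrm{B}_2(x_t^{\alpha,x},\xi_t^{\alpha,\xi})$ and exhibit a pointwise drift bound
$$\Gamma_2(x,\xi)\le -c\lambda^{3\theta}\psi^{1-2\theta}\Big(|\xi|^2+\pxsops\Big),\qquad (x,\xi)\in D_{\lambda^2}\times\Rd,$$
for some constant $c=c(K_0,d,d_1,D)>0$, uniformly in $\alpha\in A$. Once this bound is in hand, items (1)--(6) will follow by routine adaptations of the corresponding steps in Lemma \ref{l1}.

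First I compute $d\mathrm{B}_2(x_t,\xi_t)=\Lambda_2(x_t,\xi_t)dw_t+\Gamma_2(x_t,\xi_t)dt$. Because $P_t\equiv\hat P_t\equiv 0$, the identity $(\ref{pproperty})$ is no longer available; instead, with $r_t=\theta\psi_{(\xi_t)}/\psi$, the stochastic differentials of $\psi$, $\psi^{1-2\theta}$, $\psi_{(\xi_t)}$, $\psi_{(\xi_t)}^2/\psi$ and $|\xi_t|^2$ are expanded directly. The Hessian correction for $\psi^{1-2\theta}$ contributes a negative multiple of $\psi^{-1-2\theta}|\psi_x\sigma|^2$, and the cross It\^o term between $d\psi^{1-2\theta}$ and $d(\psi_{(\xi_t)}^2/\psi)$ produces terms of the form $\psi^{-2\theta}(\psi_{(\xi)}/\psi)\psi_{(\sigma_k)}\cdot$(martingale increments of $\xi_t$). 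Regrouping, $\Gamma_2$ decomposes into a dissipative piece driven by $L\psi\le-1$ (weighted by $\psi^{1-2\theta}$), a quadratic form in $\big((\xi,\sigma_k),\psi_{(\sigma_k)}\big)$ arising from the It\^o/cross corrections together with the $-\psi_{(\sigma\pi)}$ contribution, and lower-order terms absorbable into $K|\xi|^2$ by the standing bound on $\sigma,b,\psi$.

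The role of the specific choice of $\pi$ is completion of squares: with $\nu=\theta(1-2\theta)^2/[2(1-3\theta)]$ and $\pi^k=\nu\psi^{-2}[K_1\psi(\xi,\sigma_k)+\psi_{(\xi)}\psi_{(\sigma_k)}]$, the bilinear cross term in $\big((\xi,\sigma_k),\psi_{(\sigma_k)}\big)$ cancels against $-\psi_{(\sigma\pi)}$ and leaves a quadratic form with nonpositive trace. The constraint $\theta\in(0,1/3)$ is precisely what guarantees $\nu>0$ and that this quadratic form is negative semidefinite up to admissible perturbations. Choosing then $K_1=K_1(K_0,d,d_1,D)$ large enough to dominate the linear-in-$|\xi|^2$ contributions from $\bar b=2r b$ and $\|\bar\sigma\|^2$, and $\lambda$ small, secures the displayed inequality on $\Gamma_2$.

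With this bound established, item (1) for $\kappa=1$ is immediate; for $\kappa<1$ it follows by concavity of $x^\kappa$, and for $\kappa$ slightly larger than $1$ it follows from It\^o's formula applied to $\mathrm{B}_2^\kappa$ in the same way as at the end of the proof of Lemma \ref{l1}(1), noting that $\|\Lambda_2\|^2\le C(K_0,d,d_1,D,\lambda)\mathrm{B}_2^2(x,\xi)/\lambda^{6\theta}\cdot(|\xi|^2+\psi_{(\xi)}^2/\psi^2)$ so the added term $\tfrac{\kappa(\kappa-1)}{2}\mathrm{B}_2^{\kappa-2}\|\Lambda_2\|^2$ is dominated by $\kappa\mathrm{B}_2^{\kappa-1}\Gamma_2$ when $\kappa-1$ is small. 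Item (2) follows by integrating $-\Gamma_2$ from $0$ to $\tau_2$. Items (3) and (4) follow from the Davis (BDG) inequality applied to $|\xi_t|^2$ and $\tilde\xi_t=\int_0^t\pi_s dw_s$ together with (2), exactly as in Lemma \ref{l1}(3)--(4); note that since $P\equiv 0$ the bound $\|r\sigma+P\sigma\|\le K(|\xi|+|\psi_{(\xi)}|/\psi)$ used there is only improved. Items (5) and (6) are obtained by applying It\^o's formula to $e^{2\tilde\gamma(x_t)}|\eta_t|^2$ for a weight $\tilde\gamma$ (a suitable multiple of $\psi^{1-2\theta}$ or of $\gamma$) chosen so that the drift is bounded above by $-|\eta|^2+N(|\eta|+|\xi|^2)(|\xi|^2+\psi_{(\xi)}^2/\psi^2+|\hat\pi|)$, and then invoking Theorem III.6.8 of \cite{MR1311478} followed by the same absorption argument used at the end of the proof of Lemma \ref{l1}.

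The main obstacle is securing the sign of $\Gamma_2$: the coefficients must be chosen so that, after combining the $d\psi^{1-2\theta}$ correction with the It\^o decomposition of $\psi_{(\xi_t)}^2/\psi$, the residual quadratic form in $\big((\xi,\sigma_k),\psi_{(\sigma_k)}\big)$ is nonpositive. The constraint $\theta<1/3$ and the precise value of $\nu$ encode exactly this algebraic requirement. Note that, in contrast to Lemma \ref{l1}, no use of Assumption \ref{a2} is needed here, because the barrier is built from $\psi$-invariants alone and the $P$-rotation is not invoked. Once $\Gamma_2\le 0$ is obtained the remaining items are bookkeeping parallel to Lemma \ref{l1}.
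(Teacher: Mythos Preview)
Your proposal is correct and follows essentially the same approach as the paper: the paper organizes the drift as $\Gamma_2=\psi^{-2\theta}I_1+\psi^{-2\theta-1}I_2+\psi^{-2\theta-2}I_3$, shows each $I_j\le 0$ (with $I_3$ the quadratic form in $\big(\psi(\xi,\sigma_k),\psi_{(\xi)}\psi_{(\sigma_k)}\big)$ whose discriminant vanishes precisely for $\nu=\theta(1-2\theta)^2/[2(1-3\theta)]$), and extracts only $\Gamma_2\le -K\psi^{-2\theta}|\xi|^2$, which already suffices for (2); items (1)--(4) are then handled exactly as you describe. For (5)--(6) the paper's explicit weight is $e^{2\sqrt{\psi(x_t)}}$ rather than a multiple of $\psi^{1-2\theta}$ or of $\gamma$ (the latter is only defined on $D_\delta^\lambda$), so you should adjust that choice.
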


\begin{proof}
Again, we drop the superscript $\alpha$ throughout the proof and may drop the argument $x$ or $x_t$ when this will cause no confusion. Also, keep in mind that the constant $K\in[1,\infty)$ depends only on $K_0,d,d_1,D$, while the constant $N\in[1,\infty)$ depends on $K_0,d,d_1,D$ and $\lambda$.

Notice that the factor $\lambda^{3\theta}$ is a constant, so it doesn't hurt to ignore this factor throughout the proof of this lemma.

By It\^o's formula, we have
$$d\mathrm B_2(x_t,\xi_t)=\Gamma_2(x_t,\xi_t)dt+\Lambda_2(x_t,\xi_t)dw_t,$$
with
\begin{align*}
\Gamma_2(x,\xi)=&K_1\psi^{1-2\theta}\Big[2(\xi,2rb-\sigma\pi)+\|r\sigma\|^2\Big]\\
&+K_1|\xi|^2\Big[(1-2\theta)\psi^{-2\theta}L\psi-\theta(1-2\theta)\psi^{-2\theta-1}\psk^2\Big]\\
&+K_12(1-2\theta)(\xi,r\sigma_k)\psi^{-2\theta}\psk\\
&+\psi^{-2\theta}\Big\{2\psi_{(\xi)}\Big[(L\psi)_{(\xi)}+2rL\psi-\psi_{(\sigma_k)}\pi^k\Big]+\Big[\psi_{(\xi)(\sigma_k)}+r\psi_{(\sigma_k)}\Big]^2\Big\}\\
&+\psi_{(\xi)}^2\Big[-2\theta\psi^{-2\theta-1}L\psi+\theta(2\theta+1)\psi^{-2\theta-2}\psk^2\Big]\\
&-4\theta\psi^{-2\theta-1}\psk\psi_{(\xi)}\Big[\psi_{(\xi)(\sigma_k)}+r\psk\Big]\\
=&\psi^{-2\theta}I_1+\psi^{-2\theta-1}I_2+\psi^{-2\theta-2}I_3,
\end{align*}
where
\begin{align*}
I_1=&4K_1\theta(\xi,b)\px+K_1(1-2\theta)|\xi|^2L\psi+2\px(L\psi)_{(\xi)}+(\psi_{(\xi)(\sigma_k)})^2,\\
I_2=&K_1\theta^2\px^2\|\sigma\|^2-K_1\theta(1-2\theta)|\xi|^2\psk^2+2\theta\px^2L\psi-2\theta\psi_{(\xi)(\sigma_k)}\px\psk,\\
I_3=&-2K_1^2\nu\psi^2|(\xi,\sigma_k)|^2+\Big[-4K_1\nu+2K_1\theta(1-2\theta)\Big]\psi(\xi,\sigma_k)\px\psk\\
&+\Big[-2\nu+\theta^2+\theta(2\theta+1)-4\theta^2\Big]\px^2\psk^2.
\end{align*}
We claim that $I_1$, $I_2$ and $I_3$ are all non-positive for suitable $K_1$, $\theta$ and $\nu$.
First, to estimate $I_1$, we notice that
$$I_1\le\Big[4K_1\theta|\psi_x|_{0,D}|b|_{0,D}+K-K_1(1-2\theta)|L\psi|\Big]|\xi|^2$$
If $\theta$ is sufficient small such that $4\theta|\psi_x|_{0,D}|b|_{0,D}\le1/4$, then we have
\begin{align*}
I_1\le&\Big[(1/4)K_1+K-(1/3)K_1|L\psi|\Big]|\xi|^2\le\Big[K-(1/12)K_1|L\psi|\Big]|\xi|^2
\end{align*}
Therefore, $I_1\le-K|\xi|^2$ for sufficiently large $K_1\ge24K$.

Next, to estimate $I_2$, we observe that
$$-2\psi_{(\xi)(\sigma_k)}\px\psk\le K|\xi|\|\sigma\||\px||\sigma^*\psi_x|\le\px^2\|\sigma\|^2+K|\xi|^2|\sigma^*\psi_x|^2.$$
It follows that
\begin{align*}
I_2\le&\theta\Big[(K_1\theta+1)\|\sigma\|^2+2L\psi\Big]\px^2+\theta\Big[K-K_1(1-2\theta)\Big]|\xi|^2\psk^2.
\end{align*}
By first choosing sufficiently large $K_1$ such that $K-(1/3)K_1\le0$ and then sufficiently small $\theta$ such that $K_1\theta\le1$, we get $I_2\le0$.

To estimate $I_3$, by letting $\mathfrak a_k=\psi(\xi,\sigma_k)$ and $\mathfrak b_k=\px\psk$, we can rewrite $I_1$ as
$$-2K_1^2\nu \mathfrak a_k^2+\Big[-4K_1\nu+2K_1\theta(1-2\theta)\Big]\mathfrak a_k\mathfrak b_k+\Big[-2\nu+(1-\theta)\theta\Big]\mathfrak b_k^2.$$
In order to make the above quadratic form non-positive, it suffices to find a constant $\nu>0$ such that the discriminant equals zero, which yields that
$$\nu=\theta(1-2\theta)^2/[2(1-3\theta)]>0.$$
This is exactly how $\nu$ is defined in the statement of the lemma.

Collecting the estimates above we see that, if we pick the constants $K_1=24K$ and $\theta=\min\{1/3,1/K_1, 1/(16|\psi_x|_{0,D}|b|_{0,D})\}$, then 
\begin{equation}\label{Gamma2}
\Gamma_2(x,\xi)\le -\psi^{-2\theta}K|\xi|^2<0, \qquad\forall(x,\xi)\in D_{\lambda^2}\times\Rd.
\end{equation}
Thus $\mathrm{B}_2(x_t,\xi_t)$ is a local supermartingale on $[0,\tau_2]$.

Properties (1)-(4) can be verified by almost repeating the proof of Properties (2)-(4) in Lemma \ref{l1}. To prove (5) and (6) we apply It\^o's formula to $\exp(2\sqrt{\psi(x_t)})|\eta_t|^2$ and then mimic the proof of Properties (5) and (6) in Lemma \ref{l1}.

\end{proof}

\begin{lemma}\label{l3}
For sufficiently small $\lambda$, we have,
\begin{align}
\B_1(x,\xi)\ge& 4\B_2(x,\xi)\mbox{ on }\{x:\psi(x)=\lambda\}\times\Rd,\label{b1ge4b2}\\
\B_2(x,\xi)\ge& 4\B_1(x,\xi)\mbox{ on }\{x:\psi(x)=\lambda^2\}\times\Rd\label{b2ge4b1}.
\end{align}
\end{lemma}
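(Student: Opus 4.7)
The plan is to reduce both inequalities to direct, coefficient-by-coefficient comparisons on the relevant level sets. Observe that both barriers have the form
\[
\B_i(x,\xi) = C_i^{(1)}(x)\,|\xi|^2 + C_i^{(2)}(x)\,\psi_{(\xi)}^2,\qquad i=1,2,
\]
so if one can establish the coefficient inequalities $C_1^{(1)}\ge 4C_2^{(1)}$ and $C_1^{(2)}\ge 4C_2^{(2)}$ on $\{\psi=\lambda\}$ (and the reversed inequalities on $\{\psi=\lambda^2\}$), then the desired inequalities (\ref{b1ge4b2}) and (\ref{b2ge4b1}) follow immediately for every $\xi\in\Rd$. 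This is sufficient (not necessary) and cleanly decouples the computation.

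First I would evaluate $\beta$, $\gamma$ on the two level sets. On $\{\psi=\lambda\}$,
\[
\beta = 1+\tfrac{1}{8\lambda}\lambda\bigl(1-\tfrac{1}{4}\bigr)=\tfrac{35}{32},\qquad \gamma = \lambda^2+\tfrac{3}{4}\lambda,
\]
so the $\B_1$-coefficients are $\beta\gamma=\tfrac{35}{32}(\tfrac{3}{4}\lambda+\lambda^2)=O(\lambda)$ and $\gamma/\lambda=\tfrac{3}{4}+\lambda$. The $\B_2$-coefficients there are $K_1\lambda^{3\theta}\lambda^{1-2\theta}=K_1\lambda^{1+\theta}$ and $\lambda^{3\theta}\lambda^{-2\theta}=\lambda^{\theta}$. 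Then (\ref{b1ge4b2}) reduces to
\[
\tfrac{35}{32}\bigl(\tfrac{3}{4}\lambda+\lambda^2\bigr)\ge 4K_1\lambda^{1+\theta},\qquad \tfrac{3}{4}+\lambda\ge 4\lambda^{\theta},
\]
both of which hold once $\lambda^{\theta}$ is sufficiently small (using only that $\theta>0$, with $K_1$ and $\theta$ already fixed in Lemma \ref{l2}).

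Next, on $\{\psi=\lambda^2\}$ I would compute
\[
\beta = 1+\tfrac{\lambda}{8}\bigl(1-\tfrac{\lambda}{4}\bigr)\to 1,\qquad \gamma=\lambda^2\bigl(2-\tfrac{\lambda}{4}\bigr),
\]
so the $\B_1$-coefficients are $\beta\gamma\sim 2\lambda^2$ and $\gamma/\lambda^2\sim 2$. The $\B_2$-coefficients are $K_1\lambda^{3\theta}\lambda^{2-4\theta}=K_1\lambda^{2-\theta}$ and $\lambda^{3\theta}\lambda^{-4\theta}=\lambda^{-\theta}$. Then (\ref{b2ge4b1}) reduces to
\[
K_1\lambda^{2-\theta}\ge 4\beta\gamma,\qquad \lambda^{-\theta}\ge 4\gamma/\lambda^2,
\]
i.e.\ (to leading order) $K_1\ge 8\lambda^{\theta}$ and $\lambda^{-\theta}\ge 8$. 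The first is automatic since $K_1\ge 24$ and $\lambda<1$; the second holds once $\lambda^{\theta}\le 1/8$. Both are satisfied for all sufficiently small $\lambda$.

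The main obstacle, if any, is purely bookkeeping: one must keep track of the powers of $\lambda$ and verify that the exponent gaps are in the right direction. The decisive structural observation is that $\theta>0$ makes $\lambda^\theta\to 0$ as $\lambda\downarrow 0$ (used on $\{\psi=\lambda\}$) and $\lambda^{-\theta}\to\infty$ (used on $\{\psi=\lambda^2\}$), so both inequalities in fact become progressively easier as $\lambda$ shrinks. Since all thresholds above depend only on $K_1,\theta,K_0,d,d_1,D$, one can choose $\lambda=\lambda(K_0,d,d_1,D)$ small enough that both (\ref{b1ge4b2}) and (\ref{b2ge4b1}) hold simultaneously.
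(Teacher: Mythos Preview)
Your proposal is correct and follows essentially the same approach as the paper: direct substitution of $\psi=\lambda$ and $\psi=\lambda^2$ into $\B_1,\B_2$, then comparison using only that $\theta\in(0,1/3)$ and $K_1$ are fixed independently of $\lambda$. The paper writes out the full expressions and leaves the comparison implicit, whereas you make the coefficient-by-coefficient inequalities explicit; this is a minor expository refinement, not a different argument.
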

\begin{proof}
Direct substitution leads to
\begin{equation*}
\B_1(x,\xi)=\left\{
\begin{array}{ll}
\displaystyle\lambda\Big(\frac{3}{4}+\lambda\Big)\bigg[(35/32)|\xi|^2+\pxsop\bigg],&\mbox{if }\psi=\lambda,\\
\displaystyle\lambda^2\Big(2-\frac{\lambda}{4}\Big)\bigg[\Big(1+\frac{\lambda-\lambda^2/4}{8}\Big)|\xi|^2+\pxsop\bigg],&\mbox{if }\psi=\lambda^2;
\end{array}
\right.  
\end{equation*}
\begin{equation*}
\B_2(x,\xi)=\left\{
\begin{array}{lll}
\displaystyle\lambda^{1+\theta}\bigg[K_1|\xi|^2+\pxsop\bigg],\qquad\qquad\qquad\qquad\ \ &\mbox{if }\psi=\lambda,\\
\displaystyle\lambda^{2-\theta}\bigg[K_1|\xi|^2+\pxsop\bigg],&\mbox{if }\psi=\lambda^2.
\end{array}
\right.  
\end{equation*}
Recall that $K_1$ and $\theta$ don't depend on $\lambda$, and $\theta\in(0,1/3)$. Therefore, (\ref{b1ge4b2}) and (\ref{b2ge4b1}) are true for sufficiently small $\lambda$.

\end{proof}

With quasiderivatives in both of the subdomains $D_\delta^\lambda$ and $D_{\lambda^2}$, we next construct quasiderivatives in $D_\delta$. Roughly speak, we glue the quasiderivatives constructed in Lemmas \ref{l1} and \ref{l2}.

Let $x,y,z\in D_\delta$ and $\xi,\eta\in\Rd$. We start from defining stopping times as follows:
\begin{align*}
\tau^{\alpha,x}_\delta=&\inf\{t\ge0: x_t^{\alpha,x}\notin D_\delta\},\\
\tau^{\alpha,x}_{-1}=&0,\\
\tau^{\alpha,x}_0=&\inf\{t\ge0:\psi(x_t^{\alpha,x})\le \lambda^2\},\\
\tau^{\alpha,x}_1=&\tau_\delta^{\alpha,x}\wedge\inf\{t\ge\tau^{\alpha,x}_0:\psi(x_t^{\alpha,x})\ge \lambda\},
\end{align*}
and recursively, for $n\in\mathbb N$,
\begin{align*}
\tau^{\alpha,x}_{2n}=&\tau_\delta^{\alpha,x}\wedge\inf\{t\ge\tau_{2n-1}^{\alpha,x}:\psi(x_t^{\alpha,x})\le \lambda^2\},\\
\tau^{\alpha,x}_{2n+1}=&\tau_\delta^{\alpha,x}\wedge\inf\{t\ge\tau_{2n}^{\alpha,x}:\psi(x_t^{\alpha,x})\ge \lambda\}.
\end{align*}
Then for each $t\in[0,\tau_\delta^{\alpha,x})$, the auxiliary processes $r_t^{\alpha,x}$, $\hat r_t^{\alpha,x}$, $\pi_t^{\alpha,x}$, $\hat\pi_t^{\alpha,x}$, $P_t^{\alpha,x}$ and $\hat P_t^{\alpha,x}$ are defined by Lemma \ref{l1} when $t\in[\tau^{\alpha,x}_{2n-2},\tau^{\alpha,x}_{2n-1})$, and by Lemma \ref{l2} when $t\in[\tau^{\alpha,x}_{2n-1},\tau^{\alpha,x}_{2n})$. Therefore, on $[0,\tau_\delta^{\alpha,x})$, we can define almost surely continuous processes $\xi_t^{\alpha,x}$, $\eta_t^{\alpha,x}$, $\tilde\xi_t^{\alpha}$ and $\tilde\eta_t^{\alpha}$ based on (\ref{itoxi})-(\ref{itoetaa}) by letting the initial points in each time subinterval be the terminal points in the previous time subinterval. Similarly, on $[0,\tau_\delta^{\alpha,x}\wedge\bar\tau_\delta^{\alpha,y}(\epsilon))$, we can define almost surely continuous processes $y_t^{\alpha,y}(\epsilon)$ by (\ref{itoy}), and on $[0,\tau_\delta^{\alpha,x}\wedge\hat\tau_\delta^{\alpha,z}(\epsilon))$ almost surely continuous processes $z_t^{\alpha,z}(\epsilon)$ by (\ref{itoz}). Note that based on our construction, the processes defined by (\ref{itoy})-(\ref{itoetaa}) have unique representation in each time subinterval.

For convenience of notation, on $D_\delta$, we define
\begin{align*}
\overline{\mathrm B}(x,\xi)=&\mathbbm1_{x\in D_\delta^\lambda}\B_1(x,\xi)+\mathbbm1_{x\in \bar D_{\lambda^2}}\B_2(x,\xi),\\
\underline\B(x,\xi)=&\left\{
\begin{array}{ll}
\B_1(x,\xi)&\mbox{ in }D_\delta^{\lambda^2}\\
\min\{\B_1(x,\xi),\B_2(x,\xi)\}&\mbox{ in }\bar D_{\lambda^2}^\lambda\\
\B_2(x,\xi)&\mbox{ in }D_{\lambda}.
\end{array}
\right.
\end{align*}

From now on, the stochastic processes $r_t^{\alpha,x}$, $\hat r_t^{\alpha,x}$, $\pi_t^{\alpha,x}$, $\hat\pi_t^{\alpha,x}$, $P_t^{\alpha,x}$, $\hat P_t^{\alpha,x}$, $\xi_t^{\alpha,x}$, $\eta_t^{\alpha,x}$, $\tilde\xi_t^{\alpha}$, $\tilde\eta_t^{\alpha}$, $y_t^{\alpha,y}(\epsilon)$ and $z_t^{\alpha,z}(\epsilon)$ are supposed to be defined in the way mentioned above.

\begin{lemma}\label{applicable}
For each $x\in D_\delta$, $\xi\in\Rd$, we have
\begin{enumerate}
\item $\displaystyle{\sup_{\alpha\in\mathfrak A}E^\alpha_{\xi}\sup_{t\le\tau_\delta}|\xi_t|^2\le N\overline{\mathrm B}(x,\xi)}$;
\item $\displaystyle{\sup_{\alpha\in\mathfrak A}E^\alpha_{0}\sup_{t\le\tau_\delta}|\tilde\xi_t|^2\le N\overline{\mathrm B}(x,\xi)}$;
\item $\displaystyle{\sup_{\alpha\in\mathfrak A}E^\alpha_{x,\xi}\int_0^{\tau_\delta}\Big(|\xi_t|^2+\pxtsops\Big) dt\le N\overline{\mathrm B}(x,\xi)}$;
\item $\displaystyle{\sup_{\alpha\in\mathfrak A}E^\alpha_\xi|\xi_\gamma|^{2(1+l)}\le N\overline{\B}^{1+l}(x,\xi)}$, with $l=\kappa_1\wedge\kappa_2$, for any $\gamma^{\alpha,x}\le\tau_\delta^{\alpha,x}$;
\item $\displaystyle{\sup_{\alpha\in\mathfrak A}E^\alpha_{0}\sup_{t\le\tau_\delta}|\eta_t|\le N\overline{\mathrm B}(x,\xi)}$;
\item $\displaystyle{\sup_{\alpha\in\mathfrak A}E^\alpha_{0}\Big(\int_0^{\tau_\delta}|\eta_t|^2 dt\Big)^{1/2}\le N\overline{\mathrm B}(x,\xi)}$;
\item $\displaystyle{\sup_{\alpha\in\mathfrak A}E^\alpha_{x,\xi}\underline{\mathrm B}(x_\gamma,\xi_\gamma)\le 2\overline{\mathrm B}(x,\xi)}$, for any $\gamma^{\alpha,x}\le\tau_\delta^{\alpha,x}$.
\end{enumerate}
where $N$ is a constant depending on $K_0,d,d_1,D$ and $\lambda$. Meanwhile, the conclusions in Lemmas \ref{thm1}, \ref{thm2} and \ref{markovtime} are all true for these processes with $U=D_\delta$.
\end{lemma}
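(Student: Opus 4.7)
The plan is to glue the single-interval bounds from Lemmas \ref{l1} and \ref{l2} together using the transition inequalities of Lemma \ref{l3}, producing geometric decay across the alternating regimes. Concretely, on each interval $[\tau_{2n-2}, \tau_{2n-1})$ the quasiderivative lives in $\overline{D_\delta^\lambda}$, so $\B_1(x_t, \xi_t)$ is a local supermartingale; on each $[\tau_{2n-1}, \tau_{2n})$ it lives in $\overline{D_{\lambda^2}}$, so $\B_2(x_t, \xi_t)$ is a local supermartingale. At every interior transition $\tau_n < \tau_\delta$ one has $\psi(x_{\tau_n}) \in \{\lambda, \lambda^2\}$, so Lemma \ref{l3} tells us that the ``new'' barrier at $x_{\tau_n}$ is at most one quarter of the ``old'' one. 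This factor of $1/4$ at every handover is the key.

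I would prove item (7) first, since it drives everything else. Given $\gamma \le \tau_\delta$, decompose
\begin{equation*}
E\underline\B(x_\gamma, \xi_\gamma) = \sum_{n \ge 0} E\big[\underline\B(x_\gamma, \xi_\gamma)\mathbbm 1_{\tau_{n-1}\le \gamma < \tau_n}\big].
\end{equation*}
On the event $\{\tau_{n-1} \le \gamma < \tau_n\}$ the relevant barrier $\B_{i(n)}$ dominates $\underline\B$, and optional stopping on the local supermartingale $\B_{i(n)}(x_t, \xi_t)$ restricted to $[\tau_{n-1}, \gamma \wedge \tau_n]$ (with a routine localization) bounds each summand by $E\B_{i(n)}(x_{\tau_{n-1}}, \xi_{\tau_{n-1}})$. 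Iterating the Lemma \ref{l3} shrinkage through the preceding $n-1$ transitions then yields $E\B_{i(n)}(x_{\tau_{n-1}}, \xi_{\tau_{n-1}}) \le 4^{-(n-1)} \overline\B(x,\xi)$. Summing $\sum 4^{-(n-1)} = 4/3 < 2$ produces the constant in item (7).

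With item (7) secured, items (1)--(3) and (5)--(6) follow by the same telescoping: on each subinterval, the corresponding item in Lemmas \ref{l1}(2)--(6) or \ref{l2}(2)--(6) bounds the moment of interest by a constant multiple of $\B_{i(n)}(x_{\tau_{n-1}}, \xi_{\tau_{n-1}})$, and the geometric series of expectations sums to $N\overline\B(x,\xi)$. For item (4), I would use the stronger supermartingale property of $\B_i^{1+\kappa_i}$ afforded by Lemmas \ref{l1}(1) and \ref{l2}(1): inside each regime one has $|\xi|^{2(1+l)} \le N\B_i^{1+l}(x,\xi)$ with $l = \kappa_1 \wedge \kappa_2$, and since the transition factor $4^{-(1+l)}$ is still strictly less than one the telescoping proceeds identically with powers $1+l$ in place of $1$. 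Finally, the hypotheses of Lemma \ref{meaningful} are straightforward on $D_\delta$: because $\psi \ge \delta > 0$, each of $r_t^\alpha, \pi_t^\alpha, P_t^\alpha$ from Lemmas \ref{l1} and \ref{l2} is affine in $\xi_t^{\alpha,\xi}$ with coefficients uniformly bounded by a constant $C = C(K_0, D, \delta, \lambda)$, and $\hat r_t^\alpha, \hat\pi_t^\alpha$ are square-integrable in view of items (1)--(3); hence the convergence results of Lemmas \ref{thm1}, \ref{thm2}, and \ref{markovtime} transfer verbatim with $U = D_\delta$.

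The main obstacle is making the alternating supermartingale argument rigorous when the number of oscillation transitions is a priori unbounded, while keeping all constants uniform in $\alpha \in \mathfrak A$ so that the supremum can be taken at the end. The specific ``factor of four'' of Lemma \ref{l3} is exactly what is needed here: the geometric ratio $1/4$ is small enough to produce the sharp constant $2$ in item (7) and, by a fortiori argument, to absorb the Davis-inequality constants arising in Lemmas \ref{l1} and \ref{l2} when estimating $\sup_t |\xi_t|^2$, $\sup_t |\eta_t|$, and the integrated quantities $\int_0^{\tau} \psi_{(\xi_t)}^2/\psi^2\, dt$ and $\int_0^\tau |\eta_t|^2\, dt$.
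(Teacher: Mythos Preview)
Your proposal is correct and uses essentially the same mechanism as the paper: the supermartingale property of $\B_1$ or $\B_2$ within each oscillation interval together with the factor-four drop from Lemma~\ref{l3} at each handover. The only cosmetic difference is that the paper organizes the argument as a telescoping sum---tracking $E\mathbbm1_{\tau_n<\tau_\delta}\B_n(x_{\tau_n},\xi_{\tau_n})$ together with the increment $\sup_{\tau_n\le t<\tau_{n+1}}\bigl||\xi_{t\wedge\tau_\delta}|^2-|\xi_{\tau_n}|^2\bigr|$ and showing the former absorbs the latter while decreasing in $n$---whereas you decompose into disjoint events and sum an explicit geometric series; both reduce to the same two ingredients and yield the same bounds.
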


\begin{proof}
It suffices to prove the uncontrolled version since the righthand sides of the inequalities are independent of $\alpha$. Let
\begin{equation*}
\B_n(x,\xi)=\left\{
\begin{array}{ll}
\B_1(x,\xi),&\mbox{for odd $n$}\\
\B_2(x,\xi),&\mbox{for even $n$.}
\end{array}
\right.
\end{equation*}
Suppose that the constant $N$ in Lemma \ref{l1}(3) and Lemma \ref{l2}(3) are the same by choosing the larger one. For $n=-1,0,1,2,\dots$ By the strong Markov property and Lemma \ref{l3}, we have,
\begin{align*}
&E\Big[\mathbbm1_{\tau_{n+1}<\tau_\delta}\B_{n+1}(x_{\tau_{n+1}},\xi_{\tau_{n+1}})+(1/N)\mathbbm1_{\tau_n<\tau_\delta}\sup_{\tau_n\le t<\tau_{n+1}}\big||\xi_{t\wedge\tau_\delta}|^2-|\xi_{\tau_n}|^2\big|\Big]\\
=&E\Big\{E\Big[\mathbbm1_{\tau_{n+1}<\tau_\delta}\B_{n+1}(x_{\tau_{n+1}},\xi_{\tau_{n+1}})+(1/N)\mathbbm1_{\tau_n<\tau_\delta}\sup_{\tau_n\le t<\tau_{n+1}}\big||\xi_{t\wedge\tau_\delta}|^2-|\xi_{\tau_n}|^2\big|\Big|\mathcal F_{\tau_n}\Big]\Big\}\\
\le&E\mathbbm1_{\tau_n<\tau_\delta}2\B_{n+1}(x_{\tau_n},\xi_{\tau_n})\\
\le&E\mathbbm1_{\tau_n<\tau_\delta}\B_{n}(x_{\tau_n},\xi_{\tau_n}).
\end{align*}
Adding the inequalities over $n=-1,0,\dots,m,$ and canceling duplicate terms, we have
\begin{equation*}
\begin{gathered}
E\Big[\mathbbm1_{\tau_{m+1}<\tau_\delta}\B_{m+1}(x_{\tau_{m+1}},\xi_{\tau_{m+1}})+(1/N)\sum_{n=-1}^{m}\mathbbm1_{\tau_n<\tau_\delta}\sup_{\tau_n\le t<\tau_{n+1}}\big||\xi_{t\wedge\tau_\delta}|^2-|\xi_{\tau_n}|^2\big|\Big]\\
\le \mathbbm1_{x\in D_\delta^\lambda}\B_1(x,\xi)+\mathbbm1_{x\in D_{\lambda^2}}\B_2(x,\xi).
\end{gathered}
\end{equation*}
By letting $m\uparrow\infty$, we get
$$E\Big[\sum_{n=-1}^{\infty}\mathbbm1_{\tau_n<\tau_\delta}\sup_{\tau_n\le t<\tau_{n+1}}\big||\xi_{t\wedge\tau_\delta}|^2-|\xi_{\tau_n}|^2\big|\Big]\le N\overline{\mathrm B}(x,\xi).$$
It follows that
\begin{align*}
E\sup_{t<\tau_\delta}|\xi_t|^2\le&|\xi|^2+\sum_{n=-1}^\infty E\mathbbm 1_{\tau_n<\tau_\delta}\sup_{\tau_n\le t<\tau_{n+1}}\big||\xi_{t\wedge\tau_\delta}|^2-|\xi_{\tau_n}|^2\big|\le|\xi|^2+N\overline{\mathrm B}(x,\xi).
\end{align*}
We can prove (2)-(5) by repeating the argument above, and (6) is implied by (5).

The inequality in (7) can also be proved very similarly. To be precise, we start from observing that
\begin{align*}
&E\Big[\mathbbm1_{\tau_{n+1}<\gamma}\B_{n+1}(x_{\tau_{n+1}},\xi_{\tau_{n+1}})+\mathbbm1_{\tau_n<\gamma}\big|\underline\B(x_{\tau_{n+1}\wedge\gamma},\xi_{\tau_{n+1}\wedge\gamma})-\underline\B(x_{\tau_{n}},\xi_{\tau_{n}})\big|\Big]\\
=&E\Big\{E\Big[\mathbbm1_{\tau_{n+1}<\gamma}\B_{n+1}(x_{\tau_{n+1}},\xi_{\tau_{n+1}})+\mathbbm1_{\tau_n<\gamma}\big|\underline\B(x_{\tau_{n+1}\wedge\gamma},\xi_{\tau_{n+1}\wedge\gamma})-\underline\B(x_{\tau_{n}},\xi_{\tau_{n}})\big|\Big|\mathcal F_{\tau_n}\Big]\Big\}\\
\le&E\big[\mathbbm1_{\tau_{n+1}<\gamma}\B_{n+1}(x_{\tau_{n}},\xi_{\tau_{n}})+\mathbbm1_{\tau_n<\gamma}2\B_{n+1}(x_{\tau_{n}},\xi_{\tau_{n}})\big]\\
\le&E\mathbbm1_{\tau_n<\gamma}3\B_{n+1}(x_{\tau_n},\xi_{\tau_n})\\
\le& E\mathbbm1_{\tau_n<\gamma}\B_{n}(x_{\tau_n},\xi_{\tau_n}).
\end{align*}
Then a similar argument leads to
$$E^\alpha_{x,\xi}\underline{\mathrm B}(x_\gamma,\xi_\gamma)\le \underline{\mathrm B}(x,\xi)+\overline{\mathrm B}(x,\xi)\le 2\overline{\mathrm B}(x,\xi).$$

Lemmas \ref{thm1}, \ref{thm2} and \ref{markovtime} are true since the assumptions in Lemma \ref{meaningful} hold for each $t\in[0,\tau^{\alpha,x}_\delta)$.
\end{proof}

\section{Proof of Theorem {\ref{t2}}}\label{sub5}

In the proof, for the simplicity of the notation, we may drop the superscripts such as $\alpha$ and $x$ when this will cause no confusion. 

\begin{proof}[Proof of (\ref{e1})] 

First, we fix $x\in D_\delta$ and $\xi\in\Rd$. Choose a sufficiently small positive $ \epsilon_0$, such that $B(x,\epsilon_0|\xi|):=\{y:|y-x|\le \epsilon_0|\xi|\}\subset D_\delta$. For any $ \epsilon\in(0, \epsilon_0)$, by Assumption \ref{a2}, 
$$v(x+ \epsilon\xi)=\sup_{(\alpha,\beta)\in \mathfrak{A}\times \mathfrak B}E^{\alpha,\beta}_{x+ \epsilon\xi}\Big[g(\tilde x_{\tau})e^{-\phi_{\tau}}+\int_0^{\tau}f^{\alpha_s}(\tilde x_s)e^{-\phi_s}ds\Big],$$
where $\mathfrak B$ is the set of all progressively-measurable processes $\beta$ with value in $\mathbb O^d$ for all $t\ge0$ and
$$\tilde x_t^{\alpha,x}=x+\int_0^t\beta_s\sigma^{\alpha_s}dw_s+\int_0^t b^{\alpha_s}ds.$$
By Bellman's principle (Theorem 1.1 in \cite{MR2303952}, in which $Q$ is defined by $D\times [-1,T+1]$, where $T$ is an arbitrary positive constant), we have, with the stopping time $\tau\le\tau_{\delta}^{\alpha,\beta,x+ \epsilon\xi}\wedge T$,
$$
v(x+ \epsilon\xi)=\sup_{(\alpha,\beta)\in \mathfrak{A}\times\mathfrak B}E^{\alpha,\beta}_{x+ \epsilon\xi}\Big[v(\tilde x_{\tau})e^{-\phi_{\tau}}+\int_0^{\tau}f^{\alpha_s}(\tilde x_s)e^{-\phi_s}ds\Big]
$$
Applying Theorem 2.1 in \cite{MR631436} and Lemmas 2.1 and 2.2 in \cite{MR637615}, we have, with the stopping time $\tau\le\bar\tau_{\delta}^{\alpha,\beta,x+ \epsilon\xi}\wedge T$,
\begin{align}\label{vxplusex}
v(x+\epsilon\xi)=&\sup_{(\alpha,\beta)\in\mathfrak{A}\times\mathfrak B}E^{\alpha,\beta}_{x+\epsilon\xi}\bigg[v(\tilde y_{\tau}( \epsilon))p_{\tau}( \epsilon) e^{-{\phi}_{\tau}( \epsilon)}+\tilde q_\tau(\epsilon)\bigg],
\end{align}
where $\tilde y_t^{\alpha,\beta,y}( \epsilon)$ is defined by (\ref{itoy}) with $\exp(\epsilon P_t^\alpha)$ replaced by $\beta_t$ and
\begin{align}
\phi_t^{\alpha}( \epsilon)=&\int_0^t\theta_s^\alpha(\epsilon)c^{\alpha}ds,\\
\label{pt}
p_t^\alpha( \epsilon)=&\exp\bigg(\int_0^t \epsilon\pi^{\alpha}_sdw_s-\frac{1}{2}\int_0^t| \epsilon\pi^{\alpha}_s|^2ds\bigg),\\
\tilde q_t^{\alpha,\beta,y}(\epsilon)=&\int_0^{t}\theta_s^\alpha(\epsilon)f^{\alpha_s}(\tilde y_s^{\alpha,\beta,y}( \epsilon))p_s^\alpha( \epsilon)e^{-\phi_s^{\alpha}( \epsilon)}ds,\label{36}
\end{align}
with $r_s^{\alpha}, \pi_s^{\alpha}, P_s^{\alpha}$ defined in Lemma \ref{applicable}. Observe that $(\exp(\epsilon P_t^\alpha))_{t\ge0}\in\mathfrak B$, which implies that
$$\{y^{\alpha,y}_t:\alpha\in\mathfrak A\}\subset\{\tilde y^{\alpha,\beta,y}_t:\alpha\in\mathfrak A,\beta\in\mathfrak B\}.$$
Consequently, from (\ref{vxplusex}) we have
\begin{equation}
v(x+\epsilon\xi)\ge\sup_{\alpha\in\mathfrak A}E_{x+\epsilon\xi}^\alpha\bigg[v(y_{\tau}( \epsilon))p_{\tau}( \epsilon) e^{-{\phi}_{\tau}( \epsilon)}+q_\tau(\epsilon)\bigg],
\end{equation}
where $y_t^{\alpha,y}(\epsilon)$ is defined by (\ref{itoy}) and $q_t^{\alpha,y}(\epsilon)$ is defined by (\ref{36}) in which $\tilde y_t^{\alpha,\beta,y}$ is substituted with $y_t^{\alpha,y}$, i.e.
\begin{equation}\label{37}
q_t^{\alpha,y}(\epsilon)=\int_0^{t}\theta_s^\alpha(\epsilon)f^{\alpha_s}(y_s^{\alpha,y}( \epsilon))p_s^\alpha( \epsilon)e^{-\phi_s^{\alpha}( \epsilon)}ds
\end{equation}

To make the expression shorter, for any $\bar x=(x,x^{d+1},x^{d+2},x^{d+3})\in D\times[0,\infty)\times[0,\infty)\times\mathbb R$, we introduce
\begin{equation}\label{V}
V(\bar x)=v(x)\exp(-x^{d+1})x^{d+2}+x^{d+3}.
\end{equation}
If we also define
\begin{align*}
&\bar y_t^{\alpha,y}(\epsilon)=(y_t^{\alpha,y}(\epsilon), \phi_t^{\alpha}(\epsilon),p_t^\alpha(\epsilon),q_t^{\alpha,y}(\epsilon)),\qquad \bar x_t^{\alpha,x}=\bar y_t^{\alpha,x}(0),
\end{align*}
then for the stopping times 
$$\gamma^\alpha=\gamma^\alpha(\epsilon, T, n)=:\bar\tau_\delta^{\alpha,x+\epsilon\xi}\wedge\tau_\delta^{\alpha,x}\wedge T\wedge\vartheta^{\alpha,\xi}_n,$$
where $T\in [1,\infty)$ is constant, and $\vartheta_n^{\alpha,\xi}=\tau_\delta^{\alpha,x}\wedge\inf\{t\ge0:|\xi^{\alpha,\xi}_t|\ge n\}$, we have
$$v(x+ \epsilon\xi)\ge\sup_{\alpha\in\mathfrak{A}}E^\alpha_{x+\epsilon\xi}V(\bar y_\gamma(\epsilon)),\qquad v(x)=\sup_{\alpha\in\mathfrak{A}}E^\alpha_xV(\bar x_\gamma).$$

Due to the inequality $|\sup_\alpha \mathfrak f^\alpha-\sup_\alpha \mathfrak g^\alpha|\le\sup_\alpha|\mathfrak f^\alpha-\mathfrak g^\alpha|$, we have
\begin{align}
-\frac{v(x+ \epsilon\xi)-v(x)}{ \epsilon}\le&\sup_{\alpha\in\mathfrak{A}}E\bigg|\frac{V(\bar y^{\alpha,x+\epsilon\xi}_{\gamma^\alpha}(\epsilon))-V(\bar x^{\alpha,x}_{\gamma^\alpha})}{ \epsilon}\bigg|\nonumber\\
\le& I_1(\epsilon,T,n)+I_2(\epsilon,T,n).\label{i1i2}
\end{align}
Here
\begin{align*}
I_1(\epsilon, T,n)=&\sup_{\alpha\in\mathfrak{A}}E\bigg|\frac{V(\bar y^{\alpha,x+\epsilon\xi}_{\gamma^\alpha}(\epsilon))-V(\bar x^{\alpha,x}_{\gamma^\alpha})}{ \epsilon}-V_{(\bar\xi_{\gamma^\alpha}^{\alpha,\xi})}(\bar x_{\gamma^\alpha}^{\alpha,x})\bigg|,\\
I_2(\epsilon, T,n)=&\sup_{\alpha\in\mathfrak{A}}E|V_{(\bar\xi_{\gamma^\alpha}^{\alpha,\xi})}(\bar x_{\gamma^\alpha}^{\alpha,x})|,
\end{align*}
where
\begin{equation}\label{xibar}
\bar\xi_t^{\alpha,\xi}=(\xi_t^{\alpha,\xi},\xi_t^{d+1,\alpha},\xi_t^{d+2,\alpha},\xi_t^{d+3,\alpha}),
\end{equation}
with $\xi_t^{\alpha,\xi}$ the solution to the It\^o stochastic equation (\ref{itoxi}) and
\begin{align}
\xi_t^{d+1,\alpha}=&\int_0^t2r^\alpha_sc^{\alpha_s}ds,\\
\xi_t^{d+2,\alpha}=&\int_0^t\pi^\alpha_sdw_s\Big(=\tilde\xi^{\alpha,0}_t\Big),\\
\xi_t^{d+3,\alpha}=&\int_0^te^{-\phi_s^{\alpha}}\Big[f^{\alpha_s}_{(\xi_s^{\alpha,\xi})}(x_s^{\alpha,x})\label{42}\\
&+\big(2r^\alpha_s-\xi_s^{d+1,\alpha}+\xi_s^{d+2,\alpha}\big)f^{\alpha_s}(x_s^{\alpha,x})\Big]ds.\nonumber
\end{align}

We claim that 
\begin{equation}\label{i1}
\lim_{\epsilon\downarrow0}I_1(\epsilon,T,n)=0.
\end{equation}
By Lemma \ref{thm1}, it suffices to prove that
\begin{equation}
\lim_{\epsilon\downarrow0}\bigg(\sup_{\alpha\in\mathfrak A}E\sup_{t\le\gamma}\Big|\frac{\bar y_t^{\alpha,x+\epsilon\xi}(\epsilon)-\bar x_t^{\alpha,x}}{\epsilon}-\bar\xi_t^{\alpha,\xi}\Big|\bigg)=0. \label{barlimy}
\end{equation}
In other words, we just need to show
\begin{align}
&\lim_{\epsilon\downarrow0}\bigg(\sup_{\alpha\in\mathfrak A}E\sup_{t\le\gamma}\Big|\frac{ y_t^{\alpha,x+\epsilon\xi}(\epsilon)- x_t^{\alpha,x}}{\epsilon}-\xi_t^{\alpha,\xi}\Big|\bigg)=0,\label{lim1}\\
&\lim_{\epsilon\downarrow0}\bigg(\sup_{\alpha\in\mathfrak A}E\sup_{t\le\gamma}\Big|\frac{ \phi_t^{\alpha}(\epsilon)- \phi_t^{\alpha}}{\epsilon}-\xi_t^{d+1,\alpha}\Big|\bigg)=0,\label{lim2}\\
&\lim_{\epsilon\downarrow0}\bigg(\sup_{\alpha\in\mathfrak A}E\sup_{t\le\gamma}\Big|\frac{ p_t^{\alpha}(\epsilon)- 1}{\epsilon}-\xi_t^{d+2,\alpha}\Big|\bigg)=0,\label{lim3}\\
&\lim_{\epsilon\downarrow0}\bigg(\sup_{\alpha\in\mathfrak A}E\sup_{t\le\gamma}\Big|\frac{ q_t^{\alpha,x+\epsilon\xi}(\epsilon)- q_t^{\alpha,x}}{\epsilon}-\xi_t^{d+3,\alpha}\Big|\bigg)=0\label{lim4}.
\end{align}
The equation (\ref{lim1}) is exactly (\ref{res1b2}) with $p=1$, which has already been verified. The equation (\ref{lim2}) is true because of (\ref{guji}).
To prove (\ref{lim3}), we notice that
$$\frac{p_t^\alpha(\epsilon)-1}{\epsilon}-\xi_t^{d+2,\alpha}=\int_0^t\big(p^\alpha_s(\epsilon)-1\big)\pi^\alpha_sdw_s.$$
Recall that the stopping time $\gamma^\alpha$ is bounded by $T\wedge\vartheta_n^{\alpha,\xi}$. It follows by Davis inequality that
\begin{align*}
E^\alpha\sup_{t\le\gamma\wedge\gamma_m}\Big|\frac{ p_t(\epsilon)- 1}{\epsilon}-\xi_t^{d+2}\Big|
\le&3E^\alpha\Big(\int_0^{\gamma\wedge\gamma_m}\big(p_t(\epsilon)-1\big)^2|\pi_t|^2dt\Big)^{1/2}\\
\le&3\epsilon E^\alpha\bigg[\sup_{t\le\gamma\wedge\gamma_m}\Big|\frac{p_t(\epsilon)-1}{\epsilon}\Big|\Big(\int_0^{\gamma\wedge\gamma_m}|\pi_t|^2dt\Big)^{1/2}\bigg]\\
\le&3\epsilon\sqrt Tn N/\delta E^\alpha\sup_{t\le\gamma\wedge\gamma_m}\Big|\frac{p_t(\epsilon)-1}{\epsilon}\Big|\\
\le&3\epsilon\sqrt T nN/\delta E^\alpha\sup_{t\le\gamma\wedge\gamma_m}\bigg(\Big|\frac{p_t(\epsilon)-1}{\epsilon}-\xi_t^{d+2}\Big|+|\xi_t^{d+2}|\bigg).
\end{align*}
where $\gamma_m$ is a localizing sequence of stopping times such that the left hand side of the inequalities is finite for each $m$. Collecting similar terms to the left side of the inequality and then letting $m\rightarrow\infty$, by the monotone convergence theorem, we obtain
\begin{align*}
\big(1-3\epsilon\sqrt TnN/\delta\big)E^\alpha\sup_{t\le\gamma}\Big|\frac{ p_t(\epsilon)- 1}{\epsilon}-\xi_t^{d+2}\Big|\le&3\epsilon\sqrt T nN/\delta E^\alpha\Big(\int_0^\gamma|\pi_t|^2dt\Big)^{1/2}.
\end{align*}
Hence (\ref{lim3}) is obtained by first taking the supremum over $\mathfrak A$ and then letting $\epsilon\downarrow0$.

To prove (\ref{lim4}), for each $\alpha\in A$, we introduce the  function:
\begin{equation}\label{F}
F^\alpha: \bar D\times [0,\infty)\times[0,\infty)\times\mathbb R\rightarrow \mathbb R;\  \bar x\mapsto f^\alpha(x)\exp(-x^{d+1})x^{d+2}. 
\end{equation}
From (\ref{37}) and (\ref{42}) we have
\begin{align*}
\frac{ q_t^{\alpha,x+\epsilon\xi}(\epsilon)- q_t^{\alpha,x}}{\epsilon}-\xi_t^{d+3,\alpha}=&\int_0^t\bigg[\frac{ F^{\alpha_s}(\bar y_s^{\alpha,x+\epsilon\xi}(\epsilon))-F^{\alpha_s}( \bar x_s^{\alpha,x})}{\epsilon}-F^{\alpha_s}_{(\bar\xi_s^{\alpha,\xi})}(\bar x_s^{\alpha,x})\\
&+\frac{\arctan(\pi2\epsilon r_s^\alpha)}{\epsilon\pi}F^{\alpha-s}(\bar y_s^{\alpha,x+\epsilon\xi}(\epsilon))-2r_s^\alpha F^{\alpha_s}(\bar x^{\alpha,x}_s)\bigg]ds
\end{align*}
To prove (\ref{lim4}) it suffices to show that
\begin{equation}\label{lim5}
\lim_{\epsilon\downarrow0}\bigg(\sup_{\alpha\in\mathfrak A}E\sup_{t\le\gamma}\Big|\frac{ F^{\alpha_t}(\bar y_t^{\alpha,x+\epsilon\xi}(\epsilon))-F^{\alpha_t}( \bar x_t^{\alpha,x})}{\epsilon}-F^{\alpha_t}_{(\bar\xi_t^{\alpha,\xi})}(\bar x_t^{\alpha,x})\Big|\bigg)=0,
\end{equation}
\begin{equation}\label{lim6}
\lim_{\epsilon\downarrow0}\bigg(\sup_{\alpha\in\mathfrak A}E\sup_{t\le\gamma}\Big|\frac{\arctan(\pi2\epsilon r_t^\alpha)}{\epsilon\pi}F^{\alpha_t}(\bar y_t^{\alpha,x+\epsilon\xi}(\epsilon))-2r_t^\alpha F^{\alpha_t}(\bar x^{\alpha,x}_t)\Big|\bigg)=0.
\end{equation}
which are valid due to (\ref{lim1})-(\ref{lim3}) and (\ref{guji}). Therefore (\ref{lim4}) is proved.

We have obtained (\ref{i1}). Next, we estimate $I_2(\epsilon, T,n)$. From (\ref{V}) we have
$$V_{(\bar\xi^{\alpha,\xi}_t)}(\bar x^{\alpha,x}_t)=e^{-\phi_t^{\alpha,x}}v_{(\xi_t^{\alpha,\xi})}(x_t^{\alpha,x})+X_t^{\alpha,x,\xi},$$
where
\begin{align*}
X_t^{\alpha,x,\xi}=&e^{-\phi_t^{\alpha,x}}(\xi^{d+2,\alpha}_t-\xi_t^{d+1,\alpha})v(x_t^{\alpha,x})+\xi_t^{d+3,\alpha}.
\end{align*}
It follows that
$$I_2(\epsilon, T,n)=\sup_{\alpha\in\mathfrak A}E^\alpha_{x,\xi}|V_{(\bar\xi_\gamma)}(\bar x_\gamma)| \le \sup_{\alpha\in\mathfrak A}E^\alpha_{x,\xi} |v_{(\xi_{\gamma})}(x_{\gamma})|+\sup_{\alpha\in\mathfrak A}E^\alpha_{x,\xi}|X_{\gamma}|.$$

We first claim that
\begin{equation}\label{claimx}
\sup_{\alpha\in\mathfrak A}E^\alpha_{x,\xi}|X_{\gamma}|\le N\overline{\mathrm B}^{1/2}(x,\xi),
\end{equation}
where $N$ is independent of $\epsilon$, $T$, $n$.
Indeed, from the definition of $X_t$,
\begin{equation*}
|X_\gamma|\le|v|_{0,D}\big(|\xi_\gamma^{d+1}|+|\xi_\gamma^{d+2}|\big)\\
+|f|_{1,D}\bigg[\int_0^\gamma e^{-\phi_t}\Big(|\xi_t|+|\xi_t^{d+1}|+|\xi_t^{d+2}|+2|r_t|\Big)dt\bigg]
\end{equation*}
Notice that we have the following estimates:
\begin{align*}\allowdisplaybreaks
|v|_{0,D}\le& |g|_{0,D}+|\psi|_{0,D}\sup_{\alpha\in A}|f^\alpha|_{0,D}\\
\sup_{\alpha\in\mathfrak A}E^\alpha|\xi_\gamma^{d+1}|\le&K\sup_{\alpha\in\mathfrak A}E^\alpha\int_0^\gamma\Big(|\xi_t|+\frac{|\psi_{(\xi_t)}|}{\psi}\Big)dt\\
\le& N\overline{\mathrm B}^{1/2}(x,\xi)\\
\sup_{\alpha\in\mathfrak A}E^\alpha|\xi_\gamma^{d+2}|\le&3\sup_{\alpha\in\mathfrak A}E^\alpha\langle\xi^{d+2}\rangle_\gamma^{1/2}\\
\le& N\overline{\mathrm B}^{1/2}(x,\xi)\\
\sup_{\alpha\in\mathfrak A}E^\alpha\int_0^\gamma\big(|\xi_t|+|r_t|\big)dt\le&K\sup_{\alpha\in\mathfrak A}E^\alpha\int_0^\gamma\Big(|\xi_t|+\frac{|\psi_{(\xi_t)}|}{\psi}\Big)dt\\
\le& N\overline{\mathrm B}^{1/2}(x,\xi)\\
\sup_{\alpha\in\mathfrak A}E^\alpha\int_0^\gamma e^{-\phi_t}|\xi_t^{d+1}|dt\le&\sfa E^\alpha\int_0^\gamma2|r_t|dt\int_0^\gamma e^{-ct}cdt\\
\le&K\sup_{\alpha\in\mathfrak A}E^\alpha\int_0^\gamma\Big(|\xi_t|+\frac{|\psi_{(\xi_t)}|}{\psi}\Big)dt\\
\le& N\overline{\mathrm B}^{1/2}(x,\xi)\\
\sup_{\alpha\in\mathfrak A}E^\alpha\int_0^\gamma |\xi_t^{d+2}|dt\le&\sfa E^\alpha\gamma\sup_{t\le\gamma}|\xi_t^{d+2}|\\
\le&\sfa \big(E^\alpha \gamma^2\big)^{1/2} \big(E^\alpha \sup_{t\le\gamma}|\xi_t^{d+2}|^2\big)^{1/2}\\
\le&N\overline{\mathrm B}^{1/2}(x,\xi)
\end{align*}
Applying the estimates above, (\ref{claimx}) is proved.

We also claim that
\begin{equation}\label{250}
\begin{gathered}
\varlimsup_{n\uparrow\infty}\varlimsup_{T\uparrow\infty}\varlimsup_{\epsilon\downarrow0}\sup_{\alpha\in\mathfrak A}E^\alpha_{x,\xi} |v_{(\xi_{\gamma})}(x_{\gamma})|\\
\le\bigg(\sup_{\substack{y\in\{\psi=\delta\}\\\zeta\in\Rd\setminus\{0\}}}\frac{|v_{(\zeta)}(y)|}{\sqrt{\mathrm B_1(y,\zeta)}}+2\bigg)\sqrt{2\overline{\mathrm B}(x,\xi)}.
\end{gathered}
\end{equation}
Indeed, we notice that
\begin{align*}
\sup_{\alpha\in\mathfrak{A}}E^\alpha_{x,\xi}|v_{(\xi_{\gamma})}(x_{\gamma})|=&\sup_{\alpha\in\mathfrak{A}}E^\alpha_{x,\xi}\frac{|v_{(\xi_{\gamma})}(x_{\gamma})|}{\sqrt{\underline{\mathrm{B}}(x_{\gamma},\xi_{\gamma})}}\cdot\sqrt{\underline{\mathrm{B}}(x_{\gamma},\xi_{\gamma})}\\
\le&J_1(\epsilon, T,n)+J_2(\epsilon, T,n),
\end{align*}
where
\begin{align*}
J_1(\epsilon, T,n)=&\sup_{\alpha\in\mathfrak{A}}E^\alpha_{x,\xi}\bigg(\frac{|v_{(\xi_{\gamma})}(x_{\gamma})|}{\sqrt{\underline{\mathrm{B}}(x_{\gamma},\xi_{\gamma})}}-\frac{|v_{(\xi_{\gamma})}(x_{\tau_\delta})|}{\sqrt{\underline{\mathrm{B}}(x_{\tau_\delta},\xi_{\gamma})}}\bigg)\sqrt{\underline{\mathrm{B}}(x_{\gamma},\xi_{\gamma})},\\
J_2(\epsilon, T,n)=&\sup_{\alpha\in\mathfrak{A}}E^\alpha_{x,\xi}\frac{|v_{(\xi_{\gamma})}(x_{\tau_\delta})|}{\sqrt{\underline{\mathrm{B}}(x_{\tau_\delta},\xi_{\gamma})}}\sqrt{\underline{\mathrm{B}}(x_{\gamma},\xi_{\gamma})}.
\end{align*}
Note that
$$v_{(\xi)}(x)/\sqrt{\underline{\mathrm{B}}(x,\xi)}=v_{(\xi/|\xi|)}(x)/\sqrt{\underline{\mathrm{B}}(x,\xi/|\xi|)}$$
is a continuous function from $\bar D_\delta\times S_1$ to $\mathbb{R}$, where $S_1$ is the unit sphere in $\Rd$. By Weierstrass approximation theorem,  there exists a polynomial $W(x,\xi): \bar D_\delta\times S_1\rightarrow\mathbb{R}$, such that 
$$\sup_{x\in D_\delta,\xi\in S_1}\Big|\frac{v_{(\xi)}(x)}{\sqrt{\underline{\mathrm{B}}(x,\xi)}}-W(x,\xi)\Big|\le1.$$
It follows that
\begin{align*}
J_1(\epsilon,T,n)\le&\sup_{\alpha\in\mathfrak{A}}E^\alpha_{x,\xi}\big|W(x_{\gamma},\xi_{\gamma}/|\xi_\gamma|)-W(x_{\tau_\delta},\xi_{\gamma}/|\xi_\gamma|)\big|\sqrt{\underline{\mathrm{B}}(x_{\gamma},\xi_{\gamma})}\\
&+2\sup_{\alpha\in\mathfrak{A}}E^\alpha_{x,\xi}\sqrt{\underline{\mathrm{B}}(x_{\gamma},\xi_{\gamma})}\\
\le&(N/\delta)\sup_{\alpha\in\mathfrak{A}}E^\alpha_{x,\xi}|x_{\gamma}-x_{\tau_\delta}||\xi_\gamma|(\mathbbm1_{\tau_\delta\le\vartheta_n}+\mathbbm1_{\tau_\delta>\vartheta_n})+2\sqrt{2\overline{\B}(x,\xi)}\\
\le&(Nn/\delta)\sup_{\alpha\in\mathfrak{A}}E^\alpha_{x}\Big[(\tau_\delta-\gamma)+\sqrt{\tau_\delta-\gamma}\Big]+(N/\delta)\sup_{\alpha\in\mathfrak{A}}E^\alpha_{x,\xi}|\xi_\gamma|\mathbbm1_{\tau_\delta>\vartheta_n}\\
&+2\sqrt{2\overline{\B}(x,\xi)}.
\end{align*}
Notice that
\begin{align*}
E_x^\alpha(\tau_\delta-\gamma)\le&E(\tau_\delta^{\alpha,x}-\tau_\delta^{\alpha,x}\wedge\tau_\delta^{\alpha,x+\epsilon\xi})+E(\tau_\delta^{\alpha,x}-\tau_\delta^{\alpha,x}\wedge T),\\
E^\alpha_{\xi}|\xi_\gamma|\mathbbm1_{\tau_\delta>\vartheta_n}\le&\sqrt{E^\alpha_{x,\xi}|\xi_\gamma|^2}\sqrt{P^\alpha_{x,\xi}\Big(\sup_{t\le\tau_\delta}|\xi_t|\ge n\Big)}\le (1/n)E^\alpha_\xi\sup_{t\le\tau_\delta}|\xi_t|^2.
\end{align*}
Thus by Lemma \ref{markovtime} and Lemma \ref{applicable} (1),
$$\varlimsup_{n\uparrow\infty}\varlimsup_{T\uparrow\infty}\varlimsup_{\epsilon\downarrow0}J_1(\epsilon,T,n)\le2\sqrt{2\overline{\B}(x,\xi)}.$$
Also, notice that
\begin{align*}
J_2(\epsilon,T,n)\le&\sup_{\substack{y\in\{\psi=\delta\}\\\zeta\in  \Rd\setminus\{0\}}}\frac{|v_{(\zeta)}(y)|}{\sqrt{\mathrm{B}_1(y,\zeta)}}\cdot\sqrt{2\overline{\B}(x,\xi)}.
\end{align*}
Thus (\ref{250}) is proved.

Combining (\ref{claimx}) and (\ref{250}), we obtain
\begin{equation}\label{i2}
\varlimsup_{n\uparrow\infty}\varlimsup_{T\uparrow\infty}\varlimsup_{\epsilon\downarrow0}I_2(\epsilon,T,n)\le\bigg(\sup_{\substack{y\in \{\psi=\delta\}\\\zeta\in  \Rd\setminus\{0\}}}\frac{|v_{(\zeta)}(y)|}{\sqrt{\mathrm{B}_1(y,\zeta)}}+N\bigg)\sqrt{\overline{\B}(x,\xi)}.
\end{equation}

It remains to estimate
$$
\varlimsup_{\delta\downarrow0}\bigg(\sup_{\substack{x\in\{\psi=\delta\}\\\xi\in\Rd\setminus\{0\}}}\vxosbw\bigg).
$$
Due to the compactness of $(\partial D_\delta)\times S_1$, for each $\delta$, there exist $x(\delta)\in\partial D_\delta$ and $\xi(\delta)\in S_1$, such that
$$
\sup_{\substack{x\in\{\psi=\delta\}\\\xi\in\Rd\setminus\{0\}}}\vxosbw=\frac{|v_{(\xi(\delta))}(x(\delta))|}{\sqrt{\mathrm{B}_1(x(\delta),\xi(\delta))}}.
$$
A subsequence of $(x(\delta),\xi(\delta))$ converges to some $(y,\zeta)$, where $y\in\partial D$ and $|\zeta|=1$.

If $\psi_{(\zeta)}(y)\ne0$, then $\mathrm{B}_1(x(\delta),\xi(\delta))\rightarrow\infty$ as $\delta\downarrow0$. In this case,
$$
\varlimsup_{\delta\downarrow0}\bigg(\sup_{\substack{x\in\{\psi=\delta\}\\\xi\in\Rd\setminus\{0\}}}\vxosbw\bigg)=\varlimsup_{\delta\downarrow0}\frac{|v_{(\xi(\delta))}(x(\delta))|}{\sqrt{\mathrm{B}_1(x(\delta),\xi(\delta))}}=0.
$$

If $\psi_{(\zeta)}(y)=0$, then $\zeta$ is tangent to $\partial D$ at $y$. In this case,
\begin{equation*}\label{lims}
\varlimsup_{\delta\downarrow0}\bigg(\sup_{\substack{x\in\{\psi=\delta\}\\\xi\in\Rd\setminus\{0\}}}\vxosbw\bigg)=\varlimsup_{\delta\downarrow0}\frac{|v_{(\xi(\delta))}(x(\delta))|}{\sqrt{\mathrm{B}_1(x(\delta),\xi(\delta))}}=\frac{|g_{(\zeta)}(y)|}{\lambda}.
\end{equation*}
Therefore for all $x\in D$ and $\xi\in\Rd$, we have
\begin{align*}
-v_{(\xi)}(x)\le N\sqrt{\mathbbm1_{x\in D_\delta^\lambda}B_1(x,\xi)+\mathbbm1_{x\in D_{\lambda^2}}B_2(x,\xi)}.
\end{align*}

Substituting $\xi$ with $-\xi$ completes the proof of the inequality (\ref{e1}).
\end{proof}

\section{Proof of Theorem {\ref{t3}}}\label{sub6}

To estimate the second derivatives of $v$, we don't need to take effort on making the second quasiderivatives tangent to the boundary when the state process exits the domain. This is due to the following lemma.

\begin{lemma}\label{lemma4}
If $f^\alpha, g\in C^2(\bar D)$, and $v\in C^1(\bar D)$, then for any $y\in\partial D$,
\begin{equation}
|v_{(n)}(y)|\le K\Big(|g|_{2,D}+\sup_{\alpha\in A}|f^\alpha|_{0,D}\Big)\label{normal},
\end{equation}
where $n$ is the unit inward normal on $\partial D$ and the constant $K$ depends only on $K_0, d, d_0$ and $D$.
\end{lemma}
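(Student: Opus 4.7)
The plan is to reduce the inward normal derivative bound to the quantitative boundary-decay estimate $|v(x) - g(x)| \le K M \psi(x)$ for all $x \in \bar D$, where $M := |g|_{2,D} + \sup_{\alpha \in A} |f^\alpha|_{0,D}$. Once this is established, the conclusion follows from a one-sided Taylor expansion at $y$.

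To obtain the decay estimate, I would apply It\^o's formula to $g(x_t^{\alpha,x}) e^{-\phi_t^{\alpha}}$, which is legitimate since $g \in C^2(\bar D)$, together with the optional stopping theorem at $\tau = \tau^{\alpha,x}$. The integrability needed for optional stopping is provided by Lemma \ref{taun}, and since the integrands involved are bounded on $\bar D$, the stochastic-integral term has zero expectation at $\tau$. This gives
\begin{equation*}
E_x^\alpha \big[g(x_\tau) e^{-\phi_\tau}\big] = g(x) + E_x^\alpha \int_0^\tau e^{-\phi_s}\big[L^{\alpha_s} g(x_s) - c^{\alpha_s} g(x_s)\big]\, ds.
\end{equation*}
Subtracting this identity from the probabilistic representation (\ref{v}) and taking the supremum yields
\begin{equation*}
v(x) - g(x) = \sup_{\alpha \in \mathfrak A} E_x^\alpha \int_0^\tau e^{-\phi_s}\big[L^{\alpha_s} g(x_s) - c^{\alpha_s} g(x_s) + f^{\alpha_s}(x_s)\big]\, ds.
\end{equation*}
Since $\|\sigma\|_{0,A}, |b|_{0,A}, |c|_{0,A} \le K_0$, the integrand is pointwise bounded in absolute value by $K M$ with $K = K(K_0, d)$, and Lemma \ref{taun} with $n = 1$ gives $E_x^\alpha \tau \le \psi(x)$. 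Combining these, $|v(x) - g(x)| \le K M \psi(x)$ for all $x \in \bar D$, uniformly in $\alpha$.

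For the conclusion, fix $y \in \partial D$ and set $x(t) := y + t n$ for small $t > 0$; since $n$ is the inward unit normal and $\partial D$ is $C^3$, $x(t) \in D$. Using $v(y) = g(y)$ together with $v \in C^1(\bar D)$ and $g \in C^2(\bar D)$, I write
\begin{equation*}
t\, v_{(n)}(y) + o(t) = v(x(t)) - v(y) = \big[g(x(t)) - g(y)\big] + \big[v(x(t)) - g(x(t))\big].
\end{equation*}
The first bracket equals $t\, g_{(n)}(y) + O(t^2)$ with implicit constant controlled by $|g|_{2,D}$, and the second has absolute value at most $K M \psi(x(t))$; since $\psi \in C^3(\bar D)$ with $\psi(y) = 0$ and $|\psi_x|_{0,D} \le K_0$, a Taylor expansion gives $\psi(x(t)) \le K_0 t + O(t^2)$. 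Dividing by $t$ and letting $t \downarrow 0$ yields $|v_{(n)}(y)| \le |g_{(n)}(y)| + K K_0 M \le K\big(|g|_{2,D} + \sup_{\alpha \in A}|f^\alpha|_{0,D}\big)$, which is the desired bound.

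No substantive obstacle is anticipated; this is essentially a clean application of It\^o's formula followed by a first-order Taylor expansion at the boundary. The only care needed is in tracking constants: the bound on the integrand in $v - g$ must depend only on $|g|_{2,D}$, $\sup_\alpha |f^\alpha|_{0,D}$ and the uniform bounds on $\sigma^\alpha$, $b^\alpha$, $c^\alpha$, and the bound on $E_x^\alpha \tau$ coming from Lemma \ref{taun} is already uniform in $\alpha$, so the final constant depends only on $K_0, d, d_1, D$ as required.
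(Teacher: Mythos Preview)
Your proof is correct and follows essentially the same route as the paper: apply It\^o's formula to $g(x_t)e^{-\phi_t}$, bound the resulting integrand by $K(|g|_{2,D}+\sup_\alpha|f^\alpha|_{0,D})$, invoke $E_x^\alpha\tau\le\psi(x)$ from Lemma~\ref{taun}, and pass to the limit in the difference quotient at $y$. The only cosmetic difference is that you write the two-sided bound $|v-g|\le KM\psi$ in one stroke via the identity $v(x)-g(x)=\sup_\alpha E_x^\alpha\int_0^\tau e^{-\phi_s}[L^{\alpha_s}g-c^{\alpha_s}g+f^{\alpha_s}]\,ds$, whereas the paper handles the lower bound ($v\ge v^\alpha$) and the upper bound (via a near-optimal control $\alpha(\theta)$, then $\theta\downarrow0$) separately; your packaging is slightly cleaner.
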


\begin{proof}
Fix a $y\in\partial D$, and choose $\ve_0>0$ so that $y+\ve n\in D$ as long as $0<\ve\le\ve_0$. Let $x:=y+\ve n$. For any $\alpha\in \mathfrak A$, 
\begin{equation}\label{normal1}
E^\alpha_x\big[g(x_\tau)e^{-\phi_\tau}\big]=g(x)+E^\alpha_x\bigg[\int_0^\tau\big(Lg(x_t)-c(x_t)g(x_t)\big)e^{-\phi_t}dt\bigg]
\end{equation}
From (\ref{v}), we have
\begin{equation}\label{normal2}
v(x)\ge v^\alpha(x)=E^\alpha_x\bigg[g(x_\tau)e^{-\phi_\tau}+\int_0^\tau f^\alpha(x_t)e^{-\phi_t}dt\bigg]
\end{equation}
Combining (\ref{normal1}) and (\ref{normal2}),
\begin{align}
v(x)\ge&g(x)+E_x^\alpha\int_0^\tau e^{-\phi_t}\big[(L-c)g(x_t)+f(x_t)\big]dt\nonumber\\
\ge&g(x)-(|Lg|_{0,D}+|c|_{0,D}|g|_{0,D}+|f^\alpha|_{0,D})E_x^\alpha\tau\label{50}\\
\ge&g(x)-K(|g|_{2,D}+\sup_{\alpha\in A}|f^\alpha|_{0,D})\psi(x)\nonumber.
\end{align}
Notice that $u(y)=g(y)$ and $\psi(y)=0$, so we have
\begin{align*}
\frac{v(y+\epsilon n)-v(y)}{\epsilon}\ge&\frac{g(y+\epsilon n)-g(y)}{\epsilon}-K(|g|_{2,D}+\sup_{\alpha\in A}|f^\alpha|_{0,D})\frac{\psi(y+\epsilon n)-\psi(y)}{\epsilon}.
\end{align*}
Letting $\ve\downarrow0$, we get
$$v_{(n)}(y)\ge -K(|g|_{2,D}+\sup_{\alpha\in A}|f^\alpha|_{0,D}).$$
To estimate $v_{(n)}(y)$ from above we first notice that for any $\theta>0$, there exists $\alpha(\theta)\in\mathfrak A$, such that
$$v(x)\le v^{\alpha(\theta)}(x)+\theta.$$
A sequence of inequalities similar to (\ref{50}) implies that
$$v(x)\le g(x)+K(|g|_{2,D}+\sup_{\alpha\in A}|f^\alpha|_{0,D})\psi(x)+\theta$$
It remains to let $\theta\downarrow 0$ and then mimic the argument after (\ref{50}).

\end{proof}

\begin{proof}[Proof of the first inequality in (\ref{e3})]

The idea is similar to that in the first order case. Fix $x\in D_\delta$, $\xi\in\Rd$ and a sufficiently small positive $ \epsilon_0$, so that $B(x,\epsilon_0|\xi|)\subset D_\delta$. Repeating the argument of deriving (\ref{vxplusex}), we obtain a similar representation of a lower bound of $v(x+\epsilon\xi)$, namely, for any stopping time $\tau$ satisfying $\tau\le\tau_{\delta}^{\alpha,x+ \epsilon\xi}\wedge T$,
\begin{equation*}
v(x+\epsilon\xi)\ge\sup_{\alpha\in\mathfrak{A}}E^\alpha_{x+\epsilon\xi}\bigg[v(z_{\tau}( \epsilon))\hat p_{\tau}( \epsilon) e^{-{\hat \phi}_{\tau}( \epsilon)}+\hat q_\tau(\epsilon)\bigg],
\end{equation*}
where $z_t^{\alpha,z}(\epsilon)$ is defined by (\ref{itoz}) and
\begin{align}
&\hat\phi_t^{\alpha}( \epsilon)=\int_0^t\hat\theta_t^\alpha(\epsilon)c^{\alpha}ds,\\
&\hat p_t^\alpha( \epsilon)=\exp\bigg(\int_0^t \Big(\epsilon\pi^{\alpha}_s+\frac{\epsilon^2\hat\pi^\alpha_s}{2}\Big)dw_s-\frac{1}{2}\int_0^t\Big| \epsilon\pi^{\alpha}_s+\frac{\epsilon^2\hat \pi^\alpha_s}{2}\Big|^2ds\bigg),\\
&\hat q_t^{\alpha,z}(\epsilon)=\int_0^{t}\hat\theta_t^\alpha(\epsilon)f^{\alpha_s}(z^{\alpha,z}_s( \epsilon))\hat p_s( \epsilon)e^{-\hat\phi^\alpha_s( \epsilon)}ds,\label{hatq}
\end{align}
with $r_s^{\alpha}, \pi_s^{\alpha}, P_s^{\alpha}, \hat r_s^{\alpha}, \hat\pi_s^{\alpha}, \hat P_s^{\alpha}$ defined in Lemma \ref{applicable}. We also define
\begin{align*}
&\bar z_t^{\alpha,z}(\epsilon)=(z_t^{\alpha,z}(\epsilon), \hat\phi_t^{\alpha}(\epsilon),\hat p_t^\alpha(\epsilon),\hat q_t^{\alpha,y}(\epsilon)),\qquad\bar x_t^{\alpha,x}=\bar z_t^{\alpha,x}(0).
\end{align*}
Recall that $V$ is defined by (\ref{V}). Therefore, for the stopping times 
$$\gamma^\alpha=\gamma^\alpha(\epsilon, T, n)=:\hat\tau_{\delta}^{\alpha,x+ \epsilon\xi}\wedge\tau_{\delta}^{\alpha,x}\wedge\hat\tau_{\delta}^{\alpha,x- \epsilon\xi}\wedge T\wedge\vartheta^{\alpha,\xi}_n,$$ 
we have
\begin{align}
&-\frac{v(x+ \epsilon\xi)-2v(x)+v(x- \epsilon\xi)}{ \epsilon^2}\label{g1g2}\\
\le&\frac{1}{\epsilon^2}\Big(-\sup_{\alpha\in\mathfrak A}E^\alpha_{x+\epsilon\xi}V(\bar z_\gamma(\epsilon)+2\sup_{\alpha\in\mathfrak A}E^\alpha_xV(\bar x_\gamma)-\sup_{\alpha\in\mathfrak A}E^\alpha_{x-\epsilon\xi}V(\bar z_\gamma(-\epsilon)\Big)\nonumber\\
\le&\sup_{\alpha\in\mathfrak A}\frac{-E^\alpha_{x+\epsilon\xi}V(\bar z_\gamma(\epsilon)+2E^\alpha_x V(\bar x_\gamma)- E^\alpha_{x-\epsilon\xi}V(\bar z_\gamma(-\epsilon)}{\epsilon^2}\nonumber\\
\le&G_1(\epsilon, T,n)+G_2(\epsilon,T,n).\nonumber
\end{align}
Here
\begin{align*}
G_1(\epsilon,T,n)=&\sup_{\alpha\in\mathfrak A}E\bigg|\frac{-V(\bar z^{\alpha,x+\epsilon\xi}_{\gamma^\alpha}(\epsilon)+2 V(\bar x^{\alpha,x}_{\gamma^\alpha})- V(\bar z^{\alpha,x-\epsilon\xi}_{\gamma^\alpha}(-\epsilon))}{\epsilon^2}\\
&+V_{(\bar \eta^{\alpha,0}_{\gamma^\alpha})}(\bar x^{\alpha,x}_{\gamma^\alpha})+V_{(\bar \xi^{\alpha,\xi}_{\gamma^\alpha})(\bar \xi^{\alpha,\xi}_{\gamma^\alpha})}(\bar x^{\alpha,x}_{\gamma^\alpha})\bigg|,\\
G_2(\epsilon,T,n)=&\sup_{\alpha\in\mathfrak A}E\Big(-V_{(\bar \eta^{\alpha,0}_{\gamma^\alpha})}(\bar x^{\alpha,x}_{\gamma^\alpha})-V_{(\bar \xi^{\alpha,\xi}_{\gamma^\alpha})(\bar \xi^{\alpha,\xi}_{\gamma^\alpha})}(\bar x^{\alpha,x}_{\gamma^\alpha})\Big),
\end{align*}
where $\bar\xi^{\alpha,\xi}_t$ is defined by (\ref{xibar}), and
\begin{align*}
\bar\eta^{\alpha,0}_t:=(\eta_t^{\alpha,0}, \eta^{d+1,\alpha}_t, \eta^{d+2,\alpha}_t, \eta^{d+3,\alpha}_t),
\end{align*}
with $\eta_t^{\alpha,\eta}$ defined by (\ref{itoeta}) and
\begin{align}
\eta^{d+1,\alpha}_t=&\int_0^t2\hat r^\alpha_sc^\alpha ds,\\
\eta^{d+2,\alpha}_t=&\Big(\int_0^t\pi_s^\alpha dw_s\Big)^2-\int_0^t|\pi^\alpha_s|^2ds+\int_0^t\hat\pi_sdw_s\Big(=\tilde\eta^{\alpha,0}_t\Big),\\
\eta^{d+3,\alpha}_t=&\int_0^te^{-\phi_s^{\alpha,x}}\Big\{f^{\alpha_s}_{(\xi_s^{\alpha,\xi})(\xi_s^{\alpha,\xi})}(x_s^{\alpha,x})+f_{(\eta_s^{\alpha,\eta})}(x_s^{\alpha,x})\label{etad3}\\
&+2\big[2r^\alpha_s-\xi_s^{d+1,\alpha}+\xi_s^{d+2,\alpha}\big]f^{\alpha_s}_{(\xi_s^{\alpha,\xi})}(x_s^{\alpha,x})\nonumber\\
&+\big[2\hat r^\alpha_s-4r^\alpha_s(\xi_s^{d+1,\alpha}-\xi_s^{d+2,\alpha})+(\xi_s^{d+1,\alpha})^2\nonumber\\
&-\eta_s^{d+1,\alpha}-2\xi_t^{d+1,\alpha}\xi_t^{d+2,\alpha}+\eta_t^{d+2,\alpha}\big] f^{\alpha_s}(x_s^{\alpha,x})\Big\}ds.\nonumber
\end{align}

By Lemma \ref{thm2}, to obtain
\begin{equation}\label{G1}
\lim_{\epsilon\downarrow0}G_1(\epsilon, T,n)=0,\end{equation}
it suffices to show that
\begin{equation}
\lim_{\epsilon\downarrow0}\bigg(\sup_{\alpha\in\mathfrak A}E\sup_{t\le\gamma}\Big|\frac{\bar z_t^{\alpha,x+\epsilon\xi}(\epsilon)-\bar x_t^{\alpha,x}}{\epsilon}-\bar\xi_t^{\alpha,\xi}\Big|\bigg)=0,\label{barlimz}
\end{equation}
\begin{equation}
\lim_{\epsilon\downarrow0}\bigg(\sup_{\alpha\in\mathfrak A}E\sup_{t\le\gamma}\Big|\frac{\bar z_t^{\alpha,x+\epsilon\xi}(\epsilon)-2\bar x_t^{\alpha,x}+\bar z_t^{\alpha, x-\epsilon\xi}(-\epsilon)}{\epsilon^2}-\bar\eta_t^{\alpha,\xi}\Big|\bigg)=0.\label{barlimzz}
\end{equation}
The convergence result (\ref{barlimz}) can be established by the same way of obtaining (\ref{barlimy}). The convergence result (\ref{barlimzz}) can be proved by showing the same convergence result for each component of the quantity in the absolute value symbol.

The convergence of the first $d$ components is exactly (\ref{res2b3}) which has already been verified. The ($d+1$)-th component is true since
$$\bigg|\frac{\hat \theta_t^{\alpha}(\epsilon)- 2+\hat\theta_t^\alpha(-\epsilon)}{\epsilon^2}-2\hat r_t^{\alpha}\bigg|=\frac{\epsilon}{3}|\hat\theta_t'''(\epsilon')|\le \epsilon C(|r_t^\alpha|^4+|\hat r_t^\alpha|^4).$$
Next, we notice that
\begin{align*}
\frac{\hat p_t(\epsilon)-2\hat p_t(0)+\hat p_t(-\epsilon)}{\epsilon^2}=&\int_0^t\Big(\frac{\hat p_s(\epsilon)-\hat p_s(-\epsilon)}{\epsilon}\pi_s+\frac{\hat p_s(\epsilon)+\hat p_s(-\epsilon)}{2}\hat \pi_s\Big)dw_s,\\
\eta_t^{d+2}=&\int_0^t\big(2\tilde\xi_s\pi_s+\hat\pi_s\big)dw_s.
\end{align*}
It follows that
\begin{align*}
\frac{\hat p_t(\epsilon)-2+\hat p_t(-\epsilon)}{\epsilon^2}-\eta_t^{d+2}=&\int_0^t\mathfrak p_s\pi_sdw_s+\epsilon\int_0^t\mathfrak q_s\hat \pi_sdw_s,
\end{align*}
where
$$\mathfrak p_s=\mathfrak p_s(\epsilon)=\frac{\hat p_s(\epsilon)-\hat p_s(-\epsilon)}{\epsilon}-2\tilde\xi_s, \ \  \mathfrak q_s=\mathfrak q_s(\epsilon)=\frac{\hat p_s(\epsilon)-2+\hat p_s(-\epsilon)}{2\epsilon}.$$
Recall that 
$$\sfa E^\alpha\int_0^\gamma|\hat\pi_t|^2dt\le \sfa E^\alpha\int_0^\gamma|\pi_t|^4dt<\infty.$$
By the triangle inequality, Davis inequality and then H\"older inequality, we have
\allowdisplaybreaks\begin{align*}
&E^\alpha\sup_{t\le\gamma}\Big|\frac{\hat p_t(\epsilon)- 2+\hat p_t(-\epsilon)}{\epsilon^2}-\eta_t^{d+2}\Big|\\
\le&E^\alpha\sup_{t\le\gamma}\Big|\int_0^t\mathfrak p_s\pi_sdw_s\Big|+\epsilon E^\alpha\sup_{t\le\gamma}\Big|\int_0^t\mathfrak q_s\hat \pi_sdw_s\Big|\\
\le&3E^\alpha\Big(\int_0^\gamma\mathfrak p_t^2|\pi_t|^2dt\Big)^{1/2}+3\epsilon E^\alpha\Big(\int_0^\gamma\mathfrak q_t^2|\hat \pi_t|^2dt\Big)^{1/2}\\
\le&3E^\alpha\sup_{t\le\gamma}|\mathfrak p_t|\Big(\int_0^\gamma|\pi_t|^2dt\Big)^{1/2}+3\epsilon E^\alpha\sup_{t\le\gamma}|\mathfrak q_t|\Big(\int_0^\gamma|\hat \pi_t|^2dt\Big)^{1/2}\\
\le&3\Big(E^\alpha\sup_{t\le\gamma}|\mathfrak p_t|^2\Big)^{1/2}\Big(E^\alpha\int_0^\gamma|\pi_t|^2dt\Big)^{1/2}+3\epsilon\Big(E^\alpha\sup_{t\le\gamma}|\mathfrak q_t|^2\Big)^{1/2}\Big(E^\alpha\int_0^\gamma|\hat \pi_t|^2dt\Big)^{1/2}.
\end{align*}
Then we first take $\sfa$ on both sides of the inequality, then let $\epsilon\downarrow0$ and notice that
$$\lim_{\epsilon\downarrow0}\sfa E^\alpha\sup_{t\le\gamma}|\mathfrak p_t|^2=\lim_{\epsilon\downarrow0}\sfa E^\alpha\sup_{t\le\gamma}|\mathfrak q_t|^2=0,$$
therefore the convergence of the ($d+2$)-th component is proved.

For the last component, we recall the function $F^\alpha$ defined in (\ref{F}). From (\ref{hatq}) and (\ref{etad3}), we have
\allowdisplaybreaks\begin{align*}
\frac{ \hat q_t^{\alpha,x+\epsilon\xi}(\epsilon)- 2q_t^{\alpha,x}+\hat q_t^{\alpha,x-\epsilon\xi}(-\epsilon)}{\epsilon^2}=&\int_0^t\bigg[\frac{F^\alpha(\bar z_s^{\alpha,z+\epsilon\xi}(\epsilon))-2F^\alpha(\bar x_s^{\alpha,x})+F^\alpha(\bar z^{\alpha,z-\epsilon\xi}_s(-\epsilon))}{\epsilon^2}\\
&+2r^\alpha_s\frac{F^\alpha(\bar z_s^{\alpha,z+\epsilon\xi}(\epsilon))-F^\alpha(\bar z_s^{\alpha,z-\epsilon\xi}(-\epsilon))}{\epsilon}\\
&+\hat r^\alpha_s\Big(F^\alpha(\bar z_s^{\alpha,z+\epsilon\xi}(\epsilon))+F^\alpha(\bar z_s^{\alpha,z-\epsilon\xi}(-\epsilon))\Big)+o(\epsilon)\bigg]ds\\
\eta_t^{d+3,\alpha}=\int_0^t\Big[F^\alpha_{(\bar\xi_s^{\alpha,\xi})(\bar\xi_s^{\alpha,\xi})}(\bar x_s^{\alpha,x})+&F^\alpha_{(\bar\eta_s^{\alpha,0})}(\bar x_s^{\alpha,x})+4r_s^\alpha F^\alpha_{(\bar\xi_s^{\alpha,\xi})}(\bar x_s^{\alpha,x})+2\hat r_s^\alpha F^\alpha(\bar x_s^{\alpha,x})\Big]ds
\end{align*}
Recall that $|r^\alpha_t|$ and $|\hat r^\alpha_t|$ ($t\le\gamma^\alpha$) is uniformly bounded with respect to $\alpha$. Hence it suffices to show that
\begin{align*}\label{lim55}
\lim_{\epsilon\downarrow0}\bigg(\sup_{\alpha\in\mathfrak A}E\sup_{t\le\gamma}\Big|&\frac{F^\alpha(\bar z_t^{\alpha,z+\epsilon\xi}(\epsilon))-2F^\alpha(\bar x_t^{\alpha,x})+F^\alpha(\bar z^{\alpha,z-\epsilon\xi}_t(-\epsilon))}{\epsilon^2}\\
&-F^\alpha_{(\bar\xi_t^{\alpha,\xi})(\bar\xi_t^{\alpha,\xi})}(\bar x_t^{\alpha,x})-F^\alpha_{(\bar\eta_t^{\alpha,0})}(\bar x_t^{\alpha,x})\Big|\bigg)=0,
\end{align*}
\begin{equation*}\label{lim66}
\begin{gathered}
\lim_{\epsilon\downarrow0}\bigg(\sup_{\alpha\in\mathfrak A}E\sup_{t\le\gamma}\Big|\frac{F^\alpha(\bar z_t^{\alpha,z+\epsilon\xi}(\epsilon))-F^\alpha(\bar z_t^{\alpha,z-\epsilon\xi}(-\epsilon))}{\epsilon}-2F^\alpha_{(\bar\xi_t^{\alpha,\xi})}(\bar x_t^{\alpha,x})\Big|\bigg)=0,
\end{gathered}
\end{equation*}
\begin{equation*}\label{lim77}
\begin{gathered}
\lim_{\epsilon\downarrow0}\bigg(\sup_{\alpha\in\mathfrak A}E\sup_{t\le\gamma}\Big|F^\alpha(\bar z_s^{\alpha,z+\epsilon\xi}(\epsilon))+F^\alpha(\bar z_s^{\alpha,z-\epsilon\xi}(-\epsilon))-2F^\alpha(\bar x_s^{\alpha,x})\Big|\bigg)=0.
\end{gathered}
\end{equation*}
which are true due to (\ref{barlimz}), (\ref{barlimzz}) and Lemma \ref{thm2}. 

We have obtained (\ref{G1}). We next estimate $G_2(\epsilon, T,n)$. From (\ref{V}),
$$V_{(\bar\xi^{\alpha,\xi}_t)(\bar\xi^{\alpha,\xi}_t)}(\bar x^{\alpha,x}_t)+V_{(\bar\eta^{\alpha,0}_t)}(\bar x^{\alpha,x}_t)=e^{-\phi_t^{\alpha,x}}v_{(\xi_t^{\alpha,\xi})(\xi_t^{\alpha,\xi})}(x_t^{\alpha,x})+Y_t^{\alpha,x,\xi,0},$$
where
\begin{align*}
Y_t^{\alpha,x,\xi,0}=&e^{-\phi_t^{\alpha,x}}\Big[v_{(\eta_t^{\alpha,0})}(x_t^{\alpha,x})+2v_{(\xi_t^{\alpha,\xi})}(x_t^{\alpha,x})\big(\xi_t^{d+2,\alpha}-\xi_t^{d+1,\alpha}\big)\\
&+v(x_t^{\alpha,x})\big((\xi_t^{\alpha,\xi})^2-2\xi_t^{d+1}\xi_t^{d+2}+\eta_t^{d+2,\alpha}-\eta_t^{d+1,\alpha}\big)\Big]+\eta_t^{d+3,\alpha}.
\end{align*}
Note that based on our construction of $\hat\pi_t^\alpha$, we have $\eta_t^{d+2,\alpha}=0$. 

It follows that
\begin{align*}
G_2(\epsilon,T,n)=&\sup_{\alpha\in\mathfrak A}E^\alpha_{x,\xi,0}\Big(-V_{(\bar \eta_{\gamma})}(\bar x_{\gamma})-V_{(\bar \xi_{\gamma})(\bar \xi_{\gamma})}(\bar x_{\gamma})\Big)\\
\le& \sup_{\alpha\in\mathfrak A}E^\alpha_{x,\xi} \big(-e^{-\phi_\gamma}v_{(\xi_{\gamma})(\xi_{\gamma})}(x_{\gamma})\big)+\sup_{\alpha\in\mathfrak A}E^\alpha_{x,\xi,0}|Y_{\gamma}|.
\end{align*}
We first claim that
\begin{align}\label{yy}
\sup_{\alpha\in\mathfrak A}E^\alpha_{x,\xi,0}|Y_{\gamma}|\le N\overline{\B}(x,\xi),
\end{align}
where the constant $N$ is independent of $\epsilon, T$ and $n$. Indeed, we recall from Theorem \ref{t1} and Lemma \ref{lemma4} that
\begin{align*}
|v(x)|\le|g|_{0,D}+\psi(x)\sup_{\alpha\in A}|f^\alpha|_{0,D},\ \  |v_{(\xi)}(x)|\le K(|g|_{2,D}+\sup_{\alpha\in A}|f^\alpha|_{1,D})|\xi|.
\end{align*}
Therefore, from the definition of $Y_t$, to prove the estimate (\ref{yy}), it suffices to show that the inequality
$$\sfa E^\alpha\zeta_{\gamma}\le N\overline{\B}(x,\xi)$$
is true if the stochastic process $\zeta^\alpha_t$ is any of the following:
$$|\xi_t^{\alpha,\xi}|^2,\ |\xi_t^{d+1,\alpha}|^2,\ |\xi_t^{d+2,\alpha}|^2,\ |\eta_t^{\alpha,0}|,\ |\eta_t^{d+1,\alpha}|,\ |\eta_t^{d+3,\alpha}|.$$
Applying H\"older inequality, we have
\begin{align*}
\sfa E^\alpha_{\xi}|\xi_\gamma|^2\le&\sfa E^\alpha_{\xi}\sup_{t\le\gamma}|\xi_t|^2\le N\overline{\B}(x,\xi),\\
\sfa E^\alpha|\xi^{d+1}_\gamma|^2\le& K\sfa E^\alpha\int_0^\gamma|r_t|^2dt\le N\overline{\B}(x,\xi),\\
\sfa E^\alpha|\xi^{d+2}_\gamma|^2=&\sfa E^\alpha\int_0^\gamma|\pi_t|^2dt\le N\overline{\B}(x,\xi),\\
\sfa E^\alpha_{0}|\eta_\gamma|\le&\sfa E^\alpha_{0}\sup_{t\le\gamma}|\eta_t|\le N\overline{\B}(x,\xi),\\
\sfa E^\alpha|\eta^{d+1}_\gamma|\le&K\sfa E^\alpha\int_0^\gamma|\hat r_t|dt\le N\overline{\B}(x,\xi).
\end{align*}
It remains to show that
$$\sfa E^\alpha|\eta^{d+3}_\gamma|\le N\overline{\B}(x,\xi).$$
From the definition of $\eta^{d+3,\alpha}_t$, it suffices to show that the inequality
$$\sfa E^\alpha\int_0^\gamma e^{-\phi_t}\zeta_tdt\le N\overline{\B}(x,\xi)$$
is true if the stochastic process $\zeta^\alpha_t$ is any of the following:
\begin{equation*}
\begin{gathered}
|r^\alpha_t|^2,\ |\hat r^\alpha_t|,\ |\xi_t^{\alpha,\xi}|^2,\ |\eta_t^{\alpha,0}|,\ |\xi_t^{d+1,\alpha}\xi_t^{\alpha,\xi}|, |\xi_t^{d+1,\alpha}r_t^\alpha|,\\
|\xi_t^{d+2,\alpha}\xi_t^{\alpha,\xi}|, \ |\xi_t^{d+2,\alpha}r_t^\alpha|, \ |\xi_t^{d+1,\alpha}\xi_t^{d+2,\alpha}|, \ |\eta_t^{d+1,\alpha}|.
\end{gathered}
\end{equation*}
Applying H\"older inequality, we obtain
\allowdisplaybreaks\begin{align*}
\MoveEqLeft\sfa E^\alpha_\xi\int_0^\gamma \big(r_t^2+ |\hat r_t|+|\xi_t|^2\big)dt\\
&\le K\sfa E_\xi^\alpha\int_0^\gamma\Big(|\xi_t|^2+\pxtsops\Big)dt\\
&\le N\overline{\B}(x,\xi),\\
\MoveEqLeft\sfa E^\alpha_0\int_0^\gamma |\eta_t|dt\\
&\le \sfa E^\alpha_0\Big(\int_0^\gamma |\eta_t|^2dt\Big)^{1/2}\\
&\le N\overline{\B}(x,\xi),\\
\MoveEqLeft\sfa E^\alpha_\xi\int_0^\gamma e^{-\phi_t}|\xi_t^{d+1}|\big(|\xi_t|+|r_t|\big)dt\\
&\le\sfa E^\alpha_\xi\Big(\int_0^\gamma e^{-ct}cdt\int_0^\gamma\big(|\xi_t|^2+3r_t^2\big)dt\Big)\\
&\le N\overline{\B}(x,\xi),\\
\MoveEqLeft\sfa E^\alpha_\xi\int_0^\gamma |\xi_t^{d+2}|\big(|\xi_t|+|r_t|\big)dt\\
&\le\sfa E^\alpha_\xi\Big(\sup_{t\le\gamma}|\xi_t^{d+2}|\int_0^\gamma \big(|\xi_t|+|r_t|\big)dt\Big)\\
&\le\sfa \Big(E^\alpha_\xi\sup_{t\le\gamma}|\xi_t^{d+2}|^2\Big)^{1/2}\Big(E^\alpha_\xi\int_0^\gamma \big(|\xi_t|+|r_t|\big)dt\Big)^{1/2}\\
&\le N\overline{\B}(x,\xi),\\
\MoveEqLeft\sfa E^\alpha_\xi\int_0^\gamma e^{-\phi_t}|\xi_t^{d+1}\xi_t^{d+2}|dt\\
&\le\sfa E^\alpha_\xi\Big(\sup_{t\le\gamma}|\xi_t^{d+1}|\int_0^\gamma e^{-\phi_t}|\xi_t^{d+2}|dt\Big)\\
&\le\sfa E^\alpha_\xi\Big(\sup_{t\le\gamma}|\xi_t^{d+1}|\int_0^\gamma e^{-ct}cdt\int_0^\gamma 2|r_t|dt\Big)\\
&\le\sfa \Big(E^\alpha_\xi\sup_{t\le\gamma}|\xi_t^{d+2}|^2\Big)^{1/2}\Big(E^\alpha_\xi\int_0^\gamma 2|r_t|dt\Big)^{1/2}\\
&\le N\overline{\B}(x,\xi),\\
\MoveEqLeft\sfa E^\alpha\int_0^\gamma e^{-\phi_t}|\eta_t^{d+1}|dt\\
&\le\sfa E^\alpha\Big(\int_0^\gamma e^{-ct}cdt\int_0^\gamma2|\hat r_t|dt\Big)\\
&\le N\overline{\B}(x,\xi).
\end{align*}
Gather all these estimates, (\ref{yy}) is proved.

We also claim that
\begin{equation}\label{350}
\begin{gathered}
\varlimsup_{n\uparrow\infty}\varlimsup_{T\uparrow\infty}\varlimsup_{\epsilon\downarrow0}\sup_{\alpha\in\mathfrak A}E^\alpha_{x,\xi} \big(-e^{-\phi_\gamma}v_{(\xi_{\gamma})(\xi_{\gamma})}(x_{\gamma})\big)\\
\le\bigg(\sup_{y\in\partial D_\delta^{\lambda},\zeta\in\Rd\setminus\{0\}}\frac{|v_{(\zeta)(\zeta)}(y)|}{\mathrm B_1(y,\zeta)}+2\bigg)2\overline{\B}(x,\xi).
\end{gathered}
\end{equation}

First, we have
\begin{align*}
\sup_{\alpha\in\mathfrak A}E^\alpha_{x,\xi} \big(-e^{-\phi_\gamma}v_{(\xi_{\gamma})(\xi_{\gamma})}(x_{\gamma})\big)=&\sup_{\alpha\in\mathfrak{A}}E^\alpha_{x,\xi}\frac{(-v)_{(\xi_{\gamma})(\xi_{\gamma})}(x_{\gamma})}{\underline{\mathrm{B}}(x_{\gamma},\xi_{\gamma})}\cdot e^{-\phi_\gamma}\underline{\mathrm{B}}(x_{\gamma},\xi_{\gamma})\\
\le&H_1(\epsilon, T,n)+H_2(\epsilon, T,n),
\end{align*}
where
\begin{align*}
H_1(\epsilon, T,n)=&\sup_{\alpha\in\mathfrak{A}}E^\alpha_{x,\xi}\bigg|\frac{(-v)_{(\xi_{\gamma})(\xi_{\gamma})}(x_{\gamma})}{\underline{\mathrm{B}}(x_{\gamma},\xi_{\gamma})}-\frac{(-v)_{(\xi_{\gamma})(\xi_{\gamma})}(x_{\tau_1})}{\underline{\mathrm{B}}(x_{\tau_1},\xi_{\gamma})}\bigg|\underline{\mathrm{B}}(x_{\gamma},\xi_{\gamma}),\\
H_2(\epsilon, T,n)=&\sup_{\alpha\in\mathfrak{A}}E^\alpha_{x,\xi}\frac{|(-v)_{(\xi_{\gamma})(\xi_{\gamma})}(x_{\tau_1})|}{\underline{\mathrm{B}}(x_{\tau_1},\xi_{\gamma})}\underline{\mathrm{B}}(x_{\gamma},\xi_{\gamma}).
\end{align*}
Then we repeat a similar argument to that of estimating $J_1(\epsilon, T,n)$ and $J_2(\epsilon, T,n)$ in the proof of Theorem \ref{t2}. We should have
$$
\varlimsup_{n\uparrow\infty}\varlimsup_{T\uparrow\infty}\varlimsup_{\epsilon\downarrow0}H_1(\epsilon,T,n)\le4\overline{\B}(x,\xi)$$
and
$$
H_2(\epsilon,T,n)\le\sup_{y\in \partial D_\delta^\lambda,\zeta\in  \Rd\setminus\{0\}}\frac{|(-v)_{(\zeta)(\zeta)}(y)|}{\mathrm{B}_1(y,\zeta)}2\overline{\B}(x,\xi),
$$
which imply (\ref{350}). Combining (\ref{yy}) and (\ref{350}), we obtain
\begin{equation}\label{g2}
\varlimsup_{n\uparrow\infty}\varlimsup_{T\uparrow\infty}\varlimsup_{\epsilon\downarrow0}G_2(\epsilon,T,n)\le\bigg(\sup_{\substack{y\in \{\psi=\delta\}\\\zeta\in  \Rd\setminus\{0\}}}\frac{|(-v)_{(\zeta)(\zeta)}(y)|}{\mathrm{B}_1(y,\zeta)}+N\bigg)\overline{\B}(x,\xi).
\end{equation}
Similar to the last part in the proof of Theorem \ref{t2}, 
we have
\begin{equation*}
\varlimsup_{\delta\downarrow0}\sup_{\substack{x\in\{\psi=\delta\}\\\xi\in\Rd\setminus\{0\}}}\frac{|v_{(\xi)(\xi)}(x)|}{\mathrm B_1(x,\xi)}=\left\{
\begin{array}{ll}
0,&\mbox{ if }\psi_{(\zeta)}(y)\ne0;\\
\displaystyle\frac{|g_{(\zeta)(\zeta)}(y)|}{\lambda^2},&\mbox{ if }\psi_{(\zeta)}(y)=0.
\end{array}
\right.
\end{equation*}
It turns out that for each $x\in D$ and $\xi\in\Rd$,
\begin{align*}
(-v)_{(\xi)(\xi)}(x)\le& N\overline{\B}(x,\xi).
\end{align*}
Consequently, the proof is complete.

\end{proof}

For the proof of the second inequality in (\ref{e3}) and the existence and uniqueness result on (\ref{bellmanae}), see Proof of (2.13) and Proof of the existence and uniqueness of (2.14) in \cite{MR3047001}.

\section{Acknowledgements} The author wishes to express sincere gratitude towards his PhD advisor, Professor Nicolai V. Krylov, for illuminating suggestions and the financial support during the preparation of this paper. The author is also very grateful to Professor Hongjie Dong for inspiring discussions on fully nonlinear PDE theory.


\providecommand{\bysame}{\leavevmode\hbox to3em{\hrulefill}\thinspace}
\providecommand{\MR}{\relax\ifhmode\unskip\space\fi MR }
\providecommand{\MRhref}[2]{%
  \href{http://www.ams.org/mathscinet-getitem?mr=#1}{#2}
}
\providecommand{\href}[2]{#2}

\end{document}